\newtheorem{thm}{Theorem}[section]
\newtheorem{prop}[thm]{Proposition}
\newtheorem{cor}[thm]{Corollary}
\newtheorem{lem}[thm]{Lemma}
\newtheorem{conj}[thm]{Conjecture}
\theoremstyle{definition}
\newtheorem{dfn}[thm]{Definition}
\newtheorem{ex}[thm]{Example}
\newtheorem{prop and dfn}[thm]{Proposition \& Definition}
\theoremstyle{remark}
\newtheorem{rem}[thm]{Remark}
\newcommand{\id}{\mathrm{id}}
\newcommand{\rank}{\mathrm{rank}}
\newcommand{\Ker}{\mathrm{Ker}}
\renewcommand{\Im}{{\mathrm{Im}}}
\newcommand{\pos}{\mathrm{pos}}
\newcommand{\bfit}[1]{\textbf{\textit{#1}}}
\newcommand{\simwrt}[1]{\underset{#1}{\sim}}
\newcommand{\nsimwrt}[1]{\underset{#1}{\not\sim}}
\newcommand{\inp}{{\mathrm{in}}_{\bfit p}}
\newcommand{\Txpm}{\mathbb T[\bfit x^{\pm}]}
\newcommand{\Txpmfcn}{\mathbb T[\bfit x^{\pm}]_{\mathrm{fcn}}}
\newcommand{\Bxpm}{\mathbb B[\bfit x^{\pm}]}
\newcommand{\Bypm}{\mathbb B[\bfit y^{\pm}]}
\newcommand{\Bxpmfcn}{\mathbb B[\bfit x^{\pm}]_{\mathrm {fcn}}}
\newcommand{\Bxnpmfcn}{\mathbb B[x_1^{\pm}, \ldots, x_n^{\pm}]_{\mathrm {fcn}}}
\newcommand{\Bypmfcn}{\mathbb B[\bfit y^{\pm}]_{\mathrm {fcn}}}
\newcommand{\Bympmfcn}{\mathbb B[y_1^{\pm}, \ldots, y_m^{\pm}]_{\mathrm {fcn}}}
\newcommand{\Cp}{\mathcal C_{\bfit p}}
\newcommand{\CpB}{\mathcal C_{\bfit p, \mathbb B}}
\newcommand{\CX}{\mathcal C(X)}
\newcommand{\CY}{\mathcal C(Y)}
\newcommand{\CXp}{\mathcal C(X)_{\bfit p}}
\newcommand{\CXo}{\mathcal C(X)_{\mathbf 0}}
\newcommand{\CXpB}{\mathcal C(X)_{\bfit p, \mathbb B}}
\newcommand{\CXoB}{\mathcal C(X)_{\mathbf 0, \mathbb B}}
\newcommand{\ZApos}{\mathbb Z^A_{\pos} \cup \{ -\infty \}}
\newcommand{\ZBpos}{\mathbb Z^B_{\pos} \cup \{ -\infty \}}
\newcommand{\ZXpos}{\mathbb Z^{X(1)}_{\pos} \cup \{ -\infty \}}
\newcommand{\ZYpos}{\mathbb Z^{Y(1)}_{\pos} \cup \{ -\infty \}}
\newcommand{\Zpos}[1]{\mathbb Z^{#1}_{\pos} \cup \{ -\infty \}}
\renewcommand{\epsilon}{\varepsilon}
\renewcommand{\phi}{\varphi}
\newcommand{\matab}[2]{\begin{pmatrix} #1 & #2 \end{pmatrix}}
\newcommand{\matba}[2]{\begin{pmatrix} #1 \\ #2 \end{pmatrix}}
\newcommand{\matbb}[4]{\begin{pmatrix} #1 & #2 \\ #3 & #4 \end{pmatrix}}
\newcommand{\matac}[3]{\begin{pmatrix} #1 & #2 & #3 \end{pmatrix}}
\newcommand{\matca}[3]{\begin{pmatrix} #1 \\ #2 \\ #3 \end{pmatrix}}
\newcommand{\matcc}[9]{\begin{pmatrix} #1 & #2 & #3 \\ #4 & #5 & #6 \\ #7 & #8 & #9 \end{pmatrix}}
\newcommand{\matad}[4]{\begin{pmatrix} #1 & #2 & #3 & #4 \end{pmatrix}}
\begin{document}

\title{\ \quad Local theory of functions on tropical curves in $\mathbb R^n$
\keywords{max-plus algebra, tropical algebra, tropical curves, local theory}
\amssubj{Primary 14T10, Secondary 15A80, 16Y60.}
}
\author{Takaaki Ito}
\date{\hfill}
\maketitle

\begin{table}[h]
\centering
\begin{tabularx}{0.7\linewidth}{X}
\begin{center} \textbf{Abstract} \end{center}\\
{\small
We first develop the local theory of functions on $\mathbb R^n$ defined by tropical Laurent polynomials.
We study the structure of the semiring of functions, where two functions are identified when they coincide on a neighborhood of a fixed point.
We see that this semiring is closely related to the semiring of functions defined by Boolean Laurent polynomials.
Then we develop the local theory of functions on tropical curves.
We construct a contravariant functor from the category of 1-dimensional tropical fans to the category of certain homomorphisms of semirings.
As an application, we discuss about the smoothness of 1-dimensional tropical fans at the origin.
}
\end{tabularx}
\end{table}

\section{Introduction}
Recently, relations between tropical varieties and tropical polynomials are studied in several ways.
In \cite{GG}, J. Giansiracusa and N. Giansiracusa construct tropical hypersurfaces as schemes over idempotent semirings.
Maclagan and Rinc\'{o}n define tropical ideals in \cite{MR}, which are ideals in the semiring of tropical polynomials, and show that tropical ideals have many properties similar to classical ones.
In \cite{BE}, Bertram and Easton study congruences defined by subsets of the tropical affine space $\mathbb T^n$ and subsets defined by congruences on the semiring of tropical polynomials.

In this paper, we give another approach.
We focus on a neighborhood of a point.
Let $\mathbb T [x_1^{\pm}, \ldots, x_n^{\pm}]_{\mathrm{fcn}} = \Txpmfcn$ be the semiring which consists of functions on $\mathbb R^n$ defined by tropical Laurent polynomials.
Let $X$ be a $1$-dimensional tropical fan in $\mathbb R^n$ and $|X|$ be its support.
Let us consider the congruence
$$\mathcal C(X)_{\mathbf 0} = \left\{ (f, g)  \ \middle| \ 
\begin{aligned}
&\text{ there exists an open neighborhood $U$ of $\mathbf 0$ } \\
&\text{ such that }  f|_{U \cap |X|} = g|_{U \cap |X|}
\end{aligned} \right\}$$
on $\Txpmfcn$, where the topology on $\mathbb R^n$ is the Euclidean topology.
We study the relation between $X$ and $\Txpmfcn / \CXo$.
Since tropical curves are locally isomorphic to tropical fans, this is the local theory of tropical curves.

We first develop the local theory of functions on $\mathbb R^n$ defined by tropical Laurent polynomials.
Let $\bfit p \in \mathbb R^n$ be any vector.
Consider the congruence
$$\mathcal C_{\bfit p} = \left\{ (f, g)  \ \middle| \ 
\begin{aligned}
&\text{ there exists an open neighborhood $U$ of $\bfit p$ } \\
&\text{ such that }  f|_U = g|_U
\end{aligned} \right\}$$
on $\Txpmfcn$.
The quotient semiring $\Txpmfcn / \mathcal C_{\bfit p}$ has a distinguished structure, that is, $\Txpmfcn / \mathcal C_{\bfit p}$ is isomorphic to the $\mathbb T$-extension (see Definition \ref{Textdef}) of $\Bxpmfcn$, where $\mathbb B = \{ -\infty, 0 \}$ is the Boolean semifield.
By using properties of $\mathbb T$-extensions, we obtain our first main theorem.

\begin{thm}
\label{introthm1}
\begin{enumerate}[$(1)$]
\item Any congruence $\mathcal C$ on $\Txpmfcn$ including $\mathcal C_{\bfit p}$ satisfies just one of the following conditions:
\begin{enumerate}[$(a)$]
\item if $(f, g) \in \mathcal C$, then $f(\bfit p) = g(\bfit p)$, and
\item $(f, g) \in \mathcal C$ for any $f, g \in \Txpmfcn \smallsetminus \{ -\infty \}$.
\end{enumerate}
\item If $\mathcal C$ is a congruence on $\Txpmfcn$ including $\mathcal C_{\bfit p}$ and satisfying the condition (a), then there exists a congruence $\mathcal C'$ on $\Bxpmfcn$ such that
$\Txpmfcn / \mathcal C $ is isomorphic to the $\mathbb T$-extension of $\Bxpmfcn / \mathcal C'$.
\end{enumerate}
\end{thm}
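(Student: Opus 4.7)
The plan is to reduce the theorem to the analogous statement about congruences on the $\mathbb T$-extension, using the isomorphism $\Txpmfcn / \mathcal C_{\bfit p} \cong \mathbb T$-extension of $\Bxpmfcn$ noted above. By the standard correspondence for quotient semirings, congruences $\mathcal C$ on $\Txpmfcn$ containing $\mathcal C_{\bfit p}$ biject with congruences $\tilde{\mathcal C}$ on the $\mathbb T$-extension of $\Bxpmfcn$, and evaluation $f \mapsto f(\bfit p)$ corresponds to reading off the $\mathbb T$-component. Under this translation, condition (a) says that $\tilde{\mathcal C}$ preserves the $\mathbb T$-component, and (b) says that $\tilde{\mathcal C}$ identifies every two nonzero elements, so (1) and (2) become statements about congruences on the $\mathbb T$-extension.

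For part (1), I would work directly in $\Txpmfcn$. Assume (a) fails: pick $(f, g) \in \mathcal C$ with $f(\bfit p) \neq g(\bfit p)$; the goal is to derive (b). In the edge case $g = -\infty$, the fact that $f \not\equiv -\infty$ has a finite value at $\bfit p$ and that $f + t \sim -\infty$ for every $t \in \mathbb R$ (by tropical multiplication) lets me, for any $h \neq -\infty$ and $t$ large enough that $(f + t)(\bfit p) > h(\bfit p)$, invoke continuity to get $(f + t) \oplus h = f + t$ near $\bfit p$ and then chain $(f + t) \oplus h \sim_{\mathcal C} (-\infty) \oplus h = h$, yielding $f + t \sim h$; varying $h$ shows that all non-$(-\infty)$ elements are pairwise congruent. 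In the main case where $f(\bfit p) > g(\bfit p)$ are both finite, choose a constant $c \in (g(\bfit p), f(\bfit p))$; near $\bfit p$ one has $f \oplus c = f$ and $g \oplus c = c$, so the chain $f \sim_{\mathcal C_{\bfit p}} f \oplus c \sim_{\mathcal C} g \oplus c \sim_{\mathcal C_{\bfit p}} c$ gives $(f, c) \in \mathcal C$. Varying $c$ and shifting by $d \in \mathbb R$ via tropical multiplication then produces $(c_1, c_2) \in \mathcal C$ for all $c_1, c_2 \in \mathbb R$ (by transitivity across overlapping shifted intervals). Multiplying by any $h \neq -\infty$ yields $h \sim h + t$ for every $t \in \mathbb R$. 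Finally, for $h, h' \neq -\infty$, picking $t$ large enough that $(h + t)(\bfit p) > h'(\bfit p)$ gives $(h + t) \oplus h' = h + t$ near $\bfit p$, so $h \oplus h' \sim_{\mathcal C} (h + t) \oplus h' \sim_{\mathcal C_{\bfit p}} h + t \sim_{\mathcal C} h$, and symmetrically $h \oplus h' \sim h'$, whence $h \sim h'$.

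For part (2), suppose (a) holds, so $\tilde{\mathcal C}$ preserves the $\mathbb T$-component. Define a relation $\mathcal C'$ on $\Bxpmfcn$ by $(r, s) \in \mathcal C'$ iff $((0, r), (0, s)) \in \tilde{\mathcal C}$, where $0$ denotes the multiplicative identity of $\mathbb T$. Since $(a, 1)$ is a unit in the $\mathbb T$-extension, this is equivalent to $((a, r), (a, s)) \in \tilde{\mathcal C}$ for some (equivalently every) $a \in \mathbb R$, and a direct check using this equivalent formulation verifies that $\mathcal C'$ is a congruence. The map sending $(a, [r]_{\mathcal C'})$ to the class of $(a, r)$ is then well-defined by this equivalence, a homomorphism by the operations in the $\mathbb T$-extension, surjective by construction, and injective because preservation of the $\mathbb T$-component in (a) forces two equivalent classes to share their $\mathbb T$-components, reducing injectivity to the definition of $\mathcal C'$.

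The main obstacle I expect is the propagation argument in (1): carefully bootstrapping from a single violation $(f, g)$ of (a) to the collapse of all pairs of non-$(-\infty)$ functions, by interleaving tropical multiplication by units, the $\oplus c$ trick separating $f$ from $g$ along a constant slice, and the identifications already present in $\mathcal C_{\bfit p}$. Once the dichotomy is established, part (2) is structural bookkeeping via the universal properties of the $\mathbb T$-extension.
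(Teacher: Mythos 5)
Your proposal is correct and follows essentially the same route as the paper: the dichotomy in (1) is established by the same propagation argument (one value-discrepancy at $\bfit p$ collapses an interval of constants, then multiplication by units and transitivity over overlapping intervals collapse all constants, then $\oplus$ with a dominating element collapses all nonzero functions), which the paper packages abstractly as Lemma \ref{Textcong} for $\mathbb T$-extensions, while you run it directly in $\Txpmfcn$ using constant functions, continuity, and $\mathcal C_{\bfit p}$. Part (2) is exactly the paper's correspondence between partial congruences on $R \times_e \mathbb T$ and congruences on $R$ satisfying $f \nsim 0$ (Lemmas \ref{Textcongr} and \ref{Textquot}), transported through the isomorphism $\Txpmfcn/\mathcal C_{\bfit p} \cong \Bxpmfcn \times_e \mathbb T$.
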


By applying Theorem \ref{introthm1} to $\CXo$, it is shown that there exists a congruence $\CXoB$ on $\Bxpmfcn$ such that the semiring $\Txpmfcn / \CXo$ is isomorphic to the $\mathbb T$-extension of $\Bxpmfcn / \CXoB$.
Moreover, we will show that $\CXoB$ coincides with
$$\CX := \left\{ (f, g) \in \left( \Bxpmfcn \right)^2 \ \middle| \ f|_{|X|} = g|_{|X|} \right\}.$$
One may expect that we can reconstruct $X$ from the semiring $\Bxpmfcn / \CX$.
However, it is impossible in general (see Remark \ref{rem:reconst}).
Instead of that, we construct a semiring homomorphism $\phi_X$ associated with $X$, and show that we can reconstruct $X$ from $\phi_X$.
The map $\phi_X$ is defined as follows:
For two sets $A,B$, we denote by $B^A$ the set of all maps from $A$ to $B$.
Let
$$\mathbb Z^{X(1)}_{\mathrm{pos}} := \left\{ F \in \mathbb Z^{X(1)} \ \middle| \ \sum_{\rho \in X(1)} F(\rho) \geq 0 \right\},$$
where $X(1)$ is the set of rays in $X$.
Then $\ZXpos$ is a semiring, where the addition and the multiplication are the pointwise max-plus operation.
For any ray $\rho \in X(1)$, let $w_{\rho}$ be its weight and $\bfit d_{\rho}$ be its primitive direction vector.
The homomorphism $\phi_X$ is defined as
$$\phi_X : \Bxpmfcn \to \ZXpos, \quad f \mapsto \phi_X(f),$$
where $\phi_X(f)$ is defined as $\phi_X(f)(\rho) = w_\rho f(\bfit d_{\rho})$, where $w_{\rho}$ is the weight of $\rho$ and $\bfit d_{\rho}$ is the primitive direction vector of $\rho$.
We also see that $\Ker (\phi_X) = \CX$, and then the semiring $\Bxpmfcn / \CX$ is isomorphic to a subsemiring of $\ZXpos$.

We make the correspondence $X \mapsto \phi_X$ into a contravariant functor.
To do this, we need to define the morphisms of semiring homomorphisms.
Let $A, B$ be finite sets.
Let $\phi : \Bxnpmfcn \to \ZApos$ and $\psi : \Bympmfcn \to \ZBpos$ be semiring homomorphisms.
Then a morphism from $\phi$ to $\psi$ is any semiring homomorphism from $\Im(\phi)$ to $\Im(\psi)$.
The reason why we use such definition is described in Section \ref{subsec:MorDef}.
Let $X,Y$ be $1$-dimensional tropical fans.
For a morphism $\mu : X \to Y$, we construct a morphism $\mu^* : \phi_Y \to \phi_X$.
Thus the correspondence $X \mapsto \phi_X, \mu \mapsto \mu^*$ define a contravariant functor.
It is shown that the functor is faithful.
We also show that a morphism $\nu : \phi_Y \to \phi_X$ is of the form $\mu^*$ for some $\mu : X \to Y$ if and only if $\nu$ has a certain property, called \textit{geometric}.
Thus we have our second main theorem.

\begin{thm}
The correspondence $X \mapsto \phi_X$ and $\mu \mapsto \mu^*$ gives a fully faithful contravariant functor from the category of 1-dimensional tropical fans to the category of realizable homomorphisms whose morphisms are geometric morphisms,
\end{thm}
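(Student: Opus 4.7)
The plan is to verify three things in sequence: (i) $X \mapsto \phi_X$, $\mu \mapsto \mu^*$ defines a contravariant functor, (ii) it is faithful on morphisms, and (iii) its image on morphism sets is exactly the geometric morphisms.

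For (i), the central identity is
\[
\phi_X(f \circ \mu)(\rho) \;=\; w_\rho \, f(\mu(\bfit d_\rho))
\]
for $f \in \Bypmfcn$ and $\rho \in X(1)$. This formula pins down $\mu^*$ on each generator $\phi_Y(f)$ of $\Im(\phi_Y)$, so $\mu^*$ is forced to be the unique semiring homomorphism $\Im(\phi_Y) \to \Im(\phi_X)$ compatible with pullback of functions. From this description, $\id_X^* = \id_{\Im(\phi_X)}$ is immediate, and the contravariance $(\mu_1 \circ \mu_2)^* = \mu_2^* \circ \mu_1^*$ follows by evaluating both sides on generators using $(f \circ \mu_1) \circ \mu_2 = f \circ (\mu_1 \circ \mu_2)$.

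For (ii), suppose $\mu_1^* = \mu_2^*$ for two morphisms $\mu_1, \mu_2 : X \to Y$, where $Y \subseteq \mathbb R^m$. Apply this equality to the coordinate functions $y_1, \ldots, y_m$. On the generators $\phi_Y(y_j)$ and at each $\rho \in X(1)$, the equality reads
\[
w_\rho \, y_j(\mu_1(\bfit d_\rho)) \;=\; w_\rho \, y_j(\mu_2(\bfit d_\rho)).
\]
Since $w_\rho$ is a positive integer, this forces $\mu_1(\bfit d_\rho) = \mu_2(\bfit d_\rho)$ for every $\rho$. Because a morphism of $1$-dimensional tropical fans is determined by its values on primitive direction vectors, we conclude $\mu_1 = \mu_2$.

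For (iii), given a geometric $\nu : \phi_Y \to \phi_X$, I would reconstruct $\mu$ ray-by-ray: for each $\rho \in X(1)$, extract a candidate image point $\bfit q_\rho \in \mathbb R^m$ from the values $\nu(\phi_Y(y_j))(\rho)/w_\rho$, and then use the geometric condition to ensure $\bfit q_\rho$ lies on a ray of $Y$ (or equals $\mathbf 0$) with a compatible integer multiplicity, together with the balancing condition at $\mathbf 0$. The resulting $\mu$ satisfies $\mu^* = \nu$ on the generators $\phi_Y(y_j)$, and hence on all of $\Im(\phi_Y)$ by the semiring structure. The main obstacle is precisely this step: proving that the geometric condition is strong enough to guarantee that an a priori arbitrary semiring homomorphism on images actually encodes a tropical-geometric morphism — in particular, that the $\bfit q_\rho$ extracted from $\nu$ are not ``fractional'' and that balancing at the origin holds. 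This is where the full force of the definition of geometric morphism has to be unpacked and matched against the defining axioms of a morphism of tropical fans.
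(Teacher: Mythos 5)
Parts (i) and (ii) of your proposal match the paper's argument (Propositions on functoriality and faithfulness): functoriality is checked on generators via $(f\circ\mu_2)\circ\mu_1 = f\circ(\mu_2\circ\mu_1)$, and faithfulness follows by evaluating $\mu_1^*=\mu_2^*$ on the coordinate generators and cancelling the positive weight $w_\rho$. These steps are correct.

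The gap is in (iii), fullness, which you explicitly leave as an ``obstacle'' rather than prove. Two issues. First, your ray-by-ray reconstruction does not obviously produce a morphism of tropical fans: by definition such a morphism must be the restriction of a \emph{single} integer linear map $\mathbb R^n\to\mathbb R^m$, and assembling independently chosen points $\bfit q_\rho$ on rays of $Y$ does not yield one. The missing idea, which the paper supplies, is structural rather than geometric: a semiring homomorphism $\nu:\Im(\phi_Y)\to\Im(\phi_X)$必carries units to units, and the unit group of $\Im(\phi_X)$ is exactly the multiplicative subgroup of $\mathbb Z^{X(1)}_0$ generated by $F_i=\phi_X([x_i])$ (this is the content of the lemmas on $R\cap\mathbb Z^A_0$ being the unit group of the subsemiring generated by a subgroup of $\mathbb Z^A_0$). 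Hence each $\nu(G_j)$ is automatically an integer monomial in the $F_i$, i.e.\ $\nu(\bfit G)=T\bfit F$ for an integer matrix $T$; this kills the ``fractional'' worry outright and hands you the global linear map. The geometric hypothesis is then used only for the remaining point: for each ray $\rho$ with primitive direction $\bfit d$, $T\bfit F(\rho)=\nu(\bfit G)(\rho)=t\,\bfit G(\rho')$ lies on a ray $\rho'$ of $Y$, so $T\bfit d\in|Y|$ and $T$ restricts to a map $|X|\to|Y|$. Second, your concern about ``balancing at the origin'' is a red herring: a morphism of $1$-dimensional tropical fans in this paper is merely a map of supports induced by an integer matrix, with no balancing compatibility imposed, so nothing of that kind needs to be verified. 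As written, the proposal asserts fullness without establishing it, so the theorem is not proved.
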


where the definition of realizable homomorphism is in Section \ref{subsec:RealHom}.

As an application, we discuss the smoothness of $1$-dimensional tropical fans at the origin.
The following is our third main theorem.

\begin{thm}
Let $X$ be a 1-dimensional tropical fan in $\mathbb R^n$.
Then, $X$ is smooth at $\mathbf 0$ if and only if the map $\phi_X$ is surjective.
\end{thm}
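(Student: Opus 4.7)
The plan is to analyze $\phi_X(f)(\rho) = w_\rho f(\bfit d_\rho)$ directly, using that each non-trivial $f \in \Bxpmfcn$ has the form $\max_i \langle \bfit a_i, \bfit x \rangle$ with $\bfit a_i \in \mathbb Z^n$, and to handle the two implications separately.

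For ``surjective $\Rightarrow$ smooth'', I would first apply surjectivity to the indicator $F = \delta_\rho \in \ZXpos$ (which has sum $1$) to obtain $w_\rho f(\bfit d_\rho) = 1$, forcing $w_\rho = 1$ for every ray $\rho$. Then, for each pair $\rho \neq \sigma$ in $X(1)$, I would apply surjectivity to the function $F_{\rho,\sigma}$ taking values $1$ at $\rho$, $-1$ at $\sigma$, and $0$ elsewhere (sum zero). The summand of the preimage $\max_i \langle \bfit a_i, \bfit x \rangle$ realizing the maximum at $\bfit d_\rho$ yields an integer linear form $L_{\rho,\sigma}$ with $L_{\rho,\sigma}(\bfit d_\rho) = 1$, $L_{\rho,\sigma}(\bfit d_\sigma) \leq -1$, and $L_{\rho,\sigma}(\bfit d_\tau) \leq 0$ for $\tau \notin \{\rho, \sigma\}$. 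Applying $L_{\rho,\sigma}$ to the balancing identity $\sum_\tau \bfit d_\tau = \mathbf 0$ and using integrality upgrades these inequalities to equalities $L_{\rho,\sigma}(\bfit d_\sigma) = -1$ and $L_{\rho,\sigma}(\bfit d_\tau) = 0$ for $\tau \notin \{\rho, \sigma\}$. Fixing a reference ray $\rho_0 \in X(1)$, the family $\{L_{\sigma, \rho_0}\}_{\sigma \neq \rho_0}$ assembles into a surjective $\mathbb Z$-linear map $\pi : \mathbb Z^n \twoheadrightarrow \mathbb Z^{X(1) \setminus \{\rho_0\}}$ sending $\bfit d_\tau \mapsto \bfit e_\tau$ for $\tau \neq \rho_0$ and $\bfit d_{\rho_0} \mapsto -\sum_\tau \bfit e_\tau$. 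Its restriction to $\mathbb Z\{\bfit d_\tau\}$ is an isomorphism onto the target, which exhibits $\mathbb Z\{\bfit d_\tau\}$ as a direct summand (hence a saturated sublattice) of $\mathbb Z^n$ and identifies the configuration of primitive vectors with the standard tropical line, giving the smoothness of $X$ at $\mathbf 0$.

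For ``smooth $\Rightarrow$ surjective'', I would use the standard model in which $X$ has primitive vectors $\bfit e_1, \ldots, \bfit e_{k-1}, -(\bfit e_1 + \cdots + \bfit e_{k-1})$ with all weights $1$ inside a saturated sublattice of $\mathbb Z^n$. For any $F = (F_0, F_1, \ldots, F_{k-1})$ with $S := \sum_j F_j \geq 0$, I would take
\[
L_1(\bfit x) = \sum_{i=1}^{k-1} F_i x_i, \qquad L_0(\bfit x) = (F_1 - S) x_1 + \sum_{i=2}^{k-1} F_i x_i,
\]
and set $f = \max(L_0, L_1) \in \Bxpmfcn$. Direct evaluation at each primitive vector, using $S \geq 0$, then verifies $\phi_X(f) = F$.

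The main obstacle is the step in the ``only if'' direction that upgrades the inequalities $L_{\rho,\sigma}(\bfit d_\tau) \leq 0$ to equalities via balancing plus integrality, and subsequently identifies $\mathbb Z\{\bfit d_\tau\}$ as a saturated sublattice of $\mathbb Z^n$; the argument hinges on the \emph{integer} (rather than merely rational) nature of the coefficients of Boolean Laurent polynomials, without which the rigid combinatorial structure of the standard tropical line could not be extracted. The ``if'' direction, by contrast, reduces to an explicit construction within the standard model.
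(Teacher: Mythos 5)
Your proposal is correct, but the ``surjective $\Rightarrow$ smooth'' direction takes a genuinely different route from the paper. The paper runs this direction through its categorical machinery: it picks a bijection $\sigma : L_{n,r}(1) \to X(1)$, observes that $\sigma^*$ and $(\sigma^{-1})^*$ are geometric, invokes Proposition \ref{full} and faithfulness (Proposition \ref{faithful}) to produce mutually inverse morphisms $L_{n,r} \to X$ and $X \to L_{n,r}$, and then uses Lemma \ref{lem:smooth equiv} (whose proof rests on the Smith normal form appendix) to conclude that the morphism comes from a matrix in $GL(n,\mathbb Z)$; the weight condition is handled exactly as in your first step, by divisibility. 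Your argument instead extracts the $GL(n,\mathbb Z)$-change of coordinates by hand: preimages of the degree-zero functions $F_{\rho,\sigma}$ yield integer linear forms whose values on the $\bfit d_\tau$ are pinned down by balancing plus integrality, and these forms assemble into a retraction $\mathbb Z^n \twoheadrightarrow \mathbb Z^{X(1)\smallsetminus\{\rho_0\}}$ splitting off the lattice spanned by the direction vectors. This is more elementary and self-contained --- it bypasses the functor, the notion of geometric morphism, and the integer-matrix appendix --- at the cost of essentially re-proving, in this special case, the content of Proposition \ref{full} and Lemma \ref{lem:smooth equiv}; the paper's route buys reusability of those general statements. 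For the converse direction your explicit $f = \max(L_0, L_1)$ is the same construction as Example \ref{exL} ($G = G_0 \oplus G_1$), transported to $X$ by the coordinate change; the only point you elide is the paper's Lemma \ref{lem:primitive}, i.e., that a matrix in $GL(n,\mathbb Z)$ sends primitive vectors to primitive vectors, which is what guarantees that the rays, primitive directions, and hence the evaluation maps match up under the identification of $X$ with the standard model.
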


This paper is constructed as follows.
In Section 2, we recall some basic definitions and facts about semirings, tropical algebra and 1-dimensional tropical fans.
We also introduce the $\mathbb T$-extensions of semirings.
In Section 3, we develop the local theory of functions on $\mathbb R^n$ defined by tropical Laurent polynomials, and show our first main theorem.
In Section 4, we apply $\CXp$ to the first main theorem.
In Section 5, we define the weighted evaluation map $\phi_X$ of a 1-dimensional tropical fan $X$, and study its properties.
In Section 6, we make the correspondence $X \mapsto \phi_X$ into a contravariant functor, and study its properties.
In Section 7, we discuss the smoothness of 1-dimensional tropical fans at the origin.

The author is grateful to J. Song for proofreading this manuscript.

\section{Preliminaries}

\subsection{Semirings}

A \textit{semiring} $(R, +, \cdot, 0, 1)$ is a set $R$ equipped with two binary operations $+$ (addition) and $\cdot$ (multiplication) and two fixed elements $0,1$ satisfying the following conditions:
\begin{enumerate}[(1)]
\item $(R,+,0)$ is a commutative monoid,
\item $(R,\cdot,1)$ is a monoid,
\item multiplication is distributive over addition, and
\item $0 \cdot a = 0$ for any $a \in R$.
\end{enumerate}
We also denote this semiring by $(R, +, \cdot)$ or $R$ if it causes no confusions.
Note that the condition $0 \cdot a = 0$ is necessary because it does not follow from the other axioms.
If a semiring $R$ satisfies $a \cdot b = b \cdot a$ for any $a,b \in R$, we say $R$ is \textit{commutative}.
An element $a$ in a semiring $R$ is \textit{invertible} or a \textit{unit} if there exists an element $b \in R$ such that $a \cdot b = b \cdot a =1$.
The set of all invertible elements of a semiring $R$ forms a group with respect to multiplication, which we call the \textit{unit group} of $R$.
A commutative semiring $R$ is called \textit{semifield} if its unit group is $R \smallsetminus \{ 0 \}$.
A subset $S$ of a semiring $R$ is a \textit{subsemiring} of $R$ if $S$ is a semiring with respect to the operations on $R$.
A \textit{subsemifield} is similarly defined.
A semiring $R$ is \textit{idempotent} if $a+a=a$ for any $a \in R$.

Let $A$ be a subset of $R$.
The subsemiring of $R$ \textit{generated by} $A$ is the minimum subsemiring of $R$ including $A$.
It consists of the elements of $R$ of the form
$$a_{11} a_{12} \cdots a_{1k_1} + a_{21} a_{22} \cdots a_{2k_2} + \cdots + a_{l1} a_{l2} \cdots a_{lk_l},$$
for some $a_{11}, \ldots, a_{lk_l} \in A$.
If a subsemiring $S$ of $R$ is generated by $A$, the set $A$ is called a \textit{generating set} of $S$.

A \textit{congruence} $\mathcal C$ on a semiring $R$ is a subset of $R \times R$ such that, for any $a,b,c,d \in R$,
\begin{enumerate}[(C1)]
\item $(a, a) \in {\mathcal C}$,
\item if $(a,b) \in {\mathcal C}$, then $(b,a) \in {\mathcal C}$,
\item if $(a,b), (b,c) \in {\mathcal C}$, then $(a,c) \in {\mathcal C}$, and
\item if $(a,b), (c,d) \in {\mathcal C}$, then $(a+c,b+d), (a \cdot c, b \cdot d) \in {\mathcal C}$.
\end{enumerate}

We often write $a \underset{\mathcal C}{\sim} b$ instead of $(a,b) \in \mathcal C$.
The conditions (C1) -- (C3) mean that a congruence is an equivalence relation.

Let ${\mathcal C}$ be a congruence on a semiring $R$.
For any element $a \in R$, we denote $[a]$ the equivalence class of $a$ in $R/{\mathcal C}$.
For equivalence classes $[a],[b] \in R/{\mathcal C}$, we define
$$[a]+[b] = [a+b], \qquad [a] \cdot [b] = [a \cdot b].$$
These operations are well-defined, and $(R/{\mathcal C}, +, \cdot, [0], [1])$ forms a semiring, which is called \textit{quotient semiring} of $R$ by $\mathcal C$.

A \textit{homomorphism} $\varphi : R_1 \to R_2$ of semirings is a map such that, for any $a,b \in R_1$,
\begin{enumerate}[(1)]
\item $\varphi (a+b) = \varphi(a) + \varphi(b)$,
\item $\varphi(a \cdot b) = \varphi(a) \cdot \varphi(b)$,
\item $\varphi(0) = 0$, and
\item $\varphi(1) = 1$.
\end{enumerate}
A homomorphism $\varphi : R_1 \to R_2$ is an \textit{isomorphism} if and only if there exists a homomorphism $\psi : R_2 \to R_1$ such that $\psi \circ \varphi = \id_{R_1}$ and $\varphi \circ \psi = \id_{R_2}$.

Let $\varphi : R_1 \to R_2$ be a homomorphism of semirings. 
The \textit{kernel} $\textrm{Ker}(\varphi)$ of $\varphi$ is the congruence $\{ (a,b) \in R_1 \times R_1 \ | \ \varphi(a)=\varphi(b) \}$ on $R_1$.
The \textit{image} $\textrm{Im}(\varphi)$ of $\varphi$ is the subsemiring $\{ \varphi(a) \in R_2 \ | \ a \in R_1 \}$ of $R_2$.
The homomorphism $\varphi$ naturally induces an isomorphism $R_1/\Ker (\varphi) \to \Im (\varphi)$.

Let $\varphi : R_1 \to R_2$ be a homomorphism of semirings and $\mathcal C$ be a congruence on $R_2$.
Then the inverse image $\varphi^{-1}(\mathcal C) := \{ (a, b) \in R_1\times R_1 \ | \ (\varphi(a), \varphi(b)) \in \mathcal C \}$ is a congruence on $R_1$.
Moreover, $\varphi^{-1}(\mathcal C)$ is the kernel of the composition of the homomorphisms $R_1 \to R_2 \to R_2/\mathcal C$.
If $\varphi$ is surjective, the isomorphism $R_1 / \varphi^{-1}(\mathcal C) \to R_2 / \mathcal C$ is induced.

Let $\mathcal C_1, \mathcal C_2$ be congruences on a semiring $R$ such that $\mathcal C_1 \subset \mathcal C_2$.
We define the congruence $\mathcal C_2 / \mathcal C_1$ on $R/ \mathcal C_1$ as
$$\mathcal C_2 / \mathcal C_1 = \{ ([a], [b]) \in (R/ \mathcal C_1)^2 \ | \ (a,b) \in \mathcal C_2 \}.$$
There is a canonical isomorphism $R/\mathcal C_2 \to (R/\mathcal C_1) / (\mathcal C_2 / \mathcal C_1)$.

\subsection{Tropical algebra}

For $a, b \in \mathbb T := \mathbb R \cup \{ -\infty \}$, we define
$$a \oplus b = \max \{ a, b \}, \qquad a \odot b = a+b,$$
where, $a + (-\infty) = (-\infty) + a = -\infty$ for any $a \in \mathbb T$.
Then $\mathbb T = (\mathbb T, \oplus, \odot, -\infty, 0)$ forms an idempotent semifield, which we call the \textit{tropical semifield}.
A \textit{tropical Laurent monomial} of $n$ variables $x_1, \ldots, x_n$ is a formal product of the form $a \odot x_1^{u_1} \odot \cdots \odot x_n^{u_n} = a \odot {\bfit x}^{\bfit u}$, where $a \in \mathbb T$ and $\bfit u = (u_1, \ldots, u_n) \in \mathbb Z^n$.
A \textit{tropical Laurent polynomial} of $x_1, \ldots, x_n$ is a formal sum of the form
$$\bigoplus_{\bfit u \in \mathbb Z^n} a_{\bfit u} \odot \bfit x^{\bfit u},$$
where $a_{\bfit u} \in \mathbb T$ and $a_{\bfit u} = 0$ except for finitely many $\bfit u$.
We denote by $\mathbb T[\bfit x^{\pm}] = \mathbb T[x_1^{\pm}, \ldots, x_n^{\pm}]$ the set of all tropical Laurent polynomials of $x_1, \ldots, x_n$.
The addition and multiplication of tropical Laurent polynomials are naturally defined, and $\mathbb T[\bfit x^{\pm}]$ forms a commutative idempotent semiring.

\begin{ex}
For a tropical Laurent polynomial $P \in \Txpm$ and a vector $\bfit a = ( a_1, \ldots, a_n)$ in $\mathbb R^n$, $P(a_1 \odot x_1, \ldots, a_n \odot x_n)$ is also a tropical Laurent polynomial, which we denote by $P(\bfit a + \bfit x)$.
\end{ex}

For a tropical Laurent polynomial $P = \bigoplus_{i=1}^k a_i \odot \bfit x^{\bfit u_i}$ and a vector $\bfit p$, the \textit{value} of $P$ at $\bfit p$ is
$$P(\bfit p) := \bigoplus_{i=1}^k a_i \odot \bfit p^{\bfit u_i} = \max \{ a_i + \bfit u_i \cdot \bfit p \}_{i=1}^k,$$
where $\cdot$ is the standard inner product.

\begin{dfn}
The relation
$$\left\{ (P, Q) \in \left( \mathbb T [ \bfit x^{\pm} ] \right)^2 \ \middle| \ P(\bfit p) = Q(\bfit p) \text{ for any } \bfit p \in \mathbb R^n \right\}$$
on $\mathbb T [ \bfit x^{\pm} ]$ is a congruence.
The \textit{tropical Laurent polynomial function semiring} $\Txpmfcn$ is the quotient semiring of $\mathbb T [ \bfit x^{\pm} ]$ by the above congruence.
The equivalence class of $P$ is denoted by $[P]$.
\end{dfn}

The elements of $\Txpmfcn$ can be regarded as functions on $\mathbb R^n$.
We call the class $[P]$ the function defined by $P$.

The subsemifield $\mathbb B = \{ -\infty, 0 \}$ of $\mathbb T$ is called the \textit{Boolean semifield}.
We define the semirings $\mathbb B [\bfit x^{\pm}]$ and $\Bxpmfcn$ in the same way to the case of $\mathbb T$.

\begin{ex}
Let $P = \bfit x^{\bfit u} = x_1^{u_1} \odot \cdots \odot x_n^{u_n}$ be a Boolean Laurent monomial.
For any vector $\bfit p = (p_1, \ldots, p_n) \in \mathbb R^n$, $P(\bfit p) = \bfit u \cdot \bfit p = u_1p_1 + \cdots u_np_n$.
Thus the function $[P]$ is a linear form on $\mathbb R^n$ with integer coefficient.
Conversely, any linear form on $\mathbb R^n$ with integer coefficient is the function defined by a Boolean Laurent monomial of $n$ variables.
\end{ex}

\begin{lem}
\label{Bfcn}
Let $f \in \Bxpmfcn$ be a function, $t \geq 0$ be a nonnegative real number, and $\bfit p \in \mathbb R^n$ be a vector.
Then $f(t \bfit p) = t f(\bfit p)$, where the multiplication is the standard one.
\end{lem}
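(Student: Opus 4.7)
The plan is to pick a Boolean Laurent polynomial representative and reduce the statement to the fact that multiplication by a nonnegative scalar commutes with $\max$ on $\mathbb R$. The payoff of working over $\mathbb B=\{-\infty,0\}$ rather than $\mathbb T$ is that every nonvanishing term of a Boolean Laurent polynomial has coefficient $0$, so there are no additive shifts to worry about: a Boolean Laurent polynomial is, up to the $-\infty$ terms, a pure maximum of linear forms with integer coefficients.

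More concretely, I would first choose $P\in\mathbb B[\bfit x^{\pm}]$ with $f=[P]$. Collecting exponents with coefficient $0$ (and discarding the ones with coefficient $-\infty$, which do not contribute to $\oplus$), write
$$P \;=\; \bigoplus_{\bfit u\in S} \bfit x^{\bfit u}$$
for some finite $S\subset\mathbb Z^n$. If $S=\emptyset$ then $P=-\infty$ and $f$ is identically $-\infty$, in which case both sides of the claimed identity are $-\infty$ under the standard convention $t\cdot(-\infty)=-\infty$ for $t\geq 0$, and the assertion is trivial; so I may assume $S$ is nonempty.

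The main computation is then one line. By the definition of the evaluation map,
$$f(t\bfit p) \;=\; P(t\bfit p) \;=\; \max_{\bfit u\in S}\bigl(\bfit u\cdot(t\bfit p)\bigr) \;=\; \max_{\bfit u\in S} t\,(\bfit u\cdot\bfit p).$$
Because $t\geq 0$, multiplication by $t$ is an order-preserving self-map of $\mathbb R$ (with $0\cdot a=0$ for all $a\in\mathbb R$), and hence commutes with taking the maximum over a finite set. Therefore
$$\max_{\bfit u\in S} t\,(\bfit u\cdot\bfit p) \;=\; t\,\max_{\bfit u\in S}(\bfit u\cdot\bfit p) \;=\; t\,P(\bfit p) \;=\; t\,f(\bfit p),$$
which is the desired equality.

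There is no real obstacle here; the only point requiring any care is the separate treatment of the degenerate cases $S=\emptyset$ and $t=0$, which are handled by the conventions above. The substantive content of the lemma is simply the positive homogeneity of a finite maximum of integer linear forms, which is exactly what functions in $\Bxpmfcn$ look like.
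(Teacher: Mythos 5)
Your argument is correct and is essentially the paper's own proof: both write $f$ as a finite maximum of integer linear forms $\bfit u\cdot\bfit p$ and use that multiplication by $t\geq 0$ commutes with $\max$. The only difference is that you explicitly treat the degenerate cases $P=-\infty$ and $t=0$, which the paper leaves implicit.
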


\begin{proof}
Let $f = \left[ \bigoplus_{i=1}^n \bfit x^{\bfit u_i} \right]$.
Then
$f(t \bfit p) = \max_i \{ \bfit u_i \cdot (t \bfit p) \} = t \max_i \{ \bfit u_i \cdot \bfit p \} = t f(\bfit p)$.
\end{proof}

\begin{lem}
Let $R$ be an idempotent semiring.
Fix invertible elements $a_1, \ldots, a_n \in R$.
Then there exists a unique homomorphism $\phi : \Bxpm = \mathbb B[x_1^{\pm}, \ldots, x_n^{\pm}] \to R$ such that $\phi(x_i) = a_i$ for any $i$. 
\end{lem}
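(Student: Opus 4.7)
The plan is to exploit the simple shape of a Boolean Laurent polynomial: since $\mathbb B = \{-\infty, 0\}$, any element $P \in \Bxpm$ is determined by its (finite) support $S_P \subset \mathbb Z^n$, namely $P = \bigoplus_{\bfit u \in S_P} \bfit x^{\bfit u}$. Under this identification, addition in $\Bxpm$ corresponds to union of supports and multiplication to Minkowski sum, with $0 \leftrightarrow \emptyset$ and $1 \leftrightarrow \{\bfit 0\}$. I will write $\bfit a^{\bfit u}$ for $a_1^{u_1} \odot \cdots \odot a_n^{u_n} \in R$; this makes sense for every $\bfit u \in \mathbb Z^n$ precisely because the $a_i$ are invertible (and multiplicative inverses in a semiring are unique when they exist).

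For uniqueness, any homomorphism $\phi$ with $\phi(x_i) = a_i$ must send $x_i^k$ to $a_i^k$ for each $k \in \mathbb Z$, hence $\phi(\bfit x^{\bfit u}) = \bfit a^{\bfit u}$ on monomials, and additivity then forces
$$\phi\!\left(\bigoplus_{\bfit u \in S_P} \bfit x^{\bfit u}\right) = \bigoplus_{\bfit u \in S_P} \bfit a^{\bfit u}.$$

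For existence I would simply adopt this last identity as the definition of $\phi$. Well-definedness is automatic because a formal Boolean Laurent polynomial is literally the datum of its support, and $\phi(0) = 0$, $\phi(1) = 1$ are immediate. The additive law $\phi(P \oplus Q) = \phi(P) \oplus \phi(Q)$ amounts to the identity $\bigoplus_{\bfit u \in S_P \cup S_Q} \bfit a^{\bfit u} = \bigl(\bigoplus_{\bfit u \in S_P} \bfit a^{\bfit u}\bigr) \oplus \bigl(\bigoplus_{\bfit u \in S_Q} \bfit a^{\bfit u}\bigr)$, which holds because duplicates coming from $S_P \cap S_Q$ collapse by the idempotency of $R$. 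The multiplicative law is the one step with real content: expanding by distributivity in $R$ yields $\phi(P) \odot \phi(Q) = \bigoplus_{(\bfit u, \bfit v) \in S_P \times S_Q} \bfit a^{\bfit u + \bfit v}$, whereas $\phi(P \odot Q) = \bigoplus_{\bfit w \in S_P + S_Q} \bfit a^{\bfit w}$ lists each lattice point of $S_P + S_Q$ exactly once. The two expressions agree precisely because duplicated copies of the same $\bfit a^{\bfit w}$ collapse in the idempotent semiring $R$. This is the only place where idempotency of $R$ is used in an essential way, and I regard it as the one genuine — though still routine — bookkeeping point in the proof.
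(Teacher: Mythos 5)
Your proof is correct and follows the same route as the paper: define $\phi$ by evaluation, $\phi(P) = P(a_1,\ldots,a_n)$, with uniqueness forced by the generating set $\{x_1, x_1^{-1},\ldots,x_n,x_n^{-1}\}$; the paper simply dismisses the verification as "easily checked," whereas you spell it out and correctly pinpoint that idempotency of $R$ is what makes both the additive check (duplicates from $S_P \cap S_Q$) and the multiplicative check (collisions $\bfit u + \bfit v = \bfit u' + \bfit v'$ in the Minkowski sum) go through. The only nitpick is the closing claim that the multiplicative law is "the only place" idempotency is essential --- as your own additive argument shows, it is needed there too.
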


\begin{proof}
Since $\Bxpm$ is generated by $\{ x_1, x_1^{-1}, \ldots, x_n, x_n^{-1} \}$, such $\phi$ is unique if it exists, that is, $\phi$ must be defined as $\phi(P) = P(a_1, \ldots, a_n)$.
It is easily checked that this $\phi$ is a semiring homomorphism.
\end{proof}

\begin{lem}
\label{universal1}
Fix monomials $\bfit y^{\bfit u_1}, \ldots, \bfit y^{\bfit u_n} \in \Bypm = \mathbb B[y_1^{\pm}, \ldots, y_m^{\pm}]$.
Then there exists a unique semiring homomorphism $\phi : \Bxpmfcn = \mathbb B[x_1^{\pm}, \ldots, x_n^{\pm}]_{\mathrm {fcn}} \to \Bypmfcn$ such that $\phi([x_i]) = [\bfit y^{\bfit u_i}]$ for any $i$.
\end{lem}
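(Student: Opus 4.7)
The plan is to factor the desired map through the polynomial semiring $\Bxpm$ by invoking the preceding lemma, and then check that it descends to the function semiring. Since each $[\bfit y^{\bfit u_i}]$ is invertible in $\Bypmfcn$ (with inverse $[\bfit y^{-\bfit u_i}]$), the previous lemma applied with $R = \Bypmfcn$ and $a_i = [\bfit y^{\bfit u_i}]$ produces a unique homomorphism $\tilde\phi : \Bxpm \to \Bypmfcn$ sending $x_i \mapsto [\bfit y^{\bfit u_i}]$; explicitly, $\tilde\phi(P) = [P(\bfit y^{\bfit u_1}, \ldots, \bfit y^{\bfit u_n})]$. Writing $q : \Bxpm \to \Bxpmfcn$ for the canonical quotient map, the task reduces to showing $\Ker(q) \subseteq \Ker(\tilde\phi)$, for then $\tilde\phi$ factors uniquely through $q$ as the required $\phi$.

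The containment $\Ker(q) \subseteq \Ker(\tilde\phi)$ is a substitution compatibility: if $P, Q \in \Bxpm$ induce the same function on $\mathbb R^n$, then the two polynomials $P(\bfit y^{\bfit u_1}, \ldots, \bfit y^{\bfit u_n})$ and $Q(\bfit y^{\bfit u_1}, \ldots, \bfit y^{\bfit u_n})$ induce the same function on $\mathbb R^m$. To see this, fix any $\bfit q \in \mathbb R^m$ and write $\bfit p := (\bfit u_1 \cdot \bfit q, \ldots, \bfit u_n \cdot \bfit q) \in \mathbb R^n$. Expanding $P = \bigoplus_i \bfit x^{\bfit v_i}$ and using that $\bfit y^{\bfit u_j}(\bfit q) = \bfit u_j \cdot \bfit q$, one computes
\[
P(\bfit y^{\bfit u_1}, \ldots, \bfit y^{\bfit u_n})(\bfit q) = \max_i \{ \bfit v_i \cdot \bfit p \} = P(\bfit p),
\]
and likewise for $Q$; since by hypothesis $P(\bfit p) = Q(\bfit p)$, the two values agree for every $\bfit q$.

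Uniqueness is straightforward. The semiring $\Bxpmfcn$ is generated by $\{[x_1^{\pm 1}], \ldots, [x_n^{\pm 1}]\}$ (together with the constants $-\infty$ and $[0]$, which any semiring homomorphism preserves), so the prescribed images $\phi([x_i]) = [\bfit y^{\bfit u_i}]$ (and consequently $\phi([x_i^{-1}]) = [\bfit y^{-\bfit u_i}]$ by multiplicativity) determine $\phi$ on all of $\Bxpmfcn$. There is no serious obstacle here; the only point requiring care is the chain-rule computation above, which essentially records that composing Boolean Laurent monomials corresponds on the function side to pulling back along the integer linear map $\bfit q \mapsto (\bfit u_1 \cdot \bfit q, \ldots, \bfit u_n \cdot \bfit q)$.
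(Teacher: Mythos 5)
Your proposal is correct and follows essentially the same route as the paper: both factor the map through $\Bxpm$ via the preceding universal-property lemma and then verify descent to $\Bxpmfcn$ by the same substitution computation $\phi_0(P)(\bfit q) = P(\bfit u_1 \cdot \bfit q, \ldots, \bfit u_n \cdot \bfit q)$. The only cosmetic difference is that you apply the lemma with target $\Bypmfcn$ directly, whereas the paper first lands in $\Bypm$ and then passes to functions; this changes nothing of substance.
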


\begin{proof}
Since $\Bxpmfcn$ is generated by $\{ [x_1], [x_1^{-1}], \ldots, [x_n], [x_n^{-1}] \}$, such $\phi$ is unique if it exists.
We show the existence.
Since $x_1, \ldots, x_n$ are invertible in $\Bxpm$, there exists a unique semiring homomorphism $\phi_0 :  \Bxpm \to \Bypm$ such that $\phi_0(x_i) = \bfit y^{\bfit u_i}$ for $i=1, \ldots, n$.
We show that $\phi_0$ induces a homomorphism $\Bxpmfcn \to \Bypmfcn$, i.e., $[\phi_0(P)] = [\phi_0(Q)]$ if $[P] = [Q]$.
Let $P, Q \in \Bxpm$ be polynomials such that $[P] = [Q]$.
Since
$$\phi_0(P(x_1, \ldots, x_n)) = P(\bfit y^{\bfit u_1}, \ldots, \bfit y^{\bfit u_n}) \text{ and }
\phi_0(Q(x_1, \ldots, x_n)) = Q(\bfit y^{\bfit u_1}, \ldots, \bfit y^{\bfit u_n}),$$
for any $\bfit p \in \mathbb R^m$,
\[ \phi_0(P)(\bfit p) = P(\bfit u_1 \cdot \bfit p, \ldots, \bfit u_n \cdot \bfit p) = Q(\bfit u_1 \cdot \bfit p, \ldots, \bfit u_n \cdot \bfit p) = \phi_0(Q)(\bfit p). \qedhere \]
\end{proof}

\subsection{One dimensional tropical fans}

A \textit{ray} in $\mathbb R^n$ is the subset of $\mathbb R^n$ of the form $\{ t \bfit d \ | \ t \geq 0 \}$ for some vector $\bfit d \in \mathbb R^n \smallsetminus \{ \mathbf 0 \}$. The vector $\bfit d$ is called a \textit{direction vector} of the ray.
For a ray $\rho$ and a direction vector $\bfit d$ of $\rho$, we say $\bfit d$ \textit{spans} $\rho$.
A ray is \textit{rational} if it is spanned by an integer vector.
Every rational ray has a unique primitive direction vector.
A \textit{1-dimensional fan} in $\mathbb R^n$ is a set of the form $\{ \{ {\mathbf 0} \}, \rho_1, \rho_2, \ldots, \rho_r \}$ for some positive integer $r$ and rational rays $\rho_1, \ldots, \rho_r$ in $\mathbb R^n$.
For a 1-dimensional fan $X$, we denote $X(1)$ the set of all the rays in $X$.
The \textit{support} $|X|$ of a 1-dimensional fan $X$ is the union of all the rays in $X$.
For two 1-dimensional fans $X \subset \mathbb R^n$ and $Y \subset \mathbb R^m$, a \textit{morphism} $\mu:X \to Y$ is a map from $|X|$ to $|Y|$ which is the restriction of a linear map from $\mathbb R^n$ to $\mathbb R^m$ defined by an integer matrix.
A \textit{weighted 1-dimensional fan} $(X, \omega_X)$ is a pair of a 1-dimensional fan $X$ and a map $\omega_X : X(1) \to \mathbb Z_{> 0}$.
We often write $X = (X, \omega_X)$.
We call $\omega_X(\rho)$ the \textit{weight} of $\rho$.

\begin{dfn}
Let $X = (X, \omega_X)$ be a weighted 1-dimensional fan.
Let $\rho_1, \ldots, \rho_r$ be all the rays in $X$ and $\bfit d_i$ be the primitive direction vector of $\rho_i$ for $i=1, \ldots, r$.
We denote $w_i := \omega_X(\rho_i)$.
Then $X$ is a \textit{1-dimensional tropical fan} if it satisfies the following condition, called \textit{balancing condition}:
$$w_1 \bfit d_1 + \cdots + w_r \bfit d_r = \mathbf 0.$$
A morphism of 1-dimensional tropical fans is that of 1-dimensional fans.
\end{dfn}

\begin{rem}
See \cite{GKM} for the definition of tropical fans of general dimensions.
Briefly, a tropical variety is an object obtained by gluing tropical fans.
See \cite{AR} for precise definition.
\end{rem}

\subsection{$\mathbb T$-extension}

In this subsection, we introduce the $\mathbb T$-extension of a semiring.
Some important semirings in this paper have the structure of $\mathbb T$-extension.

Fix a semiring $R = (R, +, \cdot, 0, 1)$.
Assume that $R \neq \{ 0 \}$, and that $f+g \neq 0$ and $f \cdot g \neq 0$ for any $f, g \in R \smallsetminus \{ 0 \}$.

\begin{dfn}
\label{Textdef}
The $\mathbb T$\textit{-extension} $R \times_e \mathbb T$ of $R$ is the semiring defined as follows:
As a set, $R \times_e \mathbb T = ((R \smallsetminus \{ 0 \}) \times \mathbb R) \cup \{ -\infty \}$.
For $f,g \in R \smallsetminus \{ 0 \}$ and $a,b \in \mathbb R$, we define
$$(f, a) + (g, b) =
\begin{cases}
(f, a) &\text{ if } a>b \\
(g, b) &\text{ if } a<b \\
(f+g, a) &\text{ if } a=b,
\end{cases}$$
$$(f, a) \cdot (g, b) = (f \cdot g, a+b).$$
Also we define $(f, a) + (-\infty) = (-\infty) + (f, a) = (f, a)$ for any $f \in R \smallsetminus \{ 0 \}$ and $a \in \mathbb R$, and $(-\infty) + (-\infty) = -\infty$.
The product of $-\infty$ and any element is $-\infty$.
\end{dfn}

It is easily checked that $R \times_e \mathbb T$ forms a semiring.
We give some properties of $\mathbb T$-extensions.

\begin{lem}
\label{Textcong}
A congruence $\mathcal C$ on $R \times_e \mathbb T$ satisfies just one of the following properties: 
\begin{enumerate}[$(1)$]
\item If $(f, a) \simwrt{\mathcal C} (g, b)$, then $a=b$.
\item $(f, a) \simwrt{\mathcal C} (g, b)$ for any $f, g \in R \smallsetminus \{ 0 \}$ and $a,b \in \mathbb R$.
\end{enumerate}
\end{lem}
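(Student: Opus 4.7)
My plan is to prove the contrapositive: assume condition (1) fails and deduce condition (2). The two conditions are clearly mutually exclusive, since $(1,0)$ and $(1,1)$ are elements of $(R\smallsetminus\{0\})\times \mathbb R$ with different second coordinates, so (2) would contradict (1). Assuming (1) fails, I pick a witness $(f_0,a_0)\simwrt{\mathcal C}(g_0,b_0)$ with $a_0 > b_0$ (after possibly swapping the pair), and set $\Delta := a_0 - b_0 > 0$.

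The first key step is a dominance trick that spreads the congruence across an interval of heights: for any $h \in R\smallsetminus\{0\}$ and any $c \in (b_0,a_0)$, I add $(h,c)$ to both sides of the congruence. Because $a_0 > c$, the left-hand sum collapses to $(f_0,a_0)$; because $c > b_0$, the right-hand sum collapses to $(h,c)$. Hence $(f_0,a_0)\simwrt{\mathcal C}(h,c)$ for every such $h,c$, and by transitivity $(h_1,c_1)\simwrt{\mathcal C}(h_2,c_2)$ for all $h_1,h_2 \in R\smallsetminus\{0\}$ and all $c_1,c_2 \in (b_0,a_0)$. The second step is to translate this interval along $\mathbb R$: since $R \neq \{0\}$ forces $1 \neq 0$ in $R$, each $(1,d)$ is a unit in $R\times_e \mathbb T$ with inverse $(1,-d)$. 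Multiplying the previous congruence by $(1,d)$ upgrades the statement to ``$(h_1,c_1)\simwrt{\mathcal C}(h_2,c_2)$ whenever $|c_1-c_2| < \Delta$'', with no restriction on the common location of $c_1, c_2$ in $\mathbb R$.

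To finish, I chain short steps. Given arbitrary $(h_1,e_1)$ and $(h_2,e_2)$, I pick $N \in \mathbb Z_{>0}$ with $|e_1-e_2|/N < \Delta$, interpolate by $t_i := e_1 + i(e_2-e_1)/N$ for $0\le i\le N$, and concatenate
$$(h_1,e_1)\simwrt{\mathcal C}(1,t_1)\simwrt{\mathcal C}(1,t_2)\simwrt{\mathcal C}\cdots\simwrt{\mathcal C}(1,t_{N-1})\simwrt{\mathcal C}(h_2,e_2),$$
each link valid by the previous step since consecutive $t_i$'s differ by less than $\Delta$. This yields (2). The only nontrivial move is the dominance trick in the first step, which exploits the max-like semantics of addition in the $\mathbb T$-extension; once available, the rest is routine manipulation of units and transitivity, and I do not anticipate any serious obstacle.
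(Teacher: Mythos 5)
Your proof is correct and follows essentially the same route as the paper's: the dominance trick of adding $(h,c)$ with $c$ strictly between the two heights, followed by translation via the units $(1,d)$, are exactly the paper's two key steps. The only cosmetic difference is the final globalization (you chain interpolating points of spacing less than $\Delta$, while the paper takes unions of overlapping translated intervals), which amounts to the same argument.
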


\begin{proof}
In this proof, we omit $\mathcal C$ in $\simwrt{\mathcal C}$.
There is no congruence satisfying (1) and (2) because, if there exists such a congruence, for any $f \in R \smallsetminus \{ 0 \}$, $(f, 0) \not\sim (f, 1)$ by (1) and $(f, 0) \sim (f, 1)$ by (2), which is a contradiction.
Thus it is sufficient to show that if $\mathcal C$ does not satisfy $(1)$, then it satisfies $(2)$.
Assume that there exist $f, g \in R \smallsetminus \{ 0 \}$ and $a,b \in \mathbb R$ such that $(f, a) \sim (g, b)$ and $a < b$.
Let $I \subset \mathbb R$ be an interval.
Consider the following statement:
$$(*) \quad (h_1, c_1) \sim (h_2, c_2) \text{ for any } h_1, h_2 \in R \smallsetminus \{ 0 \} \text{ and } c_1, c_2 \in I.$$
Our purpose is to show that $(*)$ holds for $I = \mathbb R$.

For any $h \in R \smallsetminus \{ 0 \}$ and $c \in \mathbb R$ such that $a<c<b$,
$$(h, c) = (h, c) + (f, a) \sim (h, c) + (g, b) = (g, b).$$
Thus for any $h_1, h_2 \in R \smallsetminus \{ 0 \}$ and $c_1, c_2 \in \mathbb R$ such that $a<c_1, c_2<b$,
$$(h_1, c_1) \sim (g, b) \sim (h_2, c_2).$$
Hence $(*)$ holds for $I=(a,b)$.

Next we show that $(*)$ holds for the interval $(a+ \epsilon, b + \epsilon)$ for any $\epsilon \in \mathbb R$.
For any $h_1, h_2 \in R \smallsetminus \{ 0 \}$ and $c_1, c_2 \in \mathbb R$ such that $a+\epsilon<c_1, c_2<b+\epsilon$,
$$(h_1, c_1) = (h_1, c_1-\epsilon) \cdot (1, \epsilon) \sim (h_2, c_2-\epsilon) \cdot (1, \epsilon) = (h_2, c_2),$$
then $(*)$ holds for $(a+ \epsilon, b + \epsilon)$.

Let $I_1$ and $I_2$ be two intervals such that $I_1 \cap I_2 \neq \emptyset$. 
Assume that $(*)$ holds for $I_1$ and $I_2$.
Thus $(*)$ holds for $I_1 \cup I_2$ because, for any $h_1, h_2 \in R \smallsetminus \{ 0 \}, c_1, c_2 \in I_1 \cup I_2$, and $c_3 \in I_1 \cap I_2$,
$$(h_1, c_1) \sim (1, c_3) \sim (h_2, c_2).$$

Combining the results so far, it is shown that $(*)$ holds for any bounded open interval.
Finally, for any $h_1, h_2 \in R \smallsetminus \{ 0 \}$ and $c_1, c_2 \in \mathbb R$, $(h_1, c_1) \sim (h_2, c_2)$ since $(*)$ holds for $(c_1-1, c_2+1)$.
It means that $(*)$ holds for $I= \mathbb R$.
\end{proof}

We say a congruence on $R \times_e \mathbb T$ is \textit{partial} if it satisfies the condition (1) in the previous lemma. 
Note that any partial congruence $\mathcal C$ on $R \times_e \mathbb T$ has the following property:
$$(f, a) \nsimwrt{\mathcal C} -\infty \text{ for any } f \in R \smallsetminus \{ 0 \} \text{ and } a \in \mathbb R.$$
Indeed, if $(f, a) \simwrt{\mathcal C} -\infty$,
$$(f, a) = (f,a) + (1,a-1) \simwrt{\mathcal C} (-\infty) + (1, a-1) = (1, a-1).$$
It contradicts to the assumption that $\mathcal C$ is partial.

\begin{lem}
\label{Textcongr}
There is a bijection between the set of all partial congruences on $R \times_e \mathbb T$ and the set of all congruences $\mathcal C$ on $R$ which satisfy the following condition:
$$(*) \quad f \nsimwrt{\mathcal C} 0 \ \text{ for any } \ f \in R \smallsetminus \{ 0 \}.$$
For a partial congruence $\mathcal C$ on $R \times_e \mathbb T$, the corresponding congruence on $R$ is
$$\Phi(\mathcal C) = \{ (f, g) \in (R \smallsetminus \{ 0 \})^2 \ | \ ((f, 0), (g, 0)) \in \mathcal C \} \cup \{ (0,0) \}.$$
For a congruence $\mathcal C'$ on $R$ which satisfies $(*)$, the corresponding congruence on $R \times_e \mathbb T$ is
$$\Psi(\mathcal C') = \{ ((f, a), (g, a)) \in ((R \smallsetminus \{ 0 \}) \times \mathbb R)^2 \ | \ (f, g) \in \mathcal C', a \in \mathbb R \} \cup \{ (-\infty, -\infty) \}.$$
\end{lem}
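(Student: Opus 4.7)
The plan is to verify that the two maps $\Phi$ and $\Psi$ defined in the statement are well-defined and mutually inverse, which will establish the bijection. The structural observations established just before the statement (that $(f,a) \nsimwrt{\mathcal C} -\infty$ for any partial congruence $\mathcal C$, and that partial congruences force equality of second coordinates) will do most of the heavy lifting.

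First I would show that $\Phi(\mathcal C)$ is a congruence on $R$ satisfying $(*)$. Reflexivity, symmetry, and transitivity on $(R \smallsetminus \{0\})^2$ are inherited from $\mathcal C$, and the pair $(0,0)$ is added by hand. For compatibility with the operations, if $(f,g), (h,k) \in \Phi(\mathcal C)$ with all four nonzero, then in $R \times_e \mathbb T$ we have $(f,0) + (h,0) = (f+h, 0)$ and $(f,0) \cdot (h,0) = (fh, 0)$, and the standing hypothesis that $R$ has no additive or multiplicative annihilators keeps $f+h$ and $fh$ in $R \smallsetminus \{0\}$, so the images $(f+h, g+k)$ and $(fh, gk)$ lie in $\Phi(\mathcal C)$. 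The edge cases where one argument is $0$ reduce, using $(*)$ on the output side, to the identity statements $(f, f) \in \Phi(\mathcal C)$ and $(0, 0) \in \Phi(\mathcal C)$. Property $(*)$ holds by construction, since no pair $(f, 0)$ with $f \neq 0$ is placed in $\Phi(\mathcal C)$.

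Next I would verify that $\Psi(\mathcal C')$ is a partial congruence. The one nontrivial check is closure under addition: given $((f,a),(g,a))$ and $((h,b),(k,b))$ in $\Psi(\mathcal C')$, split into the three cases $a>b$, $a<b$, and $a=b$ following the definition of $+$ on $R \times_e \mathbb T$. In the first two cases the sums equal one of the summands, and in the case $a=b$ the sums are $(f+h, a)$ and $(g+k, a)$, where $f+h \simwrt{\mathcal C'} g+k$ since $\mathcal C'$ is a congruence. Closure under multiplication and the interactions with $-\infty$ are immediate, and partiality is obvious by inspection.

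Finally I would verify the two compositions. The identity $\Phi \circ \Psi = \id$ drops out of the definitions together with $(*)$ (which tells us that the $(0,0)$ piece is exactly the missing part of $\mathcal C'$). For $\Psi \circ \Phi = \id$, the key is that $(1, a)$ is a unit of $R \times_e \mathbb T$ with inverse $(1, -a)$, so multiplying both sides gives the equivalence $((f,a),(g,a)) \in \mathcal C \iff ((f,0),(g,0)) \in \mathcal C$ for $f, g \in R \smallsetminus \{0\}$; combined with the observation that a partial congruence contains no pair $((f,a), -\infty)$ with $f \neq 0$, this shows $\mathcal C$ is entirely recovered by its restriction to the slice $R \smallsetminus \{0\} \times \{0\}$ together with the diagonal pair $(-\infty, -\infty)$. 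I expect the main obstacle to be merely the bookkeeping of the three-way case split for addition in $R \times_e \mathbb T$ and making sure the boundary behavior at $-\infty$ is handled uniformly, rather than any conceptual hurdle.
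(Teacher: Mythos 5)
Your proposal is correct and follows essentially the same route as the paper: verify that $\Phi(\mathcal C)$ is a congruence satisfying $(*)$ and $\Psi(\mathcal C')$ a partial congruence via the three-way case split for addition, then establish mutual inversion, with $\Psi(\Phi(\mathcal C)) = \mathcal C$ hinging on multiplication by the unit $(1,\pm a)$ and the fact that no $(f,a)$ is congruent to $-\infty$ under a partial congruence. Your explicit remark that the standing hypothesis on $R$ (no additive or multiplicative annihilators) keeps $f+h$ and $fh$ nonzero is a point the paper leaves implicit, but there is no substantive difference in approach.
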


\begin{proof}
Step1. $\Phi(\mathcal C)$ is a congruence on $R$ which satisfies $(*)$.

It is easy to check that $\Phi(\mathcal C)$ is an equivalence relation.
Let us show that (C4) holds.
If $f_1 \simwrt{\Phi(\mathcal C)} g_1, f_2 \simwrt{\Phi(\mathcal C)} g_2$ for $f_1, f_2, g_1, g_2 \in R \smallsetminus \{ 0 \}$, then $(f_1, 0) \simwrt{\mathcal C} (g_1,0), (f_2, 0) \simwrt{\mathcal C} (g_2,0)$ by the definition of $\Phi(\mathcal C)$.
Thus
$$(f_1 + f_2, 0) = (f_1, 0) + (f_2, 0) \simwrt{\mathcal C} (g_1, 0) + (g_2, 0) = (g_1+g_2, 0),$$
$$(f_1 \cdot f_2, 0) = (f_1, 0) \cdot (f_2, 0)  \simwrt{\mathcal C} (g_1, 0) \cdot (g_2, 0) = (g_1 \cdot g_2, 0),$$
and hence $f_1 + f_2 \simwrt{\Phi(\mathcal C)} g_1 + g_2$ and $f_1 \cdot f_2 \simwrt{\Phi(\mathcal C)} g_1 \cdot g_2$.
The remaining things to show are the following:
\begin{itemize}
\item If $f \simwrt{\Phi(\mathcal C)} g$, then $f + 0 \simwrt{\Phi(\mathcal C)} g + 0 \text{ and } f \cdot 0 \simwrt{\Phi(\mathcal C)} g \cdot 0$,
\item $0 + 0 \simwrt{\mathcal C} 0 + 0 \text{ and } 0 \cdot 0 \simwrt{\mathcal C} 0 \cdot 0.$
\end{itemize}
These are clear.
Hence (C4) holds.
Finally, $\Phi(\mathcal C)$ satisfies $(*)$ by the definition of $\Phi(\mathcal C)$.\\

\noindent
Step2. $\Psi(\mathcal C')$ is a partial congruence on $R \times_e \mathbb T$.

It is easy to check that $\Psi(\mathcal C')$ is an equivalence relation.
Let us show that (C4) holds.
If $(f_1, a_1) \simwrt{\Psi(\mathcal C')} (g_1, a_1), (f_2, a_2) \simwrt{\Psi(\mathcal C')} (g_2,a_2)$ for $f_1, f_2, g_1, g_2 \in R \smallsetminus \{ 0 \}$ and $a_1, a_2 \in \mathbb R$, then $f_1 \simwrt{\mathcal C'} g_1, f_2 \simwrt{\mathcal C'} g_2$ by the definition of $\Psi(\mathcal C')$.
Thus $f_1 \cdot f_2 \simwrt{\mathcal C'} g_1 \cdot g_2$, and hence
$$(f_1, a_1) \cdot (f_2, a_2) = (f_1 \cdot f_2, a_1 +a_2) \simwrt{\Psi(\mathcal C')} (g_1 \cdot g_2, a_1+ a_2) = (g_1, a_1) \cdot (g_2, a_2).$$
As for the addition, if $a_1 > a_2$, 
$$(f_1, a_1) + (f_2, a_2) = (f_1, a_1) \simwrt{\Psi(\mathcal C)} (g_1, a_1) = (g_1, a_1) + (g_2, a_2).$$
The case $a_1 < a_2$ is similar.
If $a_1 = a_2$, since $f_1 + f_2  \simwrt{\mathcal C'} g_1+g_2$,  
$$(f_1, a_1) + (f_2, a_2) = (f_1 + f_2, a_1)  \simwrt{\Psi(\mathcal C')} (g_1 + g_2, a_1) = (g_1, a_1) + (g_2, a_2).$$
The remaining things to show are the following:
\begin{itemize}
\item If $f \simwrt{\Psi(\mathcal C')} g$, then $(f, a) + (-\infty) \simwrt{\Psi(\mathcal C')} (g,a) + (-\infty) \text{ and } (f, a) \cdot (-\infty) \simwrt{\Psi(\mathcal C')} (g, a) \cdot (-\infty)$,
\item $(-\infty) + (-\infty) \simwrt{\Psi(\mathcal C')} (-\infty) + (-\infty) \text{ and } (-\infty) \cdot (-\infty) \simwrt{\Psi(\mathcal C')} (-\infty) \cdot (-\infty).$
\end{itemize}
These are clear.
Hence (C4) holds.
Finally, $\Psi(\mathcal C')$ is obviously partial.\\

\noindent
Step3. $\Psi(\Phi(\mathcal C)) = \mathcal C$ for any partial congruence $\mathcal C$ on $R \times_e \mathbb T$.

If $(f, a) \simwrt{\Psi(\Phi(\mathcal C))} (g, a)$ for $f, g \in R \smallsetminus \{ 0 \}$ and $a \in \mathbb R$, then $f \simwrt{\Phi(\mathcal C)} g$, hence $(f, 0) \simwrt{\mathcal C} (g, 0)$, therefore $(f, a) = (f, 0) \cdot (1, a) \simwrt{\mathcal C} (g, 0) \cdot (1, a) = (g, a)$.
It means that $\Psi(\Phi(\mathcal C)) \subset \mathcal C$.
Conversely, if $(f, a) \simwrt{\mathcal C} (g, a)$ for $f, g \in R \smallsetminus \{ 0 \}$ and $a \in \mathbb R$, then $(f, 0) = (f, a) \cdot (1, -a) \simwrt{\mathcal C} (g, a) \cdot (1, -a) = (g, 0)$, hence $f \simwrt{\Phi(\mathcal C)}  g$, therefore $(f, a) \simwrt{\Psi(\Phi(\mathcal C))} (g, a)$.
It means that $\Psi(\Phi(\mathcal C)) \supset \mathcal C$.\\

\noindent
Step4. $\Phi(\Psi(\mathcal C')) = \mathcal C'$ for any congruence $\mathcal C'$ on $R$ which satisfies $(*)$.

If $f \simwrt{\Phi(\Psi(\mathcal C'))} g$ for $f, g \in R \smallsetminus \{ 0 \}$, then $(f, 0) \simwrt{\Psi(\mathcal C')} (g, 0)$, hence $f \simwrt{\mathcal C'} g$.
It means that $\Phi(\Psi(\mathcal C')) \subset \mathcal C'$.
Conversely, if $f \simwrt{\mathcal C'} g$ for $f, g \in R \smallsetminus \{ 0 \}$, then $(f, 0) \simwrt{\Psi(\mathcal C')} (g, 0)$, hence $f \simwrt{\Phi(\Psi(\mathcal C'))} g$.
It means that $\Phi(\Psi(\mathcal C')) \supset \mathcal C'$.
\end{proof}

\begin{lem}
\label{Textquot}
Let $\mathcal C$ be a partial congruence on $R \times_e \mathbb T$ and $\mathcal C'$ be the congruence on $R$ corresponding to $\mathcal C$ via the map in the previous lemma.
Then $(R \times_e \mathbb T) / \mathcal C$ is isomorphic to $(R / \mathcal C') \times_e \mathbb T$.
\end{lem}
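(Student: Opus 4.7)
The plan is to construct the isomorphism explicitly. Define
\[
\Phi : (R \times_e \mathbb T) / \mathcal C \longrightarrow (R / \mathcal C') \times_e \mathbb T
\]
by $\Phi([(f, a)]) = ([f], a)$ for $f \in R \smallsetminus \{0\}$ and $a \in \mathbb R$, and $\Phi([-\infty]) = -\infty$. The main things I need to check are: (i) $\Phi$ takes values in the target (so $[f] \neq 0$ in $R/\mathcal C'$); (ii) $\Phi$ is well-defined on $\mathcal C$-classes; (iii) $\Phi$ is a semiring homomorphism; (iv) $\Phi$ is bijective.

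For (i), the condition $(*)$ on $\mathcal C'$ from Lemma \ref{Textcongr} guarantees that $f \neq 0$ implies $[f] \neq [0]$ in $R/\mathcal C'$, so $([f], a)$ is a legitimate element of $(R/\mathcal C') \times_e \mathbb T$. For (ii), suppose $(f, a) \simwrt{\mathcal C} (g, b)$ with both not equal to $-\infty$. Since $\mathcal C$ is partial, $a = b$. Multiplying by $(1, -a)$ gives $(f, 0) \simwrt{\mathcal C} (g, 0)$, which by the definition of $\Phi(\mathcal C) = \mathcal C'$ means $f \simwrt{\mathcal C'} g$, i.e., $[f] = [g]$. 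The case where one side is $-\infty$ is handled by the remark after Lemma \ref{Textcong}: a partial congruence never identifies $(f, a)$ with $-\infty$, so both must be $-\infty$.

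For (iii), the multiplicative compatibility is immediate because $\Phi([(f, a) \cdot (g, b)]) = \Phi([(f \cdot g, a + b)]) = ([f \cdot g], a + b) = ([f], a) \cdot ([g], b)$. The additive compatibility is checked by splitting into the three cases $a > b$, $a < b$, $a = b$ in the definition of addition on both $\mathbb T$-extensions; in the equal-exponent case one uses $[f] + [g] = [f + g]$ in $R / \mathcal C'$. The unit $[-\infty]$ and the multiplicative identity $[(1, 0)]$ are preserved.

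For (iv), surjectivity is obvious since every $([f], a)$ with $f \neq 0$ is the image of $[(f, a)]$, and $-\infty$ is hit by $[-\infty]$. For injectivity, if $\Phi([(f, a)]) = \Phi([(g, b)])$ and neither side is $-\infty$, then $a = b$ and $[f] = [g]$, so $f \simwrt{\mathcal C'} g$, and again by the definition of $\mathcal C'$ we get $(f, 0) \simwrt{\mathcal C} (g, 0)$; multiplying by $(1, a)$ yields $(f, a) \simwrt{\mathcal C} (g, a)$. The $-\infty$ case is trivial. No step is a real obstacle; the only subtlety is ensuring that the map lands in the correct target, which is exactly where the condition $(*)$ on $\mathcal C'$ is used.
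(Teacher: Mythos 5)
Your proof is correct and follows essentially the same route as the paper: the paper defines the same map $(f,a)\mapsto([f],a)$, $-\infty\mapsto-\infty$ as a surjective homomorphism $R\times_e\mathbb T\to(R/\mathcal C')\times_e\mathbb T$ and shows its kernel is $\mathcal C$, which is just your well-definedness plus injectivity argument packaged via the first isomorphism theorem. The only point the paper makes explicit that you gloss over is that $(R/\mathcal C')\times_e\mathbb T$ is actually defined, i.e.\ that $R/\mathcal C'\neq\{[0]\}$ and that $[f]+[g]\neq[0]$ and $[f]\cdot[g]\neq[0]$ for nonzero classes, all of which follow immediately from condition $(*)$ together with the standing hypotheses on $R$.
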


\begin{proof}
First we check that $R/ \mathcal C'$ satisfies the assumption to define its $\mathbb T$-extension.
Since $C'$ satisfies the condition $(*)$ in the previous lemma, we have $1 \nsimwrt{\mathcal C'} 0$.
Thus $R/ \mathcal C' \neq \{ 0 \}$.
For any $[f], [g] \in (R/\mathcal C') \smallsetminus \{ [0] \}$, we have $f \neq 0$ and $g\neq 0$, and then $f+g \neq 0$ and $f \cdot g \neq 0$, hence $[f]+[g] = [f+g] \neq [0]$ and $[f] \cdot [g] = [f \cdot g] \neq [0]$ by $(*)$.

Consider the map
$$\pi:R \times_e \mathbb T \to (R/\mathcal C') \times_e \mathbb T, (f,a) \mapsto ([f], a), -\infty \mapsto -\infty.$$
It is easily checked that $\pi$ is surjective homomorphism.
Thus it is enough to show that $\Ker(\pi) = \mathcal C$.
Clearly $\pi((f,a)) \neq \pi(-\infty)$ for any $f \in R \smallsetminus \{ 0 \}$ and $a \in \mathbb R$.
Thus the elements of $\Ker(\pi)$ are $(-\infty, -\infty)$ or of the form $((f, a), (g, b))$ for some $f, g \in R \smallsetminus \{ 0 \}$ and $a, b \in \mathbb R$.
If $\pi((f, a)) = \pi((g,b))$ for $f, g \in R \smallsetminus \{ 0 \}$ and $a, b \in \mathbb R$,
then $[f]=[g]$ and $a=b$, hence $f \simwrt{\mathcal C'} g$.
It means that $(f, 0) \simwrt{\mathcal C} (g, 0)$ and then $(f, a) = (f, 0) \cdot (1,a) \simwrt{\mathcal C} (g,0) \cdot (1, b) = (g, b)$.
This shows $\Ker(\pi) \subset \mathcal C$.
Conversely, if $(f, a) \simwrt{\mathcal C} (g, b)$ for $f, g \in R \smallsetminus \{ 0 \}$ and $a, b \in \mathbb R$, then $a=b$ since $\mathcal C$ is partial.
As $(f, 0) = (f, a) \cdot (1, -a) \simwrt{\mathcal C} (g, b) \cdot (1, -b) = (g, 0)$, we have $f \simwrt{\mathcal C'} g$.
Therefore $\pi((f, a)) = ([f], a) = ([g], b) = \pi((g, b))$.
This shows $\Ker(\pi) \supset \mathcal C$.
\end{proof}

\section{Local theory of tropical Laurent polynomials}

In the rest of paper, we fix a positive integer $n$.
The notations $\Txpm$ and $\Bxpm$ are always mean $\mathbb T[x_1^{\pm}, \ldots, x_n^{\pm}]$ and $\mathbb B[x_1^{\pm}, \ldots, x_n^{\pm}]$ respectively.

In this section, we fix a vector $\bfit p = (p_1, \ldots, p_n) \in \mathbb R^n$.
Let $\Cp$ be the congruence
$$\left\{ (f, g)  \ \middle| \ 
\begin{aligned}
&\text{ there exists an open neighborhood $U$ of $\bfit p$ } \\
&\text{ such that }  f|_U = g|_U
\end{aligned} \right\}$$
on $\Txpmfcn$, where the topology on $\mathbb R^n$ is the Euclidean topology.
The purpose of this section is to show that the semiring $\Txpmfcn / \Cp$ is isomorphic to the $\mathbb T$-extension of $\Bxpmfcn$.

\subsection{Initial forms of tropical Laurent polynomials}

\begin{dfn}
For a tropical Laurent polynomial $P = \bigoplus_{\bfit u} a_{\bfit u} \odot \bfit x^{\bfit u} \in \mathbb T[\bfit x^{\pm}]$, the \textit{initial form} $\inp (P)$ of $P$ at $\bfit p$ is defined as
$$\inp(P) = \bigoplus_{a_{\bfit u} + \bfit u \cdot \bfit p = P(\bfit p)} a_{\bfit u} \odot \bfit x^{\bfit u}.$$
In other words, $\inp(P)$ is the sum of the terms of $P$ which attain the maximum of $\bigoplus_{\bfit u} a_{\bfit u} \odot \bfit p^{\bfit u} = \max_{\bfit u} \{ a_{\bfit u} + \bfit u \cdot \bfit p \}$.
\end{dfn}

\begin{ex}
\label{exin0}
In the case $\bfit p = {\mathbf 0} = (0, \ldots, 0)$, for a tropical Laurent polynomial $P \in \Txpm$, ${\mathrm {in}}_{\mathbf 0}(P)$ is the sum of the terms of $P$ which have maximum coefficient.
For example, in $\mathbb T[x^{\pm}, y^{\pm}]$,
$${\mathrm {in}}_{\mathbf 0}(1 \oplus (3 \odot x) \oplus (2 \odot y) \oplus (3 \odot x \odot y)) = (3 \odot x) \oplus (3 \odot x \odot y).$$
\end{ex}

Although the map $\inp : \Txpm \to \Txpm$ is not a homomorphism of semirings (see the following remark), we use the following notations:
$$\Ker(\inp) := \left\{ (P,Q) \ \middle| \ \inp(P)=\inp(Q) \right\}, \qquad \Im(\inp) := \left\{ \inp(P) \ \middle| \ P \in \Txpm \right\}.$$
Clearly $\Ker(\inp)$ is an equivalence relation on $\Txpm$.
Thus there is a bijection from $\Txpm / \Ker(\inp)$ to $\Im(\inp)$ induced by $\inp$.
Later we see that $\Ker(\inp)$ is in fact a congruence.

\begin{rem}
The map $\inp:\Txpm \to \Txpm$ is not a homomorphism of semirings.
For instance, in $\mathbb T[x^{\pm}]$, ${\mathrm {in}}_0(1 \oplus x) = 1$, while ${\mathrm {in}}_0(1) \oplus {\mathrm {in}}_0(x) = 1 \oplus x$.
Moreover, $\Im(\inp)$ is not a subsemiring of $\Txpm$.
For instance, in $\mathbb T[x^{\pm}]$, $1$ and $x$ are in $\Im ({\mathrm {in}}_0)$, while $1 \oplus x \not\in \Im ({\mathrm {in}}_0)$ because a tropical Laurent polynomial which has at least two terms with different coefficients is not in $\mathrm{Im}(\mathrm{in}_0)$ by Example \ref{exin0}.
\end{rem}

\begin{lem}
\label{lem:inp add mult}
For any $P, Q \in \Txpm$,
$$\inp(P \oplus Q) =
\begin{cases}
\inp(P) &(P(\bfit p) > Q(\bfit p))\\
\inp(Q) &(P(\bfit p) < Q(\bfit p))\\
\inp(P) \oplus \inp(Q) &(P(\bfit p) = Q(\bfit p)),
\end{cases}$$
and
$$\inp(P \odot Q) = \inp(P) \odot \inp(Q).$$
\end{lem}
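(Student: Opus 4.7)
The plan is to unfold the definition of $\inp$ and match coefficients term by term. Write $P = \bigoplus_{\bfit u} a_{\bfit u} \odot \bfit x^{\bfit u}$ and $Q = \bigoplus_{\bfit u} b_{\bfit u} \odot \bfit x^{\bfit u}$ in $\Txpm$. The cases in which $P$ or $Q$ equals $-\infty$ are immediate, so assume both are nonzero. Throughout I will use $(P \oplus Q)(\bfit p) = P(\bfit p) \oplus Q(\bfit p)$ and $(P \odot Q)(\bfit p) = P(\bfit p) \odot Q(\bfit p)$, which are immediate from the definition of evaluation.

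For the additive identity, note that $P \oplus Q = \bigoplus_{\bfit u} (a_{\bfit u} \oplus b_{\bfit u}) \odot \bfit x^{\bfit u}$, so a term indexed by $\bfit u$ belongs to $\inp(P \oplus Q)$ exactly when $(a_{\bfit u} \oplus b_{\bfit u}) + \bfit u \cdot \bfit p = (P \oplus Q)(\bfit p)$. Suppose $P(\bfit p) > Q(\bfit p)$. For any $\bfit u$ we have $b_{\bfit u} + \bfit u \cdot \bfit p \leq Q(\bfit p) < P(\bfit p)$, so if $\bfit u$ survives in $\inp(P \oplus Q)$ then $a_{\bfit u} + \bfit u \cdot \bfit p = P(\bfit p)$ and $a_{\bfit u} \geq b_{\bfit u}$, whence $a_{\bfit u} \oplus b_{\bfit u} = a_{\bfit u}$; conversely every term of $\inp(P)$ survives. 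This identifies $\inp(P \oplus Q)$ with $\inp(P)$. The case $P(\bfit p) < Q(\bfit p)$ is symmetric. When $P(\bfit p) = Q(\bfit p)$, a term $\bfit u$ survives iff at least one of $a_{\bfit u} + \bfit u \cdot \bfit p = P(\bfit p)$ or $b_{\bfit u} + \bfit u \cdot \bfit p = Q(\bfit p)$ holds, and in that case the coefficient $a_{\bfit u} \oplus b_{\bfit u}$ is exactly the coefficient of $\bfit x^{\bfit u}$ in $\inp(P) \oplus \inp(Q)$ because $\oplus$ is the pointwise max.

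For the multiplicative identity, the coefficient of $\bfit x^{\bfit u}$ in $P \odot Q$ is $c_{\bfit u} = \max_{\bfit v + \bfit w = \bfit u}(a_{\bfit v} + b_{\bfit w})$. For every decomposition $\bfit u = \bfit v + \bfit w$ we have
$$a_{\bfit v} + b_{\bfit w} + \bfit u \cdot \bfit p = (a_{\bfit v} + \bfit v \cdot \bfit p) + (b_{\bfit w} + \bfit w \cdot \bfit p) \leq P(\bfit p) + Q(\bfit p),$$
with equality if and only if $a_{\bfit v} + \bfit v \cdot \bfit p = P(\bfit p)$ and $b_{\bfit w} + \bfit w \cdot \bfit p = Q(\bfit p)$. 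Hence $c_{\bfit u} + \bfit u \cdot \bfit p = P(\bfit p) + Q(\bfit p)$ iff some decomposition $\bfit u = \bfit v + \bfit w$ witnesses both maxima, and when this holds the value $c_{\bfit u}$ equals $P(\bfit p) + Q(\bfit p) - \bfit u \cdot \bfit p$. But the product $\inp(P) \odot \inp(Q)$ is by construction the sum, over pairs $(\bfit v, \bfit w)$ attaining the two maxima, of $a_{\bfit v} \odot b_{\bfit w} \odot \bfit x^{\bfit v + \bfit w}$, so its support in $\bfit u$ is precisely the same set, and the coefficient there is also $P(\bfit p) + Q(\bfit p) - \bfit u \cdot \bfit p$.

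The whole argument is elementary bookkeeping using the definitions of $\oplus$, $\odot$ and $\inp$; the only place that requires a moment of care is the $P(\bfit p) = Q(\bfit p)$ case of addition, where one must check that overlapping monomials combine compatibly, and the multiplicative case, where one must verify that summing over pairs $(\bfit v, \bfit w)$ on the two initial supports exactly reproduces the decomposition-based description of $\inp(P \odot Q)$. No essential difficulty arises.
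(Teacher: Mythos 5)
Your proof is correct and follows essentially the same route as the paper's: a direct unfolding of the definition of $\inp$, checking which terms attain the maximum in $P \oplus Q$ and $P \odot Q$. The only (cosmetic) difference is that you index coefficients by exponent vectors, which makes the collecting-of-like-terms step explicit, whereas the paper works with term lists $\bigoplus_{i=1}^k a_i \odot \bfit x^{\bfit u_i}$ and reorders so the initial terms come first.
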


\begin{proof}
Let $P= \bigoplus_{i=1}^k a_i \odot \bfit x^{\bfit u_i}$ and $Q= \bigoplus_{j=1}^l b_j \odot \bfit x^{\bfit v_j}$.
We may assume that $\inp (P) = \bigoplus_{i=1}^{k'} a_i \odot \bfit x^{\bfit u_i}$ and $\inp (Q) = \bigoplus_{j=1}^{l'} b_j \odot \bfit x^{\bfit v_j}$ for some $k', l'$ without loss of generality.
By the definition of initial forms, the maximum of $\{ a_i + \bfit u_i \cdot \bfit p \}_{i=1}^k$ is attained at $i=1, \ldots, k'$, and the maximum of $\{ b_j + \bfit v_j \cdot \bfit p \}_{j=1}^l$ is attained at $j=1, \ldots, l'$.

We now consider the initial form of
$$P \oplus Q = \bigoplus_{i=1}^k a_i \odot \bfit x^{\bfit u_i} \oplus \bigoplus_{j=1}^l b_j \odot \bfit x^{\bfit v_j}$$
at $\bfit p$.
If $P(\bfit p) > Q(\bfit p)$, the maximum of $\{ a_i + \bfit u_i \cdot \bfit p \}_{i=1}^k \cup \{ b_j + \bfit v_j \cdot \bfit p \}_{j=1}^l$ is attained at $i=1, \ldots, k'$.
Then $\inp(P \oplus Q) = \inp(P)$.
The case $P(\bfit p) < Q(\bfit p)$ is similar.
If $P(\bfit p) = Q(\bfit p)$, the maximum of $\{ a_i + \bfit u_i \cdot \bfit p \}_{i=1}^k \cup \{ b_j + \bfit v_j \cdot \bfit p \}_{j=1}^l$ is attained at $i=1, \ldots, k'$ and $j=1, \ldots, l'$.
Then $\inp(P \oplus Q) = \inp(P) \oplus \inp(Q)$.

We next consider the initial form of
$$P \odot Q = \bigoplus_{i=1}^k \bigoplus_{j=1}^l (a_i+b_j) \odot \bfit x^{\bfit u_i + \bfit v_j}$$
at $\bfit p$.
Since $a_i + b_j + (\bfit u_i + \bfit v_j) \cdot \bfit p = (a_i + \bfit u_i \cdot \bfit p) + (b_j + \bfit v_j \cdot \bfit p)$, the maximum of $\{ a_i + b_j + (\bfit u_i + \bfit v_j) \cdot \bfit p \}_{i,j}$ is attained at $(i, j)$ for any $i=1, \ldots, k'$ and $j=1, \ldots, l'$.
Thus
\[ \inp(P \odot Q) = \bigoplus_{i=1}^{k'} \bigoplus_{j=1}^{l'} (a_i+b_j) \odot \bfit x^{\bfit u_i + \bfit v_j} = \inp(P) \odot \inp(Q). \qedhere \]
\end{proof}

\begin{lem}
For any $P \in \Txpm$, $\inp(P)(\bfit p) = P(\bfit p)$.
Moreover, for any $P, Q \in \Txpm$ such that $\inp(P) = \inp(Q)$, we have $P(\bfit p) = Q(\bfit q)$.
\end{lem}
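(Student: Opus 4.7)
The plan is to unfold the definitions; both claims reduce immediately to the observation that $\inp(P)$ is built precisely from those monomials that realize the maximum defining $P(\bfit p)$.

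First I would write $P = \bigoplus_{i=1}^k a_i \odot \bfit x^{\bfit u_i}$, so that by definition $P(\bfit p) = \max_i \{ a_i + \bfit u_i \cdot \bfit p \}$. Let $M := P(\bfit p)$ and let $S = \{ i : a_i + \bfit u_i \cdot \bfit p = M \}$; note $S \neq \emptyset$ because the maximum is attained. By definition, $\inp(P) = \bigoplus_{i \in S} a_i \odot \bfit x^{\bfit u_i}$, which is nonempty (so the expression makes sense as a tropical Laurent polynomial). Evaluating at $\bfit p$,
\[
\inp(P)(\bfit p) \;=\; \max_{i \in S} \{ a_i + \bfit u_i \cdot \bfit p \} \;=\; M \;=\; P(\bfit p),
\]
which is the first statement.

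The second statement is then an immediate consequence: if $\inp(P) = \inp(Q)$, then
\[
P(\bfit p) \;=\; \inp(P)(\bfit p) \;=\; \inp(Q)(\bfit p) \;=\; Q(\bfit p),
\]
using the first claim applied to $P$ and to $Q$ (here $Q(\bfit q)$ in the statement is clearly a typo for $Q(\bfit p)$).

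There is no real obstacle; the only subtlety is to be careful that $S$ is nonempty so that $\inp(P)$ is a well-defined tropical Laurent polynomial whose value at $\bfit p$ can be read off as a max over $S$. Since $\bfit p$ is fixed and $P$ has only finitely many terms, the maximum is always attained, so this is automatic.
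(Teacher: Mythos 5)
Your proof is correct and follows the same route as the paper, which simply observes that both claims are immediate from the definition of initial forms; you have just spelled out the details (and correctly identified that $Q(\bfit q)$ in the statement should read $Q(\bfit p)$).
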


\begin{proof}
The former immediately follows from the definition of initial forms.
The latter follows from the former.
\end{proof}

\begin{prop}
The relation $\Ker(\inp)$ is a congruence.
\end{prop}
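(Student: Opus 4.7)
The plan is to verify the four congruence axioms (C1)--(C4). Axioms (C1)--(C3) are immediate: $\Ker(\inp)$ is the equivalence relation pulled back from equality on $\Txpm$ under the set-theoretic map $\inp$, so reflexivity, symmetry and transitivity are automatic. The whole content of the proposition is condition (C4), that $\Ker(\inp)$ is compatible with $\oplus$ and $\odot$.

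For multiplication, there is essentially nothing to do: Lemma \ref{lem:inp add mult} gives $\inp(P \odot Q) = \inp(P) \odot \inp(Q)$, so if $\inp(P_1) = \inp(Q_1)$ and $\inp(P_2) = \inp(Q_2)$, then
\[ \inp(P_1 \odot P_2) = \inp(P_1) \odot \inp(P_2) = \inp(Q_1) \odot \inp(Q_2) = \inp(Q_1 \odot Q_2), \]
so $(P_1 \odot P_2, Q_1 \odot Q_2) \in \Ker(\inp)$.

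For addition, the subtle point is that the formula for $\inp(P \oplus Q)$ in Lemma \ref{lem:inp add mult} is split into three cases according to the comparison between $P(\bfit p)$ and $Q(\bfit p)$. So I would first invoke the lemma immediately preceding the proposition, which states that $\inp(R) = \inp(S)$ implies $R(\bfit p) = S(\bfit p)$. Thus, assuming $\inp(P_1) = \inp(Q_1)$ and $\inp(P_2) = \inp(Q_2)$, we get $P_1(\bfit p) = Q_1(\bfit p)$ and $P_2(\bfit p) = Q_2(\bfit p)$, so the comparison between $P_1(\bfit p)$ and $P_2(\bfit p)$ is exactly the same as the comparison between $Q_1(\bfit p)$ and $Q_2(\bfit p)$. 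Then I split into the three cases $P_1(\bfit p) > P_2(\bfit p)$, $P_1(\bfit p) < P_2(\bfit p)$, $P_1(\bfit p) = P_2(\bfit p)$ and apply Lemma \ref{lem:inp add mult} to both sides; in each case the two initial forms $\inp(P_1 \oplus P_2)$ and $\inp(Q_1 \oplus Q_2)$ get computed by the same clause of the lemma and agree by hypothesis.

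There is no real obstacle here; the proof is a direct bookkeeping exercise, and the only point that deserves comment is that the addition case requires knowing $P_i(\bfit p) = Q_i(\bfit p)$ so that both sides of the would-be equality fall into the same case of Lemma \ref{lem:inp add mult}. That is exactly what the preceding lemma provides.
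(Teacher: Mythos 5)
Your proof is correct and follows essentially the same route as the paper's: multiplicativity is immediate from Lemma \ref{lem:inp add mult}, and for addition you first use the preceding lemma to conclude that the congruent polynomials have equal values at $\bfit p$, so that both sums fall into the same case of Lemma \ref{lem:inp add mult}. Nothing is missing.
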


\begin{proof}
Let $P_1, P_2, Q_1, Q_2 \in \Txpm$ be tropical Laurent polynomials such that the pairs $(P_1, P_2), (Q_1, Q_2)$ are in $\Ker(\inp)$.

We show that $(P_1 \oplus Q_1, P_2 \oplus Q_2) \in \Ker(\inp)$.
Note that $P_1(\bfit p) = P_2(\bfit p)$ and $Q_1(\bfit p) = Q_2(\bfit p)$ by the previous lemma.
We use Lemma \ref{lem:inp add mult}.
If $P_1(\bfit p) > Q_1(\bfit p)$,
$$\inp(P_1 \oplus Q_1) = \inp(P_1) = \inp(P_2) = \inp(P_2 \oplus Q_2).$$
The case $P_1(\bfit p) < Q_1(\bfit p)$ is similar.
If $P_1(\bfit p) = Q_1(\bfit p)$,
$$\inp(P_1 \oplus Q_1) = \inp(P_1) \oplus \inp(Q_1) = \inp(P_2) \oplus \inp(Q_2) = \inp(P_2 \oplus Q_2).$$
Hence $(P_1 \oplus Q_1, P_2 \oplus Q_2) \in \Ker(\inp)$.

We also have $(P_1 \odot Q_1, P_2 \odot Q_2) \in \Ker(\inp)$ since
\[ \inp(P_1 \odot Q_1) = \inp(P_1) \odot \inp(Q_1) = \inp(P_2) \odot \inp(Q_2) = \inp(P_2 \odot Q_2). \qedhere \]
\end{proof}

By the above proposition, the quotient set $\Txpm / \Ker(\inp)$ has a semiring structure.
Via the bijection $\Txpm / \Ker(\inp) \to \Im(\inp)$, the semiring structure of $\Im(\inp)$ is defined.
We denote by $\oplus_{\bfit p}$ and $\odot_{\bfit p}$ the addition and multiplication on $\Im(\inp)$ defined as above.
That is, for any $P, Q \in \Txpm$,
$$\inp(P) \oplus_{\bfit p} \inp(Q) = \inp(P \oplus Q), \qquad \inp(P) \odot_{\bfit p} \inp(Q) = \inp(P \odot Q).$$

In the rest of paper, $\Im(\inp)$ always means the semiring $(\Im(\inp), \oplus_{\bfit p}, \odot_{\bfit p})$.

\begin{lem}
\label{lem:inpinp}
For any $P \in \Txpm$,
\begin{enumerate}[$(1)$]
\item $\inp(\inp (P)) = \inp(P)$, and
\item $P \in \Im(\inp)$ if and only if $\inp(P)=P$.
\end{enumerate}
\end{lem}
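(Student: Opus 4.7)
The plan is to prove (1) directly from the definition of the initial form, and then derive (2) as a quick consequence of (1).

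For (1), I would write $P = \bigoplus_{\bfit u} a_{\bfit u} \odot \bfit x^{\bfit u}$, so that $\inp(P) = \bigoplus_{\bfit u \in S} a_{\bfit u} \odot \bfit x^{\bfit u}$, where $S = \{ \bfit u \mid a_{\bfit u} + \bfit u \cdot \bfit p = P(\bfit p) \}$. To compute $\inp(\inp(P))$, I need to identify which terms of $\inp(P)$ attain the maximum of $a_{\bfit u} + \bfit u \cdot \bfit p$ over $\bfit u \in S$. But by construction every $\bfit u \in S$ yields the same value $P(\bfit p)$, and by the preceding lemma this common value equals $\inp(P)(\bfit p)$. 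Hence every term of $\inp(P)$ is a maximizing term, so $\inp(\inp(P)) = \inp(P)$.

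For (2), the implication $\inp(P) = P \Rightarrow P \in \Im(\inp)$ is immediate by taking $P$ itself as a preimage. For the converse, if $P \in \Im(\inp)$, choose $Q \in \Txpm$ with $P = \inp(Q)$; then (1) applied to $Q$ gives $\inp(P) = \inp(\inp(Q)) = \inp(Q) = P$.

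There is no real obstacle here; both parts are essentially a bookkeeping check that the map $\inp$ is idempotent, which is a direct consequence of the definition. The only subtle point worth noting is that one must use the equality $\inp(P)(\bfit p) = P(\bfit p)$ from the preceding lemma to conclude that the ``maximum value'' stays the same when we restrict from $P$ to $\inp(P)$, so that the set of maximizing terms does not shrink.
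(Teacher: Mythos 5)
Your proof is correct and follows essentially the same route as the paper: part (2) is argued identically via a preimage $Q$ with $P = \inp(Q)$, and for part (1) you simply spell out in detail what the paper dismisses as ``clear by the definition of initial forms,'' correctly noting that all terms of $\inp(P)$ attain the common value $P(\bfit p) = \inp(P)(\bfit p)$ and hence all remain maximizing.
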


\begin{proof}
(1) is clear by the definition of initial forms.
We show (2).
If $P \in \Im(\inp)$, $\inp(Q) = P$ for some $Q \in \Txpm$.
Then
$$\inp(P) = \inp(\inp(Q)) = \inp(Q) = P.$$
The converse is clear.
\end{proof}

\begin{prop}
For any $P,Q \in \Im(\inp)$,
$$P \oplus_{\bfit p} Q =
\begin{cases}
P &(P(\bfit p) > Q(\bfit p))\\
Q &(P(\bfit p) < Q(\bfit p))\\
P \oplus Q &(P(\bfit p) = Q(\bfit p)),
\end{cases}$$
and
$$P \odot_{\bfit p} Q = P \odot Q.$$
In particular, $\odot_{\bfit p}$ is same as $\odot$.
\end{prop}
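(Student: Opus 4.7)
The plan is to unwind the definitions of $\oplus_{\bfit p}$ and $\odot_{\bfit p}$ and then directly invoke Lemma \ref{lem:inp add mult}. The key observation is that by Lemma \ref{lem:inpinp}(2), every $P \in \Im(\inp)$ satisfies $\inp(P) = P$, so the formulas defining $\oplus_{\bfit p}, \odot_{\bfit p}$ apply to $P, Q$ themselves without needing to write them as $\inp$ of some auxiliary polynomials.

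First I would compute $P \odot_{\bfit p} Q$. By definition, $\inp(P) \odot_{\bfit p} \inp(Q) = \inp(P \odot Q)$, and since $\inp(P) = P$ and $\inp(Q) = Q$, this reads $P \odot_{\bfit p} Q = \inp(P \odot Q)$. By Lemma \ref{lem:inp add mult}, $\inp(P \odot Q) = \inp(P) \odot \inp(Q) = P \odot Q$, which establishes the multiplication formula.

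Next I would compute $P \oplus_{\bfit p} Q$. Again by definition, $P \oplus_{\bfit p} Q = \inp(P \oplus Q)$. Now apply Lemma \ref{lem:inp add mult} case by case: if $P(\bfit p) > Q(\bfit p)$, then $\inp(P \oplus Q) = \inp(P) = P$; if $P(\bfit p) < Q(\bfit p)$, similarly $\inp(P \oplus Q) = Q$; and if $P(\bfit p) = Q(\bfit p)$, then $\inp(P \oplus Q) = \inp(P) \oplus \inp(Q) = P \oplus Q$. This gives the stated piecewise formula.

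There is no real obstacle here, since both Lemma \ref{lem:inp add mult} and Lemma \ref{lem:inpinp} are already available; the proof is essentially a two-line verification once one notices $\inp(P) = P$ for $P \in \Im(\inp)$. The only thing to remark on is the final sentence: the identity $P \odot_{\bfit p} Q = P \odot Q$ says that the multiplication inherited by $\Im(\inp)$ from the quotient semiring structure agrees with the ambient multiplication on $\Txpm$, so one could equivalently describe $\Im(\inp)$ as a set closed under $\odot$ with a modified addition.
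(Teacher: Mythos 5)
Your proof is correct and is exactly the argument the paper intends: the paper simply states that the proposition "immediately follows from Lemma \ref{lem:inp add mult} and Lemma \ref{lem:inpinp}," and your write-up spells out precisely that deduction, using $\inp(P)=P$ for $P\in\Im(\inp)$ and then applying the addition/multiplication formulas for initial forms.
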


\begin{proof}
This immediately follows from Lemma \ref{lem:inp add mult} and Lemma \ref{lem:inpinp}
\end{proof}

We show that $\Im(\inp)$ is isomorphic to the $\mathbb T$-extension of $\Bxpm$.
For any tropical Laurent polynomial $P = \bigoplus_i a_i \odot \bfit x^{\bfit u_i} \in \Txpm$, we denote $P_{\mathbb B} := \bigoplus_i \bfit x^{\bfit u_i} \in \Bxpm$.
It is easily checked that
$$(P \oplus Q)_{\mathbb B} = P_{\mathbb B} \oplus Q_{\mathbb P}, \qquad (P \odot Q)_{\mathbb B} = P_{\mathbb B} \odot Q_{\mathbb B}.$$
We use the following lemma.

\begin{lem}
\label{inpP1}
For any $P \in \Im(\inp)$,
$$P(\bm x) = P(\bfit p) \odot P_{\mathbb B} (\bfit x - \bfit p).$$
\end{lem}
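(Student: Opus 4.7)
The plan is to unpack both sides of the asserted identity as max-plus expressions in the coordinates of $\bfit x$ and compare coefficients term-by-term.

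First, I would use Lemma \ref{lem:inpinp}(2) to rewrite the hypothesis $P \in \Im(\inp)$ as $\inp(P) = P$. Writing $P = \bigoplus_{i=1}^{k} a_i \odot \bfit x^{\bfit u_i}$, the equality $\inp(P) = P$ forces every term to attain the maximum defining $P(\bfit p)$; that is, $a_i + \bfit u_i \cdot \bfit p = P(\bfit p)$ for each $i$. Equivalently, $a_i = P(\bfit p) - \bfit u_i \cdot \bfit p$. This is the single structural fact that drives the rest of the proof.

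Next, I would evaluate both sides at an arbitrary $\bfit x \in \mathbb R^n$. The left side is $P(\bfit x) = \max_i \{ a_i + \bfit u_i \cdot \bfit x \}$. For the right side, by definition $P_{\mathbb B} = \bigoplus_i \bfit x^{\bfit u_i}$, so $P_{\mathbb B}(\bfit x - \bfit p) = \max_i \{ \bfit u_i \cdot (\bfit x - \bfit p) \} = \max_i \{ \bfit u_i \cdot \bfit x - \bfit u_i \cdot \bfit p \}$, and hence
\[
P(\bfit p) \odot P_{\mathbb B}(\bfit x - \bfit p) = P(\bfit p) + \max_i \{ \bfit u_i \cdot \bfit x - \bfit u_i \cdot \bfit p \} = \max_i \{ (P(\bfit p) - \bfit u_i \cdot \bfit p) + \bfit u_i \cdot \bfit x \}.
\]
Substituting $a_i = P(\bfit p) - \bfit u_i \cdot \bfit p$ from the first step, the right side becomes $\max_i \{ a_i + \bfit u_i \cdot \bfit x \}$, which is exactly $P(\bfit x)$.

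There is no real obstacle here; the statement is essentially a reformulation of the definition of $\inp$, once one recognizes that the condition $\inp(P) = P$ pins down each coefficient $a_i$ in terms of $P(\bfit p)$ and $\bfit u_i \cdot \bfit p$. The only point worth flagging is whether the claimed equality is meant as an equality of polynomials in $\Txpm$ or of functions in $\Txpmfcn$; since the argument above shows pointwise equality for all $\bfit x \in \mathbb R^n$, the identity holds as functions, which is what the ambient discussion needs.
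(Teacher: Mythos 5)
Your proof is correct and is essentially the paper's: both rest on the single observation that $P \in \Im(\inp)$ forces $a_i + \bfit u_i \cdot \bfit p = P(\bfit p)$ for every $i$, and then match the coefficient of $\bfit x^{\bfit u_i}$ on the two sides. One small correction to your closing remark: the lemma is invoked in Proposition \ref{TtoB} to show $\Psi \circ \Phi = \id$ on $\Im(\inp)$, whose elements are formal Laurent polynomials (distinct elements of $\Im(\inp)$ can define the same function, e.g.\ $0 \oplus x \oplus x^2$ and $0 \oplus x^2$ at $\bfit p = \mathbf 0$), so the equality is in fact needed in $\Txpm$ and not merely in $\Txpmfcn$; fortunately your computation already delivers this, since the substitution $a_i = P(\bfit p) - \bfit u_i \cdot \bfit p$ identifies the two sides term by term rather than just pointwise.
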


\begin{proof} 
Let $P(\bfit x)= \bigoplus_{i=1}^k a_i \odot \bfit x^{\bfit u_i}$ and $\bfit u_i = (u_{i1}, \ldots, u_{in})$ for any $i=1, \ldots, k$.
Since $P \in \Im(\inp)$, $P(\bfit p) = a_i + \bfit u_i \cdot \bfit p$ for any $i=1, \ldots, k$.
Thus
\begin{align*}
P(\bfit p) \odot P_{\mathbb B} (\bfit x - \bfit p) &= \bigoplus_{i=1}^k P(\bfit p) \odot ((-p_1) \odot x_1)^{u_{i1}} \odot \cdots \odot  ((-p_n) \odot x_n)^{u_{in}}\\
&= \bigoplus_{i=1}^k P(\bfit p) \odot (-\bfit p)^{\bfit u_i} \odot \bfit x^{\bfit u_i}\\
&= \bigoplus_{i=1}^k (a_i + \bfit u_i \cdot \bfit p + \bfit u_i \cdot (- \bfit p)) \odot \bfit x^{\bfit u_i}\\
&= \bigoplus_{i=1}^k a_i \odot \bfit x^{\bfit u_i} = P(\bfit x). \qedhere
\end{align*}
\end{proof}

\begin{prop}
\label{TtoB}
The map
$$\Phi:\Im(\inp) \to \Bxpm \times_e \mathbb T, \quad P \mapsto \begin{cases}
(P_{\mathbb B}, P(\bfit p)) &\text{ if } P \neq -\infty \\
-\infty &\text{ if } P = -\infty
\end{cases}$$
is an isomorphism.
\end{prop}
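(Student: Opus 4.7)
The plan is to verify that $\Phi$ is a semiring homomorphism and then exhibit an explicit two-sided inverse.

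First I would check the homomorphism axioms. The element $-\infty$ is the additive identity on both sides and absorbs under multiplication on both sides, so the defining cases involving $-\infty$ hold by definition. The multiplicative identity in $\Im(\inp)$ is the constant polynomial $0 = 0 \odot \bfit x^{\mathbf 0}$, whose image is $(\bfit x^{\mathbf 0}, 0) = 1_{\Bxpm \times_e \mathbb T}$. For $P, Q \in \Im(\inp) \setminus \{-\infty\}$, using the previous proposition that $\odot_{\bfit p} = \odot$, together with the easy identities $(P \odot Q)_{\mathbb B} = P_{\mathbb B} \odot Q_{\mathbb B}$ and $(P \odot Q)(\bfit p) = P(\bfit p) + Q(\bfit p)$, we get $\Phi(P \odot_{\bfit p} Q) = \Phi(P) \cdot \Phi(Q)$. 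For the addition, I would split into the three cases according to whether $P(\bfit p)$ is greater than, less than, or equal to $Q(\bfit p)$, matching each case of the previous proposition with the definition of $+$ on $R \times_e \mathbb T$ (Definition \ref{Textdef}); when $P(\bfit p) = Q(\bfit p)$ the identity $(P \oplus Q)_{\mathbb B} = P_{\mathbb B} \oplus Q_{\mathbb B}$ does the job.

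Next I would produce the inverse map. Given $(f, a) \in (\Bxpm \setminus \{-\infty\}) \times \mathbb R$ with $f = \bigoplus_{i=1}^{k} \bfit x^{\bfit u_i}$, set
\[
\Psi(f, a) := a \odot f(\bfit x - \bfit p) = \bigoplus_{i=1}^{k} (a - \bfit u_i \cdot \bfit p) \odot \bfit x^{\bfit u_i},
\]
and $\Psi(-\infty) = -\infty$. Every term of $\Psi(f,a)$ evaluates to $a$ at $\bfit p$, so every term attains the maximum and $\inp(\Psi(f,a)) = \Psi(f,a)$; by Lemma \ref{lem:inpinp}(2) this means $\Psi(f,a) \in \Im(\inp)$. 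By construction $\Psi(f,a)_{\mathbb B} = f$ and $\Psi(f,a)(\bfit p) = a$, so $\Phi \circ \Psi = \mathrm{id}$, which gives surjectivity of $\Phi$.

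For injectivity, suppose $\Phi(P) = \Phi(Q)$ with $P, Q \in \Im(\inp) \setminus \{-\infty\}$. Then $P_{\mathbb B} = Q_{\mathbb B}$ and $P(\bfit p) = Q(\bfit p)$, and Lemma \ref{inpP1} gives
\[
P(\bfit x) = P(\bfit p) \odot P_{\mathbb B}(\bfit x - \bfit p) = Q(\bfit p) \odot Q_{\mathbb B}(\bfit x - \bfit p) = Q(\bfit x),
\]
so $\Psi \circ \Phi = \mathrm{id}$ as well. I do not expect a genuine obstacle here: the only place requiring attention is the bookkeeping in the addition case, where one must carefully align the three-case formula for $\oplus_{\bfit p}$ with the three-case formula defining addition on the $\mathbb T$-extension, and the surjectivity step relies on the key observation that translating a Boolean polynomial via $f(\bfit x - \bfit p)$ automatically lands in $\Im(\inp)$.
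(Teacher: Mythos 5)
Your proposal is correct and follows essentially the same route as the paper: the same case-by-case verification of the homomorphism axioms via Lemma \ref{lem:inp add mult}, the same inverse map $\Psi(f,a) = a \odot f(\bfit x - \bfit p)$ with the same observation that every term of $\Psi(f,a)$ attains the maximum at $\bfit p$, and the same use of Lemma \ref{inpP1} to close the loop. No issues.
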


\begin{proof}
Step1. $\Phi$ is a homomorphism.

We first show $\Phi(P \oplus_{\bfit p} Q) = \Phi(P) + \Phi(Q)$ for any $P, Q \in \Im(\inp)$.
The case $P=-\infty$ or $Q=-\infty$ is clear.
Assume $P \neq -\infty$ and $Q \neq -\infty$.
If $P(\bfit p) > Q(\bfit p)$, then
$$\Phi(P \oplus_{\bfit p} Q) = \Phi(P) = (P_{\mathbb B}, P(\bfit p)) = (P_{\mathbb B}, P(\bfit p)) + (Q_{\mathbb B}, Q(\bfit p)) = \Phi(P) + \Phi(Q).$$
The case $P(\bfit p) < Q(\bfit p)$ is similar.
If $P(\bfit p) = Q(\bfit p)$, then
$$\begin{aligned}
\Phi(P \oplus_{\bfit p} Q) &= \Phi(P \oplus Q) = ((P \oplus Q)_{\mathbb B}, (P \oplus Q)(\bfit p)) = (P_{\mathbb B}, P(\bfit p)) + (Q_{\mathbb B}, Q(\bfit p)) \\
&= \Phi(P) + \Phi(Q).
\end{aligned}$$
We next show that $\Phi(P \odot Q) = \Phi(P) \cdot \Phi(Q)$ for any $P, Q \in \Im(\inp)$.
The case $P=-\infty$ or $Q=-\infty$ is clear.
Assume $P \neq -\infty$ and $Q \neq -\infty$.
Thus
$$\Phi(P \odot Q) = ((P \odot Q)_{\mathbb B}, P(\bfit p) \odot Q(\bfit p)) = (P_{\mathbb B}, P(\bfit p)) \cdot (Q_{\mathbb B}, Q(\bfit p)) = \Phi(P) \cdot \Phi(Q).$$
Finally, $\Phi(0) = (0, 0)$.
Therefore $\Phi$ is a semiring homomorphism.\\

\noindent
Step2. $\Phi$ is bijective.

Consider the map
$$\Psi: \Bxpm \times_e \mathbb T \to \Im(\inp), \quad (P(\bfit x), a) \mapsto a \odot P(\bfit x - \bfit p), \quad -\infty \mapsto -\infty.$$
We now check that $a \odot P(\bfit x - \bfit p)$ is in $\Im(\inp)$ for any $P \in \Bxpm \smallsetminus \{ -\infty \}$ and $a \in \mathbb R$.
The all coefficients of the terms of $P$ are 0.
Thus any term of $P(\bfit x - \bfit p)$ is of the form
$((-p_1) \odot x_1)^{u_1} \odot \cdots \odot (-p_n) \odot x_n)^{u_n}.$
When we substitute $\bfit x = \bfit p$ for each terms, all the values are 0.
It means that $\inp(P(\bfit x - \bfit p)) = P(\bfit x - \bfit p)$, and then $a
 \odot P(\bfit x - \bfit p) = \inp(a \odot P(\bfit x - \bfit p)) \in \Im(\inp)$.

We show that $\Psi$ is the inverse map of $\Phi$.
It is obvious that $\Psi(\Phi(-\infty)) = -\infty$ and $\Phi(\Psi(-\infty)) = -\infty$.
For any $P \in \Im(\inp) \smallsetminus \{ -\infty \}$,
$$\Psi(\Phi(P)) = \Psi((P_{\mathbb B}, P(\bfit p))) = P(\bfit p) \odot P_{\mathbb B}(\bfit x - \bfit p) = P(\bfit x),$$
where we use Lemma \ref{inpP1}.
For any $(P, a) \in (\Bxpm \times_e \mathbb T) \smallsetminus \{ -\infty \}$,
$$\Psi(\Phi((P, a))) = \Psi(a \odot P(\bfit x - \bfit p)) = ((a \odot P(\bfit x - \bfit p))_{\mathbb B}, a \odot P({\mathbf 0})) = ((a \odot P(\bfit x - \bfit p))_{\mathbb B}, a).$$
The remaining thing to show is $(a \odot P(\bfit x - \bfit p))_{\mathbb B} = P$.
Since $P(\bfit x - \bfit p) = P(x_1 \odot (-p_1), \ldots, x_n \odot (-p_n))$, $P$ and $a \odot P(\bfit x - \bfit p)$ coincide except for coefficients.
This means that $(a \odot P(\bfit x - \bfit p))_{\mathbb B} = P$.
\end{proof}

Later we use the following lemma.

\begin{lem}
\label{lem:cong on im}
Let $\mathcal C$ be a congruence on $\Txpm$ such that $\Ker(\inp) \subset \mathcal C$.
Let $\mathcal C'$ be the congruence on $\Im(\inp)$ corresponding to $\mathcal C / \Ker(\inp)$ via the isomorphism $\Txpm / \Ker(\inp) \to \Im(\inp)$.
Then $\mathcal C' = \mathcal C \cap (\Im(\inp))^2$, and hence $\Txpm / \mathcal C$ is isomorphic to $\Im(\inp) / (\mathcal C \cap (\Im(\inp))^2)$.
\end{lem}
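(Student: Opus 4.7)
The plan is to unpack the definitions and verify the set equality by two inclusions, then assemble the isomorphism from the two canonical isomorphisms already recorded in the preliminaries. Since $\Ker(\inp) \subset \mathcal C$, the quotient congruence $\mathcal C/\Ker(\inp)$ on $\Txpm/\Ker(\inp)$ is defined as in the preliminaries, and transporting it across the bijection $\Txpm/\Ker(\inp) \to \Im(\inp)$ induced by $\inp$ (which sends $[P] \mapsto \inp(P)$) gives
$$\mathcal C' = \{ (\inp(P), \inp(Q)) \mid (P, Q) \in \mathcal C \}.$$

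To show $\mathcal C' \subset \mathcal C \cap (\Im(\inp))^2$, I would take $(P, Q) \in \mathcal C$ and use Lemma \ref{lem:inpinp}(1), which says $\inp(\inp(P)) = \inp(P)$, to conclude $(P, \inp(P)) \in \Ker(\inp) \subset \mathcal C$, and similarly for $Q$. Transitivity of $\mathcal C$ then gives $(\inp(P), \inp(Q)) \in \mathcal C$, and by construction both entries lie in $\Im(\inp)$. For the reverse inclusion, suppose $(P, Q) \in \mathcal C \cap (\Im(\inp))^2$. Lemma \ref{lem:inpinp}(2) gives $P = \inp(P)$ and $Q = \inp(Q)$, so $(P, Q) = (\inp(P), \inp(Q))$ lies in $\mathcal C'$ by definition.

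Finally, for the isomorphism statement, I would compose the canonical isomorphism $\Txpm/\mathcal C \to (\Txpm/\Ker(\inp))/(\mathcal C/\Ker(\inp))$ recalled at the end of the semirings subsection with the isomorphism $(\Txpm/\Ker(\inp))/(\mathcal C/\Ker(\inp)) \to \Im(\inp)/\mathcal C'$ induced by $\inp$, and then substitute $\mathcal C' = \mathcal C \cap (\Im(\inp))^2$ from the first part. There is no real obstacle here: the lemma is essentially bookkeeping between two equivalent descriptions of the same quotient. The only subtle point is noticing that $(P, \inp(P)) \in \Ker(\inp)$ via Lemma \ref{lem:inpinp}(1), which is what allows the containment $\mathcal C' \subset \mathcal C$ to be promoted to an equality on $(\Im(\inp))^2$.
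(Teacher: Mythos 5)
Your proof is correct and follows essentially the same route as the paper: both inclusions rest on the observation that $\inp(\inp(P)) = \inp(P)$ puts $(P,\inp(P))$ in $\Ker(\inp) \subset \mathcal C$, and the reverse inclusion uses $P = \inp(P)$ for $P \in \Im(\inp)$. The final assembly of the isomorphism from the two canonical quotient isomorphisms matches the paper's ``hence'' step.
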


\begin{proof}
If $(P,Q) \in \mathcal C'$, there exist $P_0, Q_0 \in \Txpm$ such that $P = \inp(P_0)$, $Q = \inp(Q_0)$ and $(P_0,Q_0) \in \mathcal C$.
Note that $(P, P_0) \in \Ker(\inp)$ because $\inp(P) = \inp(\inp(P_0)) = \inp (P_0)$.
Hence $(P, P_0) \in \mathcal C$.
Similarly $(Q, Q_0) \in \mathcal C$.
Therefore $P \simwrt{\mathcal C} P_0 \simwrt{\mathcal C} Q_0 \simwrt{\mathcal C} Q$, which means that $(P,Q) \in \mathcal C \cap (\Im(\inp))^2$.

Conversely, if $(P,Q) \in \mathcal C \cap (\Im(\inp))^2$, then $(P,Q) \in \mathcal C'$ because $\inp(P) = P$, $\inp(Q) = Q$ and $(P,Q) \in \mathcal C$.
\end{proof}

\subsection{Local theory of tropical Laurent polynomials}

We show that $\Txpmfcn / \Cp$ is isomorphic to the $\mathbb T$-extension of $\Bxpmfcn$.
Let $\pi : \Txpm \to \Txpmfcn$ be the canonical surjection.
In order to use Proposition \ref{TtoB}, we first show that $\pi^{-1}(\Cp) \supset \Ker(\inp)$.

\begin{lem}
\label{PandinpP}
For any tropical Laurent polynomial $P \in \Txpm$, there exists an open neighborhood $U$ of $\bfit p$ such that $[P]|_U = [\inp(P)]|_U$.
Moreover, for any tropical Laurent polynomials $P_1, \ldots, P_k \in \Txpm$, there exists an open neighborhood $U$ of $\bfit p$ such that $[P_i]|_U = [\inp(P_i)]|_U$ for any $i=1, \ldots, k$.
\end{lem}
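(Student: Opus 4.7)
The plan is to exploit the continuity of the affine functions $\bfit x \mapsto a_{\bfit u} + \bfit u \cdot \bfit x$ that underlie each monomial of a tropical Laurent polynomial: since $\inp(P)$ collects precisely those monomials achieving the maximum at $\bfit p$, every other monomial is strictly dominated at $\bfit p$, and this strict domination survives on a whole neighborhood of $\bfit p$.

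Concretely, I would write $P = \bigoplus_{i=1}^m a_i \odot \bfit x^{\bfit u_i}$ and split the index set as $I_{\max} = \{ i : a_i + \bfit u_i \cdot \bfit p = P(\bfit p) \}$ and $I_{<} = \{ i : a_i + \bfit u_i \cdot \bfit p < P(\bfit p) \}$, so that $\inp(P) = \bigoplus_{i \in I_{\max}} a_i \odot \bfit x^{\bfit u_i}$. If $I_{<} = \emptyset$ there is nothing to prove since $P = \inp(P)$ as polynomials, and any $U$ works. Otherwise, for each pair $(i,j) \in I_{<} \times I_{\max}$ the affine function $\bfit x \mapsto (a_j + \bfit u_j \cdot \bfit x) - (a_i + \bfit u_i \cdot \bfit x)$ is continuous and strictly positive at $\bfit p$, so it remains positive on some open neighborhood $U_{ij}$ of $\bfit p$. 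Taking $U = \bigcap_{(i,j) \in I_< \times I_{\max}} U_{ij}$, which is a finite intersection and hence an open neighborhood of $\bfit p$, every $j \in I_{\max}$ satisfies $a_j + \bfit u_j \cdot \bfit x > a_i + \bfit u_i \cdot \bfit x$ for all $\bfit x \in U$ and all $i \in I_<$.

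Taking maxima over $j \in I_{\max}$ on $U$ gives $\inp(P)(\bfit x) > a_i + \bfit u_i \cdot \bfit x$ for each $i \in I_<$, and combining with the obvious equality $\inp(P)(\bfit x) \leq P(\bfit x)$, we obtain $P(\bfit x) = \max_i\{ a_i + \bfit u_i \cdot \bfit x \} = \inp(P)(\bfit x)$ for every $\bfit x \in U$. This is precisely $[P]|_U = [\inp(P)]|_U$. For the moreover part, applying the first part to each $P_1, \ldots, P_k$ yields open neighborhoods $U_1, \ldots, U_k$ of $\bfit p$ on which $[P_i]|_{U_i} = [\inp(P_i)]|_{U_i}$, and $U := U_1 \cap \cdots \cap U_k$ is again an open neighborhood of $\bfit p$ with the desired simultaneous property.

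There is no real obstacle here; the content is purely the elementary fact that strict inequalities between finitely many continuous functions persist locally. The only thing that requires care is handling the degenerate case $I_< = \emptyset$ (where $\inp(P) = P$ literally) and the treatment of $-\infty$ coefficients, which causes no trouble since monomials with $a_i = -\infty$ contribute $-\infty$ to every value and may be deleted from the sum without changing the defined function.
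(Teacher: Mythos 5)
Your proof is correct and follows essentially the same route as the paper: both arguments observe that the non-maximal terms are strictly dominated at $\bfit p$ and use continuity to propagate this strict inequality to a neighborhood, then intersect finitely many neighborhoods for the simultaneous statement. The only difference is cosmetic (you compare monomials pairwise, while the paper groups the non-maximal terms into a single polynomial $P'$ and compares $[\inp(P)]$ with $[P']$); also note the minor slip where you call $\inp(P)(\bfit x) \leq P(\bfit x)$ an ``equality'' rather than an inequality.
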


\begin{proof}
Let $P= \bigoplus_{i=1}^k a_i \odot \bfit x^{\bfit u_i}$.
We may assume that $\inp (P) = \bigoplus_{i=1}^{k'} a_i \odot \bfit x^{\bfit u_i}$ for some $k'$ without loss of generality.
Let $P' = \bigoplus_{i=k'+1}^k a_i \odot \bfit x^{\bfit u_i}$ be the sum of the terms of $P$ which do not appear in $\inp(P)$.
Thus $P=\inp(P) \oplus P'$.
By the definition of initial forms, we have $\inp(P)(\bfit p) > P'(\bfit p)$.
Since the functions $[\inp(P)]$ and $[P']$ are continuous, there exists an open neighborhood $U$ of $\bfit p$ such that $\inp(P)(\bfit q) > P'(\bfit q)$ for any $\bfit q \in U$.
Hence $[P]|_U = [\inp(P) \oplus P']|_U = [\inp(P)]|_U$.

For $P_1, \ldots, P_k \in \Txpm$, there exists open neighborhoods $U_1, \ldots, U_k$ of $\bfit p$ such that $[P_i]|_{U_i} = [\inp(P_i)]|_{U_i}$ for any $i=1, \ldots, k$.
Let $U = U_1 \cap \cdots \cap U_k$, and then $[P_i]|_U = [\inp(P_i)]|_U$ for any $i=1, \ldots, k$.
\end{proof}

\begin{prop}
Let $P,Q \in \Txpm$ be tropical Laurent polynomials such that $\inp(P) = \inp(Q)$.
Then there exists an open neighborhood $U$ of $\bfit p$ such that $[P]|_U = [Q]|_U$.
In other words, $\pi^{-1}(\Cp) \supset \Ker(\inp)$. 
\end{prop}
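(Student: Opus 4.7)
The plan is very short: this proposition is essentially an immediate consequence of Lemma \ref{PandinpP}, specifically its second (simultaneous) statement. The content of Lemma \ref{PandinpP} is that, for any finite list of tropical Laurent polynomials, one can find a single open neighborhood of $\bfit p$ on which each polynomial agrees with its own initial form as a function.

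So my proof would go as follows. I first apply Lemma \ref{PandinpP} to the pair $P, Q$ (a finite list of size two), obtaining a single open neighborhood $U$ of $\bfit p$ such that
\[
[P]|_U = [\inp(P)]|_U \quad\text{and}\quad [Q]|_U = [\inp(Q)]|_U.
\]
Then by the hypothesis $\inp(P) = \inp(Q)$ (equality of polynomials, hence equality of functions), I conclude
\[
[P]|_U = [\inp(P)]|_U = [\inp(Q)]|_U = [Q]|_U,
\]
which is exactly what is required. The ``in other words'' statement $\pi^{-1}(\Cp) \supset \Ker(\inp)$ is then just a reformulation: a pair $(P,Q)$ with $\inp(P)=\inp(Q)$ has $(\pi(P), \pi(Q)) = ([P],[Q]) \in \Cp$.

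There is no real obstacle here; the whole substance of the argument has been offloaded to Lemma \ref{PandinpP}. The only thing one must be slightly careful about is to apply the \emph{simultaneous} form of that lemma (i.e.\ the second assertion), rather than applying the first assertion separately to $P$ and to $Q$ and then intersecting the two neighborhoods — though of course those two approaches give the same result, since intersections of two open sets are open.
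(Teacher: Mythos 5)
Your proof is correct and is essentially identical to the paper's: the paper likewise invokes Lemma \ref{PandinpP} to obtain a single neighborhood $U$ with $[P]|_U=[\inp(P)]|_U$ and $[Q]|_U=[\inp(Q)]|_U$ and then chains the equalities using $\inp(P)=\inp(Q)$.
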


\begin{proof}
Let $U$ be an open neighborhood of $\bfit p$ such that $[P]|_U = [\inp(P)]|_U$ and $[Q]|_U = [\inp(Q)]|_U$.
Then
\[ [P]|_U = [\inp(P)]|_U = [\inp(Q)]|_U = [Q]|_U. \qedhere \]
\end{proof}

The semiring $\Txpmfcn / \Cp$ is isomorphic to $\Txpm / \pi^{-1}(\Cp)$, which is also isomorphic to
$$\Im(\inp) / (\pi^{-1}(\Cp) \cap (\Im(\inp))^2)$$
by Lemma \ref{lem:cong on im}.
We denote $\Cp' := \pi^{-1}(\Cp) \cap (\Im(\inp))^2$.
In other words,
$$\Cp' = \left\{ (P,Q) \in (\Im(\inp))^2 \ \middle| \ 
\begin{aligned}
&\text{ there exists an open neighborhood $U$ of $\bfit p$ } \\
&\text{ such that }  [P]|_U = [Q]|_U
\end{aligned} \right\}.$$

Since $\Im(\inp)$ is isomorphic to $\Bxpm \times_e \mathbb T$, there exists a congruence $\Cp''$ on $\Bxpm \times_e \mathbb T$ corresponding to $\Cp'$.
Recall that the isomorphism $\Bxpm \times_e \mathbb T \to \Im(\inp)$ we constructed in the proof of Proposition \ref{TtoB} was
$$(P,a) \mapsto a \odot P(\bfit x - \bfit p), \quad -\infty \mapsto -\infty.$$
Thus we obtain the following description:
$$\Cp'' = \left\{ ((P,a),(Q,b)) \ \middle| \ 
\begin{aligned}
&\text{ there exists an open neighborhood $U$ of $\bfit p$ } \\
&\text{ such that }  [a \odot P(\bfit x - \bfit p)]|_U = [b \odot Q(\bfit x - \bfit p)]|_U
\end{aligned} \right\} \cup \{ (-\infty, -\infty) \}.$$
There is a simpler description:

\begin{lem}
\label{lem:Cpdd}
We have
$$\Cp'' = \left\{ ((P, a), (Q,a)) \ \middle| \ 
[P]=[Q] \text{ and } a \in \mathbb R \right\} \cup \{ (-\infty, -\infty) \}.$$
\end{lem}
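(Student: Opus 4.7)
The plan is to prove the two inclusions separately, noting that the $\supset$ direction is essentially immediate: if $[P]=[Q]$ in $\Bxpmfcn$ and $a \in \mathbb R$, then $a \odot P(\bfit x - \bfit p)$ and $a \odot Q(\bfit x - \bfit p)$ define the same function on all of $\mathbb R^n$, so the pair lies in $\mathcal C_{\bfit p}''$ trivially. The same holds for $(-\infty,-\infty)$.

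For the $\subset$ direction, I would first observe that any pair involving exactly one copy of $-\infty$ cannot lie in $\mathcal C_{\bfit p}''$: if $Q \neq -\infty$, then $b \odot Q(\bfit x - \bfit p)$ defines a real-valued function on $\mathbb R^n$, so it cannot agree with the constant $-\infty$ on any neighborhood of $\bfit p$. Hence every non-diagonal element of $\mathcal C_{\bfit p}''$ has the form $((P,a),(Q,b))$ with $P,Q \in \Bxpm \smallsetminus \{-\infty\}$.

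So suppose such a pair lies in $\mathcal C_{\bfit p}''$, giving an open neighborhood $U$ of $\bfit p$ with $a + P(\bfit q - \bfit p) = b + Q(\bfit q - \bfit p)$ for all $\bfit q \in U$. Setting $\bfit q = \bfit p$ and using that every term of a Boolean Laurent polynomial has coefficient $0$ (so $P(\mathbf 0) = Q(\mathbf 0) = 0$) yields $a = b$. After translating by $-\bfit p$, we obtain an open neighborhood $V = U - \bfit p$ of $\mathbf 0$ on which $[P]$ and $[Q]$ agree.

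The main step, and the one place where the Boolean nature of $P$ and $Q$ is essential, is to upgrade this local equality on $V$ to a global equality. Here I would invoke Lemma \ref{Bfcn}: for any $\bfit r \in \mathbb R^n$, choose $t > 0$ small enough that $t \bfit r \in V$, so that $P(t \bfit r) = Q(t \bfit r)$, and then the positive-homogeneity property $f(t\bfit r) = t f(\bfit r)$ from Lemma \ref{Bfcn} gives $t P(\bfit r) = t Q(\bfit r)$, hence $P(\bfit r) = Q(\bfit r)$. This shows $[P] = [Q]$ in $\Bxpmfcn$, completing the proof. I don't anticipate a serious obstacle beyond correctly setting up the translation $\bfit q \mapsto \bfit q - \bfit p$ and remembering that the homogeneity lemma is what converts a neighborhood of $\mathbf 0$ into the entire space.
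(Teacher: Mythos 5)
Your proof is correct and follows essentially the same route as the paper: evaluate at $\bfit x = \bfit p$ to get $a=b$, translate to a neighborhood of $\mathbf 0$, and apply the positive homogeneity of Lemma \ref{Bfcn} to upgrade local to global equality of $[P]$ and $[Q]$. Your explicit handling of pairs involving a single $-\infty$ is a small addition the paper leaves implicit, but the substance is identical.
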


\begin{proof}
It is sufficient to show that, for any $P,Q \in \Bxpm$ and $a,b \in \mathbb R$, the following are equivalent:
\begin{enumerate}[(1)]
\item There exists an  open neighborhood $U$ of $\bfit p$ such that
$$[a \odot P(\bfit x - \bfit p)]|_U = [b \odot Q(\bfit x - \bfit p)]|_U,$$
\item $[P]=[Q]$ and $a=b$.
\end{enumerate}
Obviously (2) implies (1).
Assume (1).
Since $P$ and $Q$ have coefficients in $\mathbb B$, by substituting $\bfit x = \bfit p$ for $[a \odot P(\bfit x - \bfit p)]|_U = [b \odot Q(\bfit x - \bfit p)]|_U$, we have $a=b$, and hence
$$[P(\bfit x - \bfit p)]|_U = [Q(\bfit x - \bfit p)]|_U.$$
Thus $[P]|_{U-\bfit p} = [Q]|_{U-\bfit p}$, where $U - \bfit p := \{ \bfit q - \bfit p \ | \ \bfit q \in U \}$.
Since $U - \bfit p$ is an open neighborhood of $\mathbf 0$, by Lemma \ref{Bfcn}, we have $[P]=[Q]$.
\end{proof}

We have shown that the following semirings are isomorphic:
$$\Txpmfcn / \Cp, \quad \Txpm / \pi^{-1}(\Cp), \quad \Im(\inp) / \Cp', \text{ and } (\Bxpm \times_e \mathbb T) / \Cp''.$$
Finally, we use Lemma \ref{Textquot}.
By Lemma \ref{lem:Cpdd}, $\Cp''$ is partial, and hence $(\Bxpm \times_e \mathbb T) / \Cp''$ is isomorphic to $(\Bxpm / \CpB) \times_e \mathbb T$, where $\CpB$ be the congruence on $\Bxpm$ defined as
$$\CpB := \{ (P,Q) \in (\Bxpm \smallsetminus \{ - \infty \})^2 \ | \ ((P,0),(Q,0)) \in \Cp'' \} \cup \{ (-\infty, -\infty) \}.$$
By Lemma \ref{lem:Cpdd}, it is easily seen that
$$\CpB = \{ (P,Q) \in (\Bxpm)^2 \ | \ [P]=[Q] \},$$
which means that $\Bxpm / \CpB = \Bxpmfcn$.
Therefore we have the following result:

\begin{prop}
\label{prop:localText}
The semiring $\Txpmfcn / \Cp$ is isomorphic to $\Bxpmfcn \times_e \mathbb T$.
The isomorphism $\Txpmfcn / \Cp \to \Bxpmfcn \times_e \mathbb T$  is given by
$$(\text{\textrm{the class of }} [P]) \mapsto \begin{cases}
([(\inp(P))_{\mathbb B}], P(\bfit p)) & (P \neq -\infty)\\
-\infty & (P = -\infty),
\end{cases}$$
and the inverse map is
$$([P], a) \mapsto (\text{the class of } [a \odot P(\bfit x - \bfit p)]), \quad -\infty \mapsto -\infty.$$
\end{prop}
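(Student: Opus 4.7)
The plan is to chain together the isomorphisms and lemmas already established earlier in the section and apply Lemma \ref{Textquot}. Let $\pi : \Txpm \to \Txpmfcn$ be the canonical surjection. First I would note that $\Txpmfcn / \Cp \cong \Txpm / \pi^{-1}(\Cp)$ via the standard factorization of surjective homomorphisms. Since the previous proposition gives $\Ker(\inp) \subset \pi^{-1}(\Cp)$, Lemma \ref{lem:cong on im} then yields
$$\Txpm / \pi^{-1}(\Cp) \;\cong\; \Im(\inp) / \Cp',$$
where $\Cp' = \pi^{-1}(\Cp) \cap (\Im(\inp))^2$ is exactly the congruence described explicitly in the text.

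Next I would transport $\Cp'$ through the isomorphism $\Phi : \Im(\inp) \to \Bxpm \times_e \mathbb T$ of Proposition \ref{TtoB}. This produces a congruence $\Cp''$ on $\Bxpm \times_e \mathbb T$, giving $\Im(\inp)/\Cp' \cong (\Bxpm \times_e \mathbb T)/\Cp''$. By Lemma \ref{lem:Cpdd}, $\Cp''$ has the simple description
$$\Cp'' = \{ ((P,a),(Q,a)) \mid [P]=[Q],\ a \in \mathbb R \} \cup \{(-\infty,-\infty)\},$$
so in particular $\Cp''$ is partial.

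Since $\Cp''$ is partial, Lemma \ref{Textquot} applies. It produces the congruence
$$\CpB := \{(P,Q) \in (\Bxpm \smallsetminus \{-\infty\})^2 \mid ((P,0),(Q,0)) \in \Cp''\} \cup \{(-\infty,-\infty)\}$$
on $\Bxpm$, together with an isomorphism $(\Bxpm \times_e \mathbb T)/\Cp'' \cong (\Bxpm / \CpB) \times_e \mathbb T$. The explicit form of $\Cp''$ above immediately gives $\CpB = \{(P,Q) \in (\Bxpm)^2 \mid [P]=[Q]\}$, which is precisely the congruence defining $\Bxpmfcn$. Hence $\Bxpm/\CpB = \Bxpmfcn$, and composing the chain produces the desired isomorphism $\Txpmfcn / \Cp \cong \Bxpmfcn \times_e \mathbb T$.

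For the explicit formulas, I would just trace an element $[P] \in \Txpmfcn$ through the composition: the class of $[P]$ first goes to the class of $P$ in $\Txpm/\pi^{-1}(\Cp)$, then to the class of $\inp(P)$ in $\Im(\inp)/\Cp'$, then via $\Phi$ to $((\inp(P))_{\mathbb B}, P(\bfit p))$ (using $\inp(P)(\bfit p)=P(\bfit p)$), and finally to the same element of $\Bxpmfcn \times_e \mathbb T$ under the identification from Lemma \ref{Textquot}; the case $P = -\infty$ is handled separately. The inverse formula is obtained similarly by composing the inverse maps $\Psi$ of Proposition \ref{TtoB} with the canonical projections. The only nontrivial point is that most of the work is already done; the proof is essentially bookkeeping, so there is no serious obstacle — just a careful diagram chase to verify that the composed maps take the stated form.
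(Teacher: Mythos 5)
Your proposal is correct and follows exactly the chain of isomorphisms the paper itself uses: factor through $\Txpm/\pi^{-1}(\Cp)$, apply Lemma \ref{lem:cong on im} to pass to $\Im(\inp)/\Cp'$, transport along Proposition \ref{TtoB} to get $\Cp''$, invoke Lemma \ref{lem:Cpdd} to see $\Cp''$ is partial, and finish with Lemma \ref{Textquot} and the identification $\Bxpm/\CpB = \Bxpmfcn$. The tracing of the explicit formulas through the composition is likewise the same bookkeeping the paper performs, so there is nothing to add.
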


We conclude this section with our first main theorem.

\begin{thm}
\label{main1}
\begin{enumerate}[$(1)$]
\item Any congruence $\mathcal C$ on $\Txpmfcn$ including $\mathcal C_{\bfit p}$ satisfies just one of the following conditions:
\begin{enumerate}[$(a)$]
\item if $(f, g) \in \mathcal C$, then $f(\bfit p) = g(\bfit p)$, and
\item $(f, g) \in \mathcal C$ for any $f, g \in \Txpmfcn \smallsetminus \{ -\infty \}$.
\end{enumerate}
\item If $\mathcal C$ is a congruence on $\Txpmfcn$ including $\mathcal C_{\bfit p}$ and satisfying the condition (a), then there exists a congruence $\mathcal C'$ on $\Bxpmfcn$ such that
$\Txpmfcn / \mathcal C $ is isomorphic to $(\Bxpmfcn / \mathcal C') \times_e \mathbb T$.
\end{enumerate}
\end{thm}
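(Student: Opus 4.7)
The plan is to pull everything back to the $\mathbb T$-extension picture via Proposition \ref{prop:localText} and then apply the two structural lemmas \ref{Textcong} and \ref{Textquot} to the induced congruence on $\Bxpmfcn \times_e \mathbb T$. Concretely, any congruence $\mathcal C$ on $\Txpmfcn$ containing $\mathcal C_{\bfit p}$ descends to a congruence $\overline{\mathcal C} := \mathcal C / \mathcal C_{\bfit p}$ on the quotient $\Txpmfcn / \mathcal C_{\bfit p}$, and the canonical isomorphism $\Txpmfcn / \mathcal C \cong (\Txpmfcn / \mathcal C_{\bfit p}) / \overline{\mathcal C}$ reduces both parts of the theorem to statements about $\overline{\mathcal C}$. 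Using the isomorphism of Proposition \ref{prop:localText}, I transport $\overline{\mathcal C}$ to a congruence $\widetilde{\mathcal C}$ on $\Bxpmfcn \times_e \mathbb T$; crucially, the $\mathbb R$-coordinate of the image of the class of $[P]$ is precisely $P(\bfit p)$.

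For part (1), I apply Lemma \ref{Textcong} to $\widetilde{\mathcal C}$. It gives a dichotomy: either (i) $\widetilde{\mathcal C}$ is partial, i.e. whenever $(f,a)\simwrt{\widetilde{\mathcal C}}(g,b)$ one has $a=b$, or (ii) $\widetilde{\mathcal C}$ identifies all pairs in $((\Bxpmfcn \smallsetminus \{-\infty\}) \times \mathbb R)^2$. Reading this back through the isomorphism, case (i) says exactly that $(f,g) \in \mathcal C$ forces $f(\bfit p) = g(\bfit p)$, which is condition (a); case (ii) says that any two elements of $\Txpmfcn \smallsetminus \{-\infty\}$ are $\mathcal C$-equivalent, which is condition (b). The mutual exclusivity is inherited from Lemma \ref{Textcong}.

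For part (2), assume $\mathcal C$ satisfies (a), so that $\widetilde{\mathcal C}$ is partial. Then Lemma \ref{Textquot} produces a congruence $\mathcal C'$ on $\Bxpmfcn$ (satisfying the non-collapse condition $(*)$ of Lemma \ref{Textcongr}) together with an isomorphism
\[
(\Bxpmfcn \times_e \mathbb T)/\widetilde{\mathcal C} \;\cong\; (\Bxpmfcn / \mathcal C') \times_e \mathbb T.
\]
Chaining this with the isomorphisms $\Txpmfcn / \mathcal C \cong (\Txpmfcn / \mathcal C_{\bfit p})/\overline{\mathcal C} \cong (\Bxpmfcn \times_e \mathbb T)/\widetilde{\mathcal C}$ yields the desired conclusion.

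The only point that needs a small check, rather than a routine invocation, is that the evaluation-at-$\bfit p$ coordinate under the isomorphism of Proposition \ref{prop:localText} really is respected by the translation between $\overline{\mathcal C}$ and $\widetilde{\mathcal C}$; this is immediate from the explicit formula $[P]\mapsto([(\inp P)_{\mathbb B}], P(\bfit p))$ recorded there, so the identification of case (i) of Lemma \ref{Textcong} with condition (a) is essentially automatic. Thus the main substance of the argument is already packaged into Proposition \ref{prop:localText}, Lemma \ref{Textcong}, and Lemma \ref{Textquot}; the proof itself is a short assembly.
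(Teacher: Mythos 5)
Your proposal is correct and follows essentially the same route as the paper: transport $\mathcal C/\mathcal C_{\bfit p}$ through the isomorphism of Proposition \ref{prop:localText}, apply Lemma \ref{Textcong} to the transported congruence to get the dichotomy in part (1), and apply Lemma \ref{Textquot} for part (2). The only detail you elide is that reading case (i) back as condition (a) also covers pairs where $f$ or $g$ is $-\infty$ only because a partial congruence never identifies a non-$(-\infty)$ element with $-\infty$ (the remark preceding Lemma \ref{Textcongr}), a point the paper spells out explicitly.
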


\begin{proof}
\begin{enumerate}[(1)]
\item Let us denote by $\Phi : \Txpmfcn / \Cp \to \Bxpmfcn \times_e \mathbb T$ the isomorphism in the previous proposition.
Since $\mathcal C$ includes $\mathcal C_{\bfit p}$, the congruence $\mathcal C / \mathcal C_{\bfit p}$ on $\Txpmfcn / \mathcal C_{\bfit p}$ is defined. 
Let $\mathcal C'$ be the congruence on $\Bxpmfcn \times_e \mathbb T$ corresponding to $\mathcal C / \mathcal C_{\bfit p}$ via $\Phi$.
Note that, for $f, g \in \Txpmfcn$, $f \simwrt{\mathcal C} g$ if and only if
$$\Phi(\text{the class of } f) = \Phi(\text{the class of } g).$$
If $f = [P], g=[Q]$, it is equivalent to
$$f \neq -\infty, g \neq -\infty \text{ and } ([(\inp(P))_{\mathbb B}], f(\bfit p)) \simwrt{\mathcal C'} ([(\inp(Q))_{\mathbb B}], g(\bfit p)),\text{ or}$$
$$f=g=-\infty.$$
By Lemma \ref{Textcong}, $\mathcal C'$ satisfies just one of the following conditions:
\begin{enumerate}[($a'$)]
\item If $(f, a) \simwrt{\mathcal C'} (g, b)$, then $a=b$.
\item $(f, a) \simwrt{\mathcal C'} (g, b)$ for any $f, g \in \Bxpmfcn \smallsetminus \{ -\infty \}$ and $a,b \in \mathbb R$.
\end{enumerate}

We show that if $\mathcal C'$ satisfies $(a')$, then $\mathcal C$ satisfies $(a)$.
Assume that $\mathcal C'$ satisfies $(a')$.
Take $(f, g) \in \mathcal C$.
If $f = -\infty$, we have $-\infty \simwrt{\mathcal C'} \Phi( \text{the class of } g)$.
By the argument before Lemma \ref{Textcongr}, $\Phi( \text{the class of } g)= -\infty$, hence $g= -\infty$, therefore $f(\bfit p) = g(\bfit p) = -\infty$.
If $f \neq -\infty$, we have $-\infty \neq \Phi( \text{the class of } f) \simwrt{\mathcal C'} \Phi( \text{the class of } g)$.
Again by the argument before Lemma \ref{Textcongr}, $\Phi( \text{the class of } g) \neq -\infty$, hence $g \neq -\infty$.
Thus we have $([(\inp(P))_{\mathbb B}], f(\bfit p)) \simwrt{\mathcal C'} ([(\inp(Q))_{\mathbb B}], g(\bfit p))$, therefore $f(\bfit p) = g(\bfit p)$ by $(a')$.
Thus $\mathcal C$ satisfies $(a)$.

Next we show that if $\mathcal C'$ satisfies $(b')$, then $\mathcal C$ satisfies $(b)$.
Assume that $\mathcal C'$ satisfies $(b')$.
Let $f, g \in \Bxpmfcn \smallsetminus \{ -\infty \}$.
Since $\Phi(\text{the class of } f) \neq -\infty$ and $\Phi(\text{the class of } g) \neq -\infty$, $\Phi(\text{the class of } f) \simwrt{\mathcal C'} \Phi(\text{the class of } g)$ by $(b')$.
Hence $f \simwrt{\mathcal C} g$.
It means that $\mathcal C$ satisfies $(b)$.

Therefore $\mathcal C$ satisfies $(a)$ or $(b)$.
It is clear that there is no congruence on $\Bxpmfcn$ satisfying $(a)$ and $(b)$.
Thus the statement $(1)$ holds.

\item Let $\mathcal C'$ be the same one to the proof of (1).
Thus $\mathcal C'$ satisfies $(a')$ because, if not, $\mathcal C'$ satisfies $(b')$, and then $\mathcal C$ satisfies $(b)$, which contradicts the assumption that $\mathcal C$ satisfies $(a)$.
In other words, $\mathcal C'$ is partial.
We have the isomorphisms $\Txpmfcn / \mathcal C \to (\Txpmfcn / \mathcal C_p) / (\mathcal C / \mathcal C_p) \to (\Bxpmfcn \times_e \mathbb T)/\mathcal C'$.
By Lemma \ref{Textquot}, there is a congruence $\mathcal C''$ on $\Bxpmfcn$ such that $(\Bxpmfcn \times_e \mathbb T)/\mathcal C'$ is isomorphic to $(\Bxpmfcn / \mathcal C'') \times_e \mathbb T$.  \qedhere
\end{enumerate}
\end{proof}

\section{Local theory of functions on a set}

Let $X$ be a subset of $\mathbb R^n$ and $\bfit p \in X$ be a vector.
We consider the following congruence:

\begin{dfn}
We define the congruence $\CXp$ on $\Txpmfcn$ as
$$\CXp = \left\{ (f, g) \in (\Txpmfcn)^2  \ \middle| \ 
\begin{aligned}
&\text{ there exists an open neighborhood $U$ of $\bfit p$ in $\mathbb R^n$ } \\
&\text{ such that }  f|_{U \cap X} = g|_{U \cap X}
\end{aligned} \right\}.$$
\end{dfn}

Clearly $\CXp \supset \mathcal C_{\bfit p}$, and $\CXp$ satisfies the condition $(a)$ in Theorem \ref{main1} (1).
Thus, by Theorem \ref{main1} (2), there is a corresponding congruence $\CXpB$ on $\Bxpmfcn$ such that there is an isomorphism
$$\Txpmfcn / \CXp \to (\Bxpmfcn / \CXpB) \times_e \mathbb T.$$
We can describe $\CXpB$ explicitly:

\begin{lem}
\label{CXpB}
The congruence $\CXpB$ coincides with
$$\left\{ (f, g) \in \left( \Bxpmfcn \right)^2 \ \middle| \ 
\begin{aligned}
&\text{ there exists an open neighborhood $U$ of $\mathbf 0$ } \\
&\text{ such that }  f|_{U \cap (X- \bfit p)} = g|_{U \cap (X - \bfit p)}
\end{aligned} \right\},$$
where $X- \bfit p = \{ \bfit r - \bfit p \in \mathbb R^n \ | \ \bfit r \in X \}$.
\end{lem}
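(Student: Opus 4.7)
The plan is to unwind the definition of $\CXpB$ through the chain of isomorphisms of Section~3 and observe that the substitution $\bfit x \mapsto \bfit x - \bfit p$ naturally converts a ``neighborhood of $\bfit p$ in $X$'' condition into a ``neighborhood of $\mathbf 0$ in $X - \bfit p$'' condition.

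First, I would recall the construction of $\CXpB$. By Proposition~\ref{prop:localText} we have an isomorphism $\Phi : \Txpmfcn / \mathcal C_{\bfit p} \to \Bxpmfcn \times_e \mathbb T$. Since $\CXp \supset \mathcal C_{\bfit p}$ satisfies condition $(a)$ of Theorem~\ref{main1}, the congruence $\mathcal C''$ on $\Bxpmfcn \times_e \mathbb T$ corresponding to $\CXp/\mathcal C_{\bfit p}$ via $\Phi$ is partial; Lemma~\ref{Textcongr} then yields $\CXpB$ on $\Bxpmfcn$ by the formula $\CXpB = \{(f,g) \in (\Bxpmfcn \smallsetminus \{-\infty\})^2 \mid ((f,0),(g,0)) \in \mathcal C''\} \cup \{(-\infty,-\infty)\}$.

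Next, using the explicit inverse isomorphism of Proposition~\ref{prop:localText}, for $f = [P]$ with $P \in \Bxpm \smallsetminus \{-\infty\}$ the preimage $\Phi^{-1}(f, 0)$ is the class of $[P(\bfit x - \bfit p)]$ in $\Txpmfcn / \mathcal C_{\bfit p}$. Hence, for $f = [P]$ and $g = [Q]$ with $P, Q \neq -\infty$, the pair $((f,0),(g,0))$ lies in $\mathcal C''$ if and only if $[P(\bfit x - \bfit p)] \simwrt{\CXp} [Q(\bfit x - \bfit p)]$, which by definition of $\CXp$ means there is an open neighborhood $V$ of $\bfit p$ such that $P(\bfit s - \bfit p) = Q(\bfit s - \bfit p)$ for every $\bfit s \in V \cap X$. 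Performing the change of variable $\bfit r := \bfit s - \bfit p$ and setting $U := V - \bfit p$, the set $V \cap X$ corresponds bijectively to $U \cap (X - \bfit p)$, and $U$ ranges precisely over open neighborhoods of $\mathbf 0$ as $V$ ranges over those of $\bfit p$. Thus the condition is equivalent to the existence of an open neighborhood $U$ of $\mathbf 0$ with $f|_{U \cap (X - \bfit p)} = g|_{U \cap (X - \bfit p)}$, which is exactly the defining condition of the right-hand side for non-$(-\infty)$ pairs.

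Finally, I would check the $-\infty$ bookkeeping. Neither side contains a pair $(f, -\infty)$ with $f \neq -\infty$: for the right-hand side, note that $\bfit p \in X$ implies $\mathbf 0 \in X - \bfit p$, and for any $f = [P]$ with $P \in \Bxpm \smallsetminus \{-\infty\}$ one has $f(\mathbf 0) = 0 \neq -\infty$ (every Boolean Laurent monomial takes value $0$ at $\mathbf 0$), so $f|_{U \cap (X-\bfit p)}$ cannot equal $-\infty$ on any neighborhood $U$ of $\mathbf 0$; for $\CXpB$ the exclusion is built into its construction above. The only remaining pair, $(-\infty, -\infty)$, lies in both sides trivially. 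This completes the identification.

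There is no real technical obstacle here beyond careful bookkeeping; the only point that requires attention is verifying that $\Phi^{-1}$ converts the level $(f, 0)$ back into the shifted polynomial $P(\bfit x - \bfit p)$, since it is precisely this shift that mediates between neighborhoods of $\bfit p$ and neighborhoods of $\mathbf 0$.
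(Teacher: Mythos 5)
Your proposal is correct and follows essentially the same route as the paper: it traces $\CXpB$ back through Proposition \ref{prop:localText} and Lemma \ref{Textcongr}, uses the explicit inverse isomorphism $(f,0)\mapsto(\text{class of }[f(\bfit x-\bfit p)])$, and performs the same change of variables $U = V - \bfit p$. Your $-\infty$ bookkeeping is done on the Boolean side rather than via the paper's observation that $(f,-\infty)\notin\CXp$ for $f\neq-\infty$, but the two checks are equivalent.
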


\begin{proof}
Let $\mathcal C'$ be the congruence on $\Bxpmfcn \times_e \mathbb T$ corresponding to $\CXp / \mathcal C_{\bfit p}$ via the isomorphism $\Txpmfcn / \mathcal C_{\bfit p} \to \Bxpmfcn \times_e \mathbb T$ in Proposition \ref{prop:localText}.
Thus, by the proof of Theorem \ref{main1} (2), $\CXpB$ is the congruence corresponding to $\mathcal C'$ via the correspondence in Lemma \ref{Textcongr}.
Hence, for $f, g \in \Bxpmfcn$,
$$(f, g) \in \CXpB \Longleftrightarrow \text{`` $f \neq -\infty, g \neq -\infty$ and $((f, 0), (g, 0)) \in \mathcal C'$ ''}  \text{ or } f = g = -\infty.$$
Recall that the isomorphism $\Bxpmfcn \times_e \mathbb T \to \Txpmfcn / \mathcal C_{\bfit p}$ is given by
$$(f, a) \mapsto (\text{the class of } a \odot f(\bfit x - \bfit p)), \quad -\infty \mapsto -\infty$$
by Proposition \ref{prop:localText}.
Then, for $f, g \in \Bxpmfcn \smallsetminus \{ -\infty \}$,
\begin{align*}
((f, 0), (g, 0)) \in \mathcal C' &\Longleftrightarrow (\text{the class of } f(\bfit x - \bfit p), \text{the class of } g(\bfit x - \bfit p)) \in \CXp / \mathcal C_{\bfit p}\\
&\Longleftrightarrow (f(\bfit x - \bfit p), g(\bfit x - \bfit p)) \in \CXp.
\end{align*}
Hence
\begin{align*}
(f, g) &\in \CXpB \\
&\Longleftrightarrow \text{`` $f \neq -\infty, g \neq -\infty$ and $(f(\bfit x - \bfit p), g(\bfit x - \bfit p)) \in \CXp$ ''}  \text{ or } f = g = -\infty \\
&\Longleftrightarrow (f(\bfit x - \bfit p), g(\bfit x - \bfit p)) \in \CXp,
\end{align*}
where the ``$\Longleftarrow$'' part of the second equivalence holds because $(f, -\infty), (-\infty, f) \not\in \CXp$ for any $f \neq -\infty$.
Moreover, we have the equivalence
\begin{align*}
(f(\bfit x &- \bfit p), g(\bfit x - \bfit p)) \in \CXp \\
&\Longleftrightarrow \text{ there exists an open neighborhood $U$ of $\bfit p$ such that }  f(\bfit q - \bfit p) = g(\bfit q - \bfit p) \\
&\text{ for any } \bfit q \in U \cap X \\
&\Longleftrightarrow \text{ there exists an open neighborhood $V$ of $\mathbf 0$ such that } f(\bfit q) = g(\bfit q) \\
&\text{ for any } \bfit q \in V \cap (X - \bfit p),
\end{align*}
where the ``$\Longrightarrow$'' part (resp. the ``$\Longleftarrow$'' part) of the second equivalence is shown by taking $V = U - \bfit p := \{ \bfit q - \bfit p \ | \ \bfit q \in U \}$ (resp. $U = V + \bfit p := \{ \bfit q + \bfit p \ | \ \bfit q \in V \}$).
This shows the lemma.
\end{proof}

In particular, when $X$ is the support of a 1-dimensional fan, we also have the following description.

\begin{lem}
Let $X$ be a 1-dimensional fan.
Then $\mathcal C \left( |X| \right)_{\mathbf 0, \mathbb B}$ coincides with
$$\mathcal C(X) := \left\{ (f, g) \in \left( \Bxpmfcn \right)^2 \ \middle| \ f|_{|X|} = g|_{|X|} \right\}.$$
\end{lem}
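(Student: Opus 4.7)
My plan is to apply the previous lemma with $\bfit p = \mathbf 0$, so that $|X| - \bfit p = |X|$, and then reduce the ``local on $|X|$'' statement to the ``global on $|X|$'' statement using the homogeneity provided by Lemma \ref{Bfcn}. Unpacking the previous lemma gives
$$\mathcal C(|X|)_{\mathbf 0, \mathbb B} = \left\{ (f,g) \in (\Bxpmfcn)^2 \ \middle| \ \begin{aligned}&\text{there is an open neighborhood $U$ of $\mathbf 0$}\\ &\text{such that } f|_{U \cap |X|} = g|_{U \cap |X|} \end{aligned} \right\},$$
so the task is to show that this set equals $\mathcal C(X)$.

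The inclusion $\mathcal C(X) \subset \mathcal C(|X|)_{\mathbf 0, \mathbb B}$ is immediate by taking $U = \mathbb R^n$. For the reverse inclusion, suppose $(f, g)$ lies in $\mathcal C(|X|)_{\mathbf 0, \mathbb B}$, witnessed by some open neighborhood $U$ of $\mathbf 0$. I need to show $f(\bfit q) = g(\bfit q)$ for every $\bfit q \in |X|$. Since $|X|$ is the union of the rays $\rho \in X(1)$ together with the origin, it suffices to argue ray by ray. Fix a ray $\rho$ with primitive direction vector $\bfit d$, so that every point of $\rho$ has the form $t \bfit d$ with $t \geq 0$. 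Because $U$ is an open neighborhood of $\mathbf 0$, there exists some $s > 0$ with $s \bfit d \in U$, hence $s \bfit d \in U \cap |X|$, and therefore $f(s \bfit d) = g(s \bfit d)$.

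Now I invoke Lemma \ref{Bfcn}: for any $t \geq 0$,
$$t \cdot f(\bfit d) = f(t \bfit d), \qquad t \cdot g(\bfit d) = g(t \bfit d).$$
Applying this with $t = s$ and cancelling the positive real scalar $s$ from $s \cdot f(\bfit d) = s \cdot g(\bfit d)$ yields $f(\bfit d) = g(\bfit d)$, and then applying Lemma \ref{Bfcn} again shows $f(t \bfit d) = g(t \bfit d)$ for every $t \geq 0$. Thus $f$ and $g$ agree on the entire ray $\rho$, and since $\rho \in X(1)$ was arbitrary they agree on $|X|$. The origin is covered either trivially (it lies on every ray) or by noting that any nonzero $f \in \Bxpmfcn$ satisfies $f(\mathbf 0) = 0$ by Lemma \ref{Bfcn} with $t = 0$, and the corner case $f = -\infty$ forces $g = -\infty$ already from the local equality. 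I do not anticipate any real obstacle: the content is essentially that Boolean Laurent polynomial functions are positively homogeneous, so their values on a neighborhood of $\mathbf 0$ in a cone already determine their values on the whole cone.
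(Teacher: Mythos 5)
Your proposal is correct and follows the same route as the paper: apply Lemma \ref{CXpB} with $\bfit p = \mathbf 0$ and then use the positive homogeneity from Lemma \ref{Bfcn} to upgrade agreement near $\mathbf 0$ to agreement on all of $|X|$. The paper compresses the second step into the single phrase ``which coincides with $\mathcal C(X)$ by Lemma \ref{Bfcn}''; your ray-by-ray argument (including the $f = -\infty$ corner case) is exactly the intended justification.
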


\begin{proof}
By Lemma \ref{CXpB},
$$\mathcal C \left( |X| \right)_{\mathbf 0, \mathbb B} = \left\{ (f, g) \in \left( \Bxpmfcn \right)^2 \ \middle| \ 
\begin{aligned}
&\text{ there exists an open neighborhood $U$ of $\mathbf 0$ } \\
&\text{ such that }  f|_{U \cap |X|} = g|_{U \cap |X|}
\end{aligned} \right\},$$
which coincides with $\mathcal C(X)$ by Lemma \ref{Bfcn}.
\end{proof}

\section{Function semirings of 1-dimensional tropical fans}
\label{sec:function}

In the rest of paper, vectors in $\mathbb R^n$ are always regarded as column vectors.
On the other hand, when we use a multi-index such as $\bfit x^{\bfit u} = x_1^{u_1} \odot \cdots \odot x_n^{u_n}$, we regard the index vector $\bfit u$ as a row vector.
We use the following notation:
For two sets $A$ and $B$, we denote $B^A$ the set of all maps from $A$ to $B$.

\subsection{Weighted evaluation maps}

Throughout this section, we fix a 1-dimensional tropical fan $X = (X, \omega_X)$ in $\mathbb R^n$.
We study the properties of the semiring $\Bxpmfcn / \CX$, and the relation between $X$ and $\Bxpmfcn / \CX$.

First, we see that the semiring $\Bxpmfcn / \CX$ is isomorphic to a subsemiring of $(\mathbb Z \cup \{ -\infty \})^{X(1)}$, where we assume that $(\mathbb Z \cup \{ -\infty \})^{X(1)}$ is equipped with the pointwise max-plus operation, denoted by $\oplus$ for addition and $\odot$ for multiplication.
Let $\bfit d_{\rho}$ be the primitive direction vector of $\rho \in X(1)$ and $w_{\rho} := \omega_X(\rho)$.
For $f, g \in \Bxpmfcn$, $(f, g) \in \CX$ if and only if $f(t\bfit d_{\rho}) = g(t\bfit d_{\rho})$ for any $\rho \in X(1)$ and $t \geq 0$.
By Lemma \ref{Bfcn}, it is equivalent to $f(\bfit d_{\rho}) = g(\bfit d_{\rho})$ for any $\rho \in X(1)$.
Consider the map
$$\phi:\Bxpmfcn \to (\mathbb Z \cup \{ -\infty \})^{X(1)}, f \mapsto \phi(f),$$
where $\phi(f)$ is the map $X(1) \to \mathbb Z \cup \{ -\infty \}, \rho \mapsto f(\bfit d_{\rho})$.
Thus it is easily checked that $\phi$ is a semiring homomorphism and $\Ker(\phi) = \CX$, and hence $\Bxpmfcn / \CX$ is isomorphic to the subsemiring $\Im (\phi)$ of $(\mathbb Z \cup \{ -\infty \})^{X(1)}$.

Instead of that $\phi$, we use the following ``weighted'' map for a reason we describe later.

\begin{dfn}
The \textit{weighted evaluation map} $\phi_X$ of $X$ is
$$\phi_X:\Bxpmfcn \to (\mathbb Z \cup \{ -\infty \})^{X(1)}, \ f \mapsto \phi_X(f),$$
where $\phi_X(f)$ is the map $X(1) \to \mathbb Z \cup \{ -\infty \}, \rho \mapsto w_{\rho} f(\bfit d_{\rho})$, and the multiplication in $w_{\rho} f(\bfit d_{\rho})$ is the standard one.
\end{dfn}

It is easily checked that $\phi_X$ is also a semiring homomorphism and $\Ker(\phi_X) = \CX$.
Thus there is an isomorphism $\Bxpmfcn / \CX \to \Im(\phi_X)$ induced by $\phi_X$.
Note that $\Im(\phi_X)$ is generated by $\{ \phi_X([x_1]), \phi_X([x_1^{-1}]), \ldots, \phi_X([x_n]), \phi_X([x_n^{-1}]) \}$ as semiring since $\Bxpmfcn$ is generated by $\{ [x_1], [x_1^{-1}], \ldots, [x_n], [x_n^{-1}] \}$.

By the following proposition, we can reconstruct $X$ from $\phi_X$.
This is the reason why we use the weighted evaluation maps.

\begin{prop}
\label{phitostar}
Let $F_i := \phi_X([x_i])$ for $i= 1, \ldots, n$.
Let $\bfit F(\rho) := {}^t \matac {F_1(\rho)} {\cdots} {F_n(\rho)}$ for any $\rho \in X(1)$.
Then $\bfit F(\rho) = w_{\rho} \bfit d_{\rho}$. 
In addition, $w_{\rho}$ is the greatest common divisor of the coordinates of $\bfit F(\rho)$.
\end{prop}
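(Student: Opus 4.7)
The plan is to compute $\phi_X([x_i])(\rho)$ directly from the definitions and then invoke the primitivity of $\bfit d_\rho$.

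First I would recall the definition of the weighted evaluation map. By definition, $\phi_X(f)(\rho) = w_\rho \, f(\bfit d_\rho)$ where the multiplication on the right is the ordinary one. Taking $f = [x_i]$, I need to identify the value of the function $[x_i]$ at the vector $\bfit d_\rho$. As noted in the example following the definition of $\Bxpmfcn$ (and as a direct calculation shows), the Boolean Laurent monomial $x_i$ defines the linear form on $\mathbb R^n$ sending $\bfit q = (q_1, \ldots, q_n)$ to its $i$-th coordinate $q_i$. Hence $[x_i](\bfit d_\rho) = (\bfit d_\rho)_i$, the $i$-th coordinate of the primitive direction vector.

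From this, $F_i(\rho) = w_\rho (\bfit d_\rho)_i$ for each $i=1, \ldots, n$, and stacking these coordinates into a column vector yields $\bfit F(\rho) = w_\rho \bfit d_\rho$, which is the first assertion.

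For the second assertion, I would use the fact that $\bfit d_\rho$ is by hypothesis the primitive direction vector of the rational ray $\rho$, so its coordinates are integers whose greatest common divisor is $1$. Since $w_\rho$ is a positive integer (weights take values in $\mathbb Z_{>0}$), the greatest common divisor of the coordinates of $w_\rho \bfit d_\rho$ equals $w_\rho \cdot \gcd((\bfit d_\rho)_1, \ldots, (\bfit d_\rho)_n) = w_\rho$, which gives the claim. No step here is a real obstacle; the proof is essentially unwrapping the definitions of $\phi_X$, $[x_i]$, and ``primitive direction vector''.
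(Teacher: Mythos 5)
Your proof is correct and simply spells out the definition-unwinding that the paper compresses into ``This is clear by the definition of $\phi_X$'': computing $F_i(\rho) = w_\rho(\bfit d_\rho)_i$ and then using primitivity of $\bfit d_\rho$ for the gcd claim. No difference in approach, just more detail.
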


\begin{proof}
This is clear by the definition of $\phi_X$.
\end{proof}

\begin{rem}
\label{rem:reconst}
We cannot reconstruct $X$ from the semiring homomorphism $\Bxpmfcn \to \Bxpmfcn / \CX$.
Indeed, if $X' = (X', \omega_{X'})$ is another 1-dimensional tropical fan such that $|X'| = |X|$ and $\omega_{X'} \neq \omega_X$, the congruences $\CX$ and $\mathcal C(X')$ coincide by definition.
Such $X'$ always exists.
For example, the weight function $\omega_{X'} := 2 \omega_X$ works.
\end{rem}

Let $A$ be a finite set.
For any element $F \in (\mathbb Z \cup \{ -\infty \})^A$, we denote $\deg F = \bigodot_{a \in A} F(a) = \sum_{a \in A} F(a)$, which we call the \textit{degree} of $F$.
Clearly, degrees have the following properties: For any $F, G \in (\mathbb Z \cup \{ -\infty \})^A$,
\begin{itemize}
\item $\deg(F \odot G) = \deg F \odot \deg G = \deg F + \deg G$,
\item $\deg(F \oplus G) \geq \max\{ \deg F, \deg G \}, \text{ and the equality holds if and only if } F \oplus G = F \text{ or } F \oplus G = G$.
\end{itemize}

The weighted evaluation map $\phi_X$ has the following properties.

\begin{prop}
\label{imphi}
\begin{enumerate}[$(1)$]
\item For a Boolean Laurent monomial $P \in \Bxpm$, $\deg (\phi_X([P])) = 0$.
\item For any function $f \in \Bxpmfcn \smallsetminus \{ -\infty \}$, $\deg (\phi_X(f)) \geq 0$.
\end{enumerate}
\end{prop}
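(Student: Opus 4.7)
My plan is to reduce everything to the balancing condition for (1), and then to reduce (2) to (1) using the behavior of $\deg$ under $\oplus$ already listed just before the proposition.

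For part (1), I would take $P = \bfit x^{\bfit u}$ for some $\bfit u \in \mathbb Z^n$. The key observation is that $[P]$ is the linear functional $\bfit q \mapsto \bfit u \cdot \bfit q$, so
\[
\phi_X([P])(\rho) = w_\rho\, [P](\bfit d_\rho) = w_\rho\, \bfit u \cdot \bfit d_\rho.
\]
Summing over all rays,
\[
\deg \phi_X([P]) = \sum_{\rho \in X(1)} w_\rho\, \bfit u \cdot \bfit d_\rho = \bfit u \cdot \Bigl( \sum_{\rho \in X(1)} w_\rho \bfit d_\rho \Bigr) = \bfit u \cdot \mathbf 0 = 0,
\]
where the last equality is precisely the balancing condition defining $X$ as a tropical fan. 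This is the whole content of (1).

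For part (2), I would write $f = \bigl[ \bigoplus_{i=1}^k \bfit x^{\bfit u_i} \bigr]$ for some $\bfit u_i \in \mathbb Z^n$; this is possible because $f \ne -\infty$. Since $\phi_X$ is a semiring homomorphism,
\[
\phi_X(f) = \bigoplus_{i=1}^k \phi_X\bigl([\bfit x^{\bfit u_i}]\bigr).
\]
By part (1), each summand $\phi_X([\bfit x^{\bfit u_i}])$ has degree $0$. Now I would apply the inequality $\deg(F \oplus G) \geq \max\{\deg F, \deg G\}$ listed immediately before the proposition, iterated $k-1$ times, to conclude
\[
\deg \phi_X(f) \geq \max_{i=1,\ldots,k} \deg \phi_X\bigl([\bfit x^{\bfit u_i}]\bigr) = 0.
\]
(I should also briefly observe that $\deg \phi_X(f)$ is a well-defined integer: since $f \ne -\infty$ is a finite max of Boolean Laurent monomials, $f(\bfit d_\rho) \in \mathbb Z$ for every $\rho$, so no component of $\phi_X(f)$ equals $-\infty$.)

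There is no real obstacle here: part (1) is a direct application of the balancing condition, and part (2) is an immediate consequence of (1) together with the homomorphism property of $\phi_X$ and the elementary inequality for $\deg$ under $\oplus$. The only thing to be careful about is ensuring the representative $\bigoplus_i \bfit x^{\bfit u_i}$ for $f$ exists, which follows from the fact that $\Bxpmfcn$ is generated by the monomial classes.
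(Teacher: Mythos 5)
Your proof is correct and follows essentially the same route as the paper: part (1) is the balancing condition applied to $\deg\phi_X([\bfit x^{\bfit u}]) = \bfit u \cdot \sum_\rho w_\rho \bfit d_\rho$, and part (2) decomposes $f$ into monomial classes and applies the inequality $\deg(F \oplus G) \geq \max\{\deg F, \deg G\}$. Your extra remark that no component of $\phi_X(f)$ is $-\infty$ is a harmless addition the paper leaves implicit.
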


\begin{proof}
\begin{enumerate}[(1)]
\item We can write $P = \bfit x^{\bfit u}$ for some $\bfit u \in \mathbb Z^n$.
Then
$$\deg (\phi_X([P])) = \sum_{\rho \in X(1)} w_{\rho} P(\bfit d_{\rho}) = \sum_{\rho \in X(1)} w_{\rho} \bfit u \cdot \bfit d_{\rho} = \bfit u \cdot \sum_{\rho \in X(1)} w_{\rho} \bfit d_{\rho} = \bfit u \cdot {\mathbf 0} = 0,$$
where we use the balancing condition $\sum_{\rho \in X(1)}w_{\rho} \bfit d_{\rho} = {\mathbf 0}$.
\item We can write $f = [P_1] \oplus \cdots \oplus [P_k]$ for some Boolean Laurent monomials $P_1, \ldots, P_k$.
Hence
\[ \deg (\phi_X(f)) = \deg (\phi_X([P_1]) \oplus \cdots \oplus \phi_X([P_k])) \geq \deg (\phi_X([P_1])) = 0. \qedhere \]
\end{enumerate}
\end{proof}

Let $A$ be a finite set.
We denote
$$\mathbb Z_0^A := \left\{ F \in \mathbb Z^A \ | \ \deg F = 0 \right\}, \quad \mathbb Z_{\pos}^A := \left\{ F \in \mathbb Z^A \ | \ \deg F \geq 0 \right\}.$$
Thus $\Zpos A$ is a subsemiring of $(\mathbb Z \cup \{ -\infty \})^A$, where, by abuse of notation, $-\infty$ means the map which maps any element to $-\infty$.
Moreover, $\mathbb Z^A_0$ is the unit group of $\Zpos A$ because, if $F \in \Zpos A$ has the inverse element $G \in \Zpos A$, then
$$0 = \deg (F \odot G) = \deg F + \deg G \geq \deg F \geq 0,$$
which means that $\deg F = 0$, and hence every unit has degree $0$.
Conversely, for any element $F \in \mathbb Z^A_0$, the map $A \ni a \mapsto -F(a) \in \mathbb Z \cup \{ -\infty \}$ is the inverse element of $F$.
In addition, $\mathbb Z^A_0$ is a free abelian group of rank $|A|-1$ with respect to multiplication.

By Proposition \ref{imphi}, we may regard that the target of $\phi_X$ is $\Zpos {X(1)}$, and do so in the rest of paper.

\begin{ex}
\label{exL}
Let $r, n$ be integers such that $2 \leq r \leq n+1$.
Let $\{ \bfit e_1, \ldots, \bfit e_n \}$ be the standard basis of $\mathbb R^n$ and $\bfit e_0 := -(\bfit e_1 + \cdots + \bfit e_{r-1})$.
Let $\rho_i$ be the ray spanned by $\bfit e_i$ for $i=0,1, \ldots, r-1$, and $L_{n,r} := \{ \{ {\mathbf 0} \}, \rho_0, \rho_1, \ldots, \rho_{r-1} \}$.
Consider the 1-dimensional tropical fan $L_{n,r} = (L_{n,r}, \omega_{L_{n,r}})$, where every weight $\omega_{L_{n,r}}(\rho_i)$ is $1$.
This fan is one of the \textit{standard smooth model} defined in \cite{CDMY}.

We show that $\Im(\phi_{L_{n,r}}) = \Zpos {L_{n,r}(1)}$, i.e., $\phi_{L_{n,r}}$ is surjective.
It is enough to show that $\Zpos {L_{n,r}(1)}$ is generated by the set of the images of $[x_1], [x_1^{-1}], \ldots, [x_n], [x_n^{-1}]$.
For $i = 1, \ldots, r-1$, the image of $[x_i]$ is the map
$$\phi_{L_{n,r}}([x_i])(\rho_j) =\begin{cases}
-1 & \text{ if } j=0 \\
1 & \text{ if } j=i \\
0 & \text{ otherwise}.
\end{cases}$$
For $i = r, \ldots, n$, ${L_{n,r}}([x_i]) = 0$, i.e., ${L_{n,r}}([x_i])(\rho) = 0$ for any $\rho \in L_{n,r}(1)$. 
Let $F_i := \phi_{L_{n,r}}([x_i])$ for $i=1, \ldots, r-1$.
For any map $G \in \mathbb Z_0^{L_{n,r}(1)}$ of degree $0$, let
$$G' = G(\rho_1) F_1 + \cdots + G(\rho_{r-1}) F_{r-1} = F_1^{G(\rho_1)} \odot \cdots \odot F_{r-1}^{G(\rho_{r-1})}.$$
Clearly $G'(\rho_i) = G(\rho_i)$ for $i = 1, \ldots, r$.
Also, $G'(\rho_0) = -G(\rho_1) - \cdots -G(\rho_{r-1}) = G(\rho_0)$ since $G$ is of degree $0$.
Hence $G' = G$, which means that $\mathbb Z_0^{L_{n,r}(1)} \subset \Im(\phi_{L_{n,r}})$.
Next we show that any map in $\mathbb Z_{\mathrm {pos}}^{L_{n,r}(1)}$ is the sum of some maps in $\mathbb Z_0^{L_{n,r}(1)}$.
For any map $G \in \mathbb Z_{\mathrm {pos}}^{L_{n,r}(1)}$, let $G_0, G_1$ be the map
$$G_0(\rho_i) = \begin{cases}
G(\rho_0) - \deg G &\text{if } i=0 \\
G(\rho_i) &\text{otherwise,}
\end{cases} \quad G_1(\rho_i) = \begin{cases}
G(\rho_1) - \deg G &\text{if } i=1 \\
G(\rho_i) &\text{otherwise.}
\end{cases}$$
Then $\deg G_0 = \deg G_1 = 0$ and $G = G_0 \oplus G_1$.
Therefore $\Im(\phi_{L_{n,r}}) = \Zpos {L_{n,r}(1)}$.
\end{ex}

Weighted evaluation maps are not always surjective.
In order to give examples, we show some lemmas.

\begin{lem}
\label{zeropartsum}
Let $A$ be a finite set.
Let $F, G \in \mathbb Z^A_0$ be distinct maps of degree $0$.
Then $\deg (F \oplus G) >0$.
\end{lem}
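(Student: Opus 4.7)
The plan is to use the pointwise characterization $\deg(F \oplus G) = \sum_{a \in A} \max\{F(a), G(a)\}$ together with the degree-$0$ hypothesis on $F$.

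First I would observe that for every $a \in A$ we have $\max\{F(a), G(a)\} \geq F(a)$, so summing gives $\deg(F \oplus G) \geq \deg F = 0$. Thus the inequality is at least nonstrict, and the whole content of the lemma is to rule out equality.

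To get strict inequality, I would argue that there must exist some $a_1 \in A$ with $G(a_1) > F(a_1)$. Indeed, since $F \neq G$, pick any $a_0$ with $F(a_0) \neq G(a_0)$; without loss of generality $F(a_0) > G(a_0)$. Both $F$ and $G$ have degree $0$, so $\sum_{a \in A}(F(a) - G(a)) = 0$. The term at $a_0$ is strictly positive, so some other term $F(a_1) - G(a_1)$ must be strictly negative, giving the desired $a_1$.

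At that point $a_1$ we have $\max\{F(a_1), G(a_1)\} = G(a_1) > F(a_1)$, while at every other $a$ the inequality $\max\{F(a), G(a)\} \geq F(a)$ still holds. Summing yields $\deg(F \oplus G) > \sum_{a \in A} F(a) = 0$, which is the claim. The argument is elementary; I do not anticipate any obstacle beyond noting that the degree-$0$ hypothesis on both $F$ and $G$ is essential, since it is what forces differences in one coordinate to be compensated by opposite differences elsewhere.
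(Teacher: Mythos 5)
Your proof is correct and follows essentially the same route as the paper's: both bound $\deg(F\oplus G)=\sum_{a}\max\{F(a),G(a)\}$ from below by the degree of one of the two maps with a single strictly larger term substituted at a point where the maps differ. The only cosmetic difference is that the paper takes $F(a_0)>G(a_0)$ without loss of generality and compares against $\deg G$, whereas you locate a point with $G(a_1)>F(a_1)$ (via the cancellation of differences) and compare against $\deg F$.
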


\begin{proof}
We may assume that $F(a_0) > G(a_0)$ for some $a_0 \in A$.
Then
\[ \deg (F \oplus G) = \sum_{a \in A} \max \{ F(a), G(a) \} \geq F(a_0) + \sum_{a \in A \smallsetminus \{ a_0 \}} G(a)  > \deg G = 0. \qedhere \]
\end{proof}

\begin{lem}
\label{zeropartsum2}
Let $A$ be a finite set.
Let $F_1, \ldots, F_k \in \mathbb Z^A_0$.
If $F_1 \oplus \cdots \oplus F_k \in \mathbb Z^A_0$, then $F_1 = \cdots = F_k$.
\end{lem}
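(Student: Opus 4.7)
The plan is to give a direct, one-shot argument rather than induction on $k$. Set $S := F_1 \oplus \cdots \oplus F_k$; the goal is to show $F_i = S$ for every $i$, which of course forces $F_1 = \cdots = F_k$.

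First I would record the pointwise domination: for each $a \in A$ and each $i$,
\[
S(a) = \max_j F_j(a) \geq F_i(a).
\]
Summing over $a \in A$ gives $\deg S \geq \deg F_i = 0$, a bound we already know from the general property $\deg(F\oplus G) \geq \max\{\deg F, \deg G\}$ listed just before Proposition \ref{imphi}.

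The hypothesis is that $S \in \mathbb Z^A_0$, i.e.\ $\deg S = 0$. Combined with $\deg F_i = 0$, this yields
\[
\sum_{a \in A} \bigl(S(a) - F_i(a)\bigr) = \deg S - \deg F_i = 0,
\]
while every summand $S(a) - F_i(a)$ is nonnegative by the pointwise domination. A sum of nonnegative integers that equals zero must be identically zero, so $S(a) = F_i(a)$ for every $a \in A$, i.e.\ $S = F_i$. Since this holds for every $i$, the maps $F_1, \ldots, F_k$ all coincide with $S$.

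There is really no serious obstacle here; the only delicate point is to recognize that one should not try to induct via Lemma \ref{zeropartsum} (which would work, but less cleanly), but instead exploit the pointwise inequality $S \geq F_i$ together with the equality of the two degrees to force termwise equality. This same idea also recovers Lemma \ref{zeropartsum} as the special case $k=2$.
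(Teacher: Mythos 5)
Your proof is correct, and it takes a different route from the paper's. The paper argues by contradiction: if $F_i \neq F_j$ for some pair, then $\deg(F_1 \oplus \cdots \oplus F_k) \geq \deg(F_i \oplus F_j) > 0$ by Lemma \ref{zeropartsum}, contradicting the hypothesis. So the paper reduces everything to the two-element case and leans on the previously established pairwise lemma. You instead give a direct, self-contained argument: $S := F_1 \oplus \cdots \oplus F_k$ dominates each $F_i$ pointwise, and since $\deg S = \deg F_i = 0$, the sum of the nonnegative differences $S(a) - F_i(a)$ vanishes, forcing $S = F_i$ termwise. What your approach buys is independence from Lemma \ref{zeropartsum} (indeed, as you note, it subsumes it: the contrapositive for $k=2$, together with $\deg(F\oplus G) \geq 0$, is exactly that lemma) and a marginally sharper conclusion, namely that each $F_i$ equals the common maximum $S$. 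What the paper's approach buys is brevity in context: having already proved the pairwise statement, the $k$-fold version follows in two lines from the monotonicity of $\deg$ under enlarging the max --- which, it is worth noticing, is itself the same pointwise-domination observation you use.
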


\begin{proof}
Assume that $F_i \neq F_j$ for some $i \neq j$.
Then $\deg (F_1 \oplus \cdots \oplus F_k) \geq \deg (F_i \oplus F_j) >0$ by Lemma \ref{zeropartsum}.
This is a contradiction.
\end{proof}

\begin{lem}
\label{zeropart}
Let $A$ be a finite set.
Let $R_0$ be a (multiplicative) subgroup of $\mathbb Z^A_0$ and $R$ be the subsemiring of $\Zpos A$ generated by $R_0$.
Then $R_0 = R \cap \mathbb Z^A_0$.
\end{lem}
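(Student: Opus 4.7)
The plan is to prove the two inclusions $R_0 \subset R \cap \mathbb Z^A_0$ and $R \cap \mathbb Z^A_0 \subset R_0$ separately. The first is immediate: $R_0 \subset R$ by construction, and $R_0 \subset \mathbb Z^A_0$ by assumption.

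For the reverse inclusion, I would first simplify the description of $R$. Recall that the subsemiring generated by a set $S$ consists of all finite $\oplus$-sums of finite $\odot$-products of elements of $S$ (together with the neutral elements $-\infty$ and the multiplicative identity). The crucial observation is that $R_0$ is assumed to be a \emph{multiplicative subgroup}, so in particular $R_0$ is closed under $\odot$ and contains the multiplicative identity (the constant-$0$ map, which lies in $\mathbb Z^A_0$). Consequently every $\odot$-product of elements of $R_0$ is already an element of $R_0$, and so every element of $R \smallsetminus \{ -\infty \}$ can be written as a pure $\oplus$-sum $G_1 \oplus \cdots \oplus G_k$ with $G_1, \ldots, G_k \in R_0$.

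Now take $F \in R \cap \mathbb Z^A_0$. In particular $F \neq -\infty$, so by the previous paragraph $F = G_1 \oplus \cdots \oplus G_k$ with each $G_i \in R_0 \subset \mathbb Z^A_0$. Since $\deg F = 0$, Lemma \ref{zeropartsum2} applies and forces $G_1 = \cdots = G_k$. Hence $F = G_1 \in R_0$, which finishes the proof.

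There is no real obstacle: the argument is just the reduction step (products in $R_0$ stay in $R_0$ because $R_0$ is a group), followed by a direct invocation of Lemma \ref{zeropartsum2}. The only subtlety worth stating explicitly is that one must ensure the multiplicative identity of $\Zpos{A}$ coincides with the identity of $\mathbb Z^A_0$ so that the subsemiring generated by $R_0$ does not accidentally introduce new units outside of $R_0$; this is automatic since both identities equal the constant-$0$ map on $A$.
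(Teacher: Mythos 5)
Your proof is correct and follows essentially the same route as the paper: since $R_0$ is closed under multiplication, every nonzero element of $R$ is a finite $\oplus$-sum of elements of $R_0$, and Lemma \ref{zeropartsum2} then forces any such sum of degree $0$ to collapse to a single element of $R_0$. The extra remark about the multiplicative identity is harmless but not needed beyond what the paper already records.
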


\begin{proof}
The inclusion $R_0 \subset R \cap \mathbb Z^A_0$ is clear.
Since $R_0$ is closed with respect to multiplication, any element of $R$ is the sum of finitely many elements of $R_0$.
If $F_1 \oplus \cdots \oplus F_k \in R \cap \mathbb Z^A_0$ for some $F_1, \ldots, F_k \in R_0$, by Lemma \ref{zeropartsum2}, we have $F_1 = \cdots = F_k$.
Hence $F_1 \oplus \cdots \oplus F_k = F_1 \in R_0$, which means $R \cap \mathbb Z^A_0 \subset R_0$.
\end{proof}

\begin{ex}
\label{exY}
Let $\bfit d_1 = \matba 12, \bfit d_2 = \matba 31$ and $\bfit d_3 = \matba {-4}{-3}$.
Let $\rho_i$ be the ray spanned by $\bfit d_i$ for $i=1, 2, 3$, and $Y := \{ \{ {\mathbf 0} \}, \rho_1, \rho_2, \rho_3 \}$.
Consider the 1-dimensional tropical fan $Y = (Y, \omega_Y)$, where every weight $\omega_Y(\rho_i)$ is $1$.
We show that the map $\phi_Y : \mathbb B[x^{\pm}, y^{\pm} ]_{\mathrm{fcn}} \to \Zpos {Y(1)}$ is not surjective.
Let $R_0$ be the subgroup of $\mathbb Z_0^{Y(1)}$ generated by $\{ \phi_Y([x]), \phi_Y([y]) \}$.
Since $\Im(\phi_Y)$ is generated by $R_0$ as a semiring, by Lemma \ref{zeropart}, we have $\Im(\phi_Y) \cap \mathbb Z_0^{Y(1)} = R_0$.
Thus, in order to show that $\phi_Y$ is not surjective, it is enough to show that $R_0 \neq \mathbb Z_0^{Y(1)}$.
Consider the bijection
$$\mathbb Z_0^{Y(1)} \to \mathbb Z^2, \quad F \mapsto (F(\rho_1), F(\rho_2)).$$
Since the image of $\phi_Y([x]), \phi_Y([y])$ are $(1, 3), (2, 1)$ respectively,
it is enough to show that $\mathbb Z^2$ is not generated by $\{ (1,3), (2,1) \}$ as a (standard additive) group.
This is true because $\det \matbb 1321 = -5 \neq \pm 1$.
\end{ex}

\begin{ex}
\label{exZ}
Let $\bfit e_1 = \matba 10, \bfit e_2 = \matba 01$.
Let $\rho_1^+, \rho_1^-, \rho_2^+, \rho_2^-$ be the ray spanned by $\bfit e_1, -\bfit e_1, \bfit e_2, -\bfit e_2$ respectively, and $Z := \{ \{ {\mathbf 0} \}, \rho_1^+, \rho_1^-, \rho_2^+, \rho_2^- \}$.
Thus the 1-dimensional tropical fan $Z = (Z, \omega_Z)$ is defined, where every weight is $1$.
We show that $\phi_Z : \mathbb B[x^{\pm}, y^{\pm} ]_{\mathrm{fcn}} \to \Zpos {Z(1)}$ is not surjective.
Let $R_0$ be the subgroup of $\mathbb Z_0^{Z(1)}$ generated by $\{ \phi_Z([x]), \phi_Z([y]) \}$.
Similar to previous example, it is enough to show that $R_0 \neq \mathbb Z_0^{Z(1)}$.
This is true because $\mathbb Z_0^{Z(1)}$ is a free abelian group of rank $3$, and hence not generated by two elements.
\end{ex}

Let $A$ be a finite set.
We sometimes use column vectors with entries in $\ZApos$.
When we denote $\bfit F := {}^t \! \matac {F_1} {\cdots} {F_n}$, the notation $\bfit F(a)$ means the column vector ${}^t \! \matac {F_1(a)} {\cdots} {F_n(a)}$.
For a homomorphism $\phi : \Bxpmfcn \to \Zpos A$, we denote $\bfit F_{\phi} := {}^t\!\matac{\phi([x_1])}{\cdots}{\phi([x_n])}$.
We can reconstruct $\phi$ by $\bfit F_{\phi}$ because $\phi$ is determined by the images of $[x_1], \ldots, [x_n]$.

\subsection{Realizable homomorphisms}
\label{subsec:RealHom}
What kind of homomorphism can be realized as $\phi_X$ for some 1-dimensional tropical fan $X$?
In this section, we give an answer for this question.

Throughout this section, we fix finite sets $A, B$.

\begin{dfn}
Let $\sigma : A \to B$ be a map.
Then we define the semiring homomorphism $\sigma^* : \Zpos B \to \Zpos A$ as $\sigma^* (F) = F \circ \sigma$.
\end{dfn}

If $\sigma : A \to B$ is bijection, obviously $\sigma^*$ is an isomorphism.

\begin{dfn}
Two homomorphisms $\phi_1 : \Bxpmfcn \to \Zpos A$ and $\phi_2 : \Bxpmfcn \to \Zpos B$ are \textit{equivalent} if there exists a bijection $\sigma : A \to B$ such that $\phi_1 = \sigma^* \circ \phi_2$.
\end{dfn}

Clearly, this gives an equivalence relation.

\begin{dfn}
A homomorphism $\phi:\Bxpmfcn \to \Zpos A$ is \textit{realizable} if $\phi$ is equivalent to $\phi_X$ for some 1-dimensional tropical fan $X$.
\end{dfn}

\begin{lem}
\label{xtodeg0}
Let $\phi:\Bxpmfcn \to \Zpos A$ be an arbitrary homomorphism.
Then $\phi([x_i]) \in \mathbb Z^A_0$ for any $i = 1, \ldots, n$.
\end{lem}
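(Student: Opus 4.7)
The plan is to invoke invertibility. In $\Bxpmfcn$, the class $[x_i]$ is a unit because $[x_i] \odot [x_i^{-1}] = [1]$, where $[1]$ is the multiplicative identity (the constant function $0$). Since a semiring homomorphism preserves multiplication and the multiplicative identity, $\phi([x_i]) \odot \phi([x_i^{-1}]) = 1_{\Zpos A}$, so $\phi([x_i])$ is a unit in $\Zpos A$.

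Next, I would recall the computation made earlier in the excerpt (just after the definition of $\mathbb Z^A_{\pos}$ and $\mathbb Z^A_0$), where it was shown that the unit group of $\Zpos A$ is exactly $\mathbb Z^A_0$: the element $-\infty$ is not a unit, and for any $F \in \Zpos A \smallsetminus \{-\infty\}$ with inverse $G$, one has $0 = \deg(F \odot G) = \deg F + \deg G \geq \deg F \geq 0$, forcing $\deg F = 0$; conversely any $F \in \mathbb Z^A_0$ is inverted by $a \mapsto -F(a)$. Applying this, $\phi([x_i]) \in \mathbb Z^A_0$, which is the claim.

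There is no real obstacle here; the statement is essentially the observation that semiring homomorphisms send units to units, combined with the already-established identification of the unit group of $\Zpos A$ with $\mathbb Z^A_0$. The only subtlety to mention explicitly is that $\phi([x_i]) \neq -\infty$ (which follows from it being a unit, since $-\infty$ annihilates everything under $\odot$), so $\phi([x_i])$ really lies in $\mathbb Z^A \subset \Zpos A$ with finite values at every $a \in A$.
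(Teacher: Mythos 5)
Your proof is correct and matches the paper's argument exactly: the paper's one-line proof is ``this is because $[x_i]$ is invertible and $\mathbb Z^A_0$ is the unit group of $\Zpos A$,'' relying on the same earlier computation of the unit group. Your added remark that $\phi([x_i]) \neq -\infty$ is a harmless elaboration of the same point.
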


\begin{proof}
This is because $[x_i]$ is invertible and $\mathbb Z^A_0$ is the unit group of $\Zpos A$.
\end{proof}

\begin{prop}
\label{propreal}
Let $\phi:\Bxpmfcn \to \Zpos A$ be an arbitrary homomorphism.
Then the following are equivalent:
\begin{enumerate}[(1)]
\item $\phi$ is realizable.
\item $\bfit F_{\phi}(a) \neq {\mathbf 0}$ for any $a \in A$, and any $\bfit F_{\phi}(a)$ is not the scalar multiple of any $\bfit F_{\phi}(a')$ ($a' \neq a$) by a positive real number.
\end{enumerate}
\end{prop}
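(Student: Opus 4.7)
The plan is to prove both implications by exploiting the explicit description of $\bfit F_{\phi_X}$ given in Proposition \ref{phitostar}, together with the observation that a homomorphism $\phi:\Bxpmfcn \to \Zpos A$ is completely determined by $\bfit F_\phi$, since $\Bxpmfcn$ is generated as a semiring by $[x_1^{\pm}], \ldots, [x_n^{\pm}]$ and inverses in the unit group $\mathbb Z_0^A$ of $\Zpos A$ are unique.

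For the direction $(1) \Rightarrow (2)$, I would unfold the definition of realizability. Assuming $\phi = \sigma^* \circ \phi_X$ for some bijection $\sigma:A \to X(1)$, the column vector $\bfit F_\phi(a)$ equals $\bfit F_{\phi_X}(\sigma(a)) = w_{\sigma(a)} \bfit d_{\sigma(a)}$ by Proposition \ref{phitostar}. The right-hand side is nonzero because $w_{\sigma(a)} > 0$ and $\bfit d_{\sigma(a)} \neq \mathbf 0$. Moreover, if $\bfit F_\phi(a)$ were a positive real multiple of $\bfit F_\phi(a')$ for $a \neq a'$, then $\bfit d_{\sigma(a)}$ and $\bfit d_{\sigma(a')}$ would span the same ray; being primitive, they would be equal, contradicting the injectivity of $\sigma$.

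For the direction $(2) \Rightarrow (1)$, I would explicitly construct a tropical fan realizing $\phi$. By Lemma \ref{xtodeg0}, $\phi([x_i]) \in \mathbb Z_0^A$ for each $i$, so $\sum_{a \in A} \phi([x_i])(a) = 0$; reading these sums coordinatewise gives $\sum_{a \in A} \bfit F_\phi(a) = \mathbf 0$. For each $a \in A$, let $w_a \in \mathbb Z_{>0}$ be the greatest common divisor of the coordinates of the nonzero integer vector $\bfit F_\phi(a)$, set $\bfit d_a := \bfit F_\phi(a)/w_a$ (which is primitive), and let $\rho_a$ be the ray spanned by $\bfit d_a$. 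Condition $(2)$ guarantees that $a \mapsto \rho_a$ is injective, so we can form the weighted $1$-dimensional fan $X := \{\{\mathbf 0\}, \rho_a : a \in A\}$ with $\omega_X(\rho_a) := w_a$. The balancing condition for $X$ reads $\sum_a w_a \bfit d_a = \sum_a \bfit F_\phi(a) = \mathbf 0$, which we have just verified, so $X$ is a $1$-dimensional tropical fan.

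Finally, I would let $\sigma:A \to X(1)$ be the bijection $a \mapsto \rho_a$ and check $\phi = \sigma^* \circ \phi_X$ on the generators $[x_i]$: by Proposition \ref{phitostar}, the $i$-th coordinate of $\bfit F_{\phi_X}(\rho_a)$ is $w_a (\bfit d_a)_i = \bfit F_\phi(a)_i$, so $(\sigma^* \circ \phi_X)([x_i])(a) = \phi([x_i])(a)$. Since both homomorphisms agree on the generating set $\{[x_i]\}$ (and hence automatically on $\{[x_i^{-1}]\}$ by uniqueness of multiplicative inverses in $\mathbb Z_0^A$), they agree everywhere, proving $\phi$ is realizable. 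The only mildly delicate point is confirming that condition $(2)$ gives the injectivity of $a \mapsto \rho_a$, but this is immediate: two integer vectors spanning the same ray must be positive rational, hence positive real, multiples of each other.
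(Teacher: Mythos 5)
Your proposal is correct and follows essentially the same route as the paper: the forward direction unfolds Proposition \ref{phitostar}, and the converse builds the fan from the decomposition $\bfit F_{\phi}(a) = w_a \bfit d_a$, verifies balancing via Lemma \ref{xtodeg0}, and checks $\phi = \sigma^* \circ \phi_X$ on the generators $[x_i]$. The extra details you supply (why $(1)\Rightarrow(2)$ holds, why $a \mapsto \rho_a$ is injective) are points the paper leaves implicit, and they are handled correctly.
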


\begin{proof}
$(1) \Longrightarrow (2)$: This immediately follows from Proposition \ref{phitostar}.

$(2) \Longrightarrow (1)$: For any $a \in A$, we can uniquely write $\bfit F_{\phi}(a) = w_a \bfit d_a$ for some positive integer $w_a$ and primitive vector $\bfit d_a$.
Let $\rho_a$ be the ray spanned by $\bfit d_a$ and $X := \{ \rho_a \ | \ a \in A \} \cup \{ \{ \mathbf 0 \}\}$.
Note that $\rho_a \neq \rho_{a'}$ if $a \neq a'$ by the assumption $(2)$.
Consider the weighted 1-dimensional fan $X = (X, \omega_X)$, where $\omega_X(\rho_a) = w_a$ for any $a \in A$.
We show that $X$ is a 1-dimensional tropical fan.
By Lemma \ref{xtodeg0}, $\sum_{a \in A}\bfit F_{\phi}(a) = \mathbf 0$, hence $\sum_{a \in A} w_a \bfit d_a = \mathbf 0$.
Therefore, the balancing condition holds, and then $X$ is a 1-dimensional tropical fan.
The map $\sigma : A \to X(1), a \mapsto \rho_a$ is a bijection, and for any $a \in A$,
\begin{align*}
\bfit F_{\sigma^* \circ \phi_X}(a) &= {}^t\!\matac{(\sigma^* \circ \phi_X)([x_1])(a)}{\cdots}{(\sigma^* \circ \phi_X)([x_n])(a)} \\
&= {}^t\!\matac{(\sigma^*(\phi_X([x_1])))(a)}{\cdots}{(\sigma^*(\phi_X([x_n])))(a)} \\
&= {}^t\!\matac{\phi_X([x_1])(\rho_a)}{\cdots}{\phi_X([x_n])(\rho_a)} = w_a \bfit d_a = \bfit F_{\phi}(a),
\end{align*}
which means that $\bfit F_{\sigma^* \circ \phi_X} = \bfit F_{\phi}$.
Since $\phi$ can be reconstructed by $\bfit F_{\phi}$, we have $\sigma^* \circ \phi_X = \phi$.
Hence $\phi$ is realizable.
\end{proof}

\begin{prop}
Let $\phi:\Bxpmfcn \to \Zpos A$ be a realizable homomorphism.
Then the 1-dimensional tropical fan $X$ such that $\phi$ is equivalent to $\phi_X$ is unique.
\end{prop}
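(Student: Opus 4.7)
The plan is to show that the weighted evaluation vector $\bfit F_\phi$ encodes exactly the primitive generators (with weights) of the rays of any fan realizing $\phi$, so the fan is determined up to equality by $\phi$ itself.

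Suppose $\phi$ is equivalent to $\phi_X$ and to $\phi_Y$ for two $1$-dimensional tropical fans $X \subset \mathbb R^n$ and $Y \subset \mathbb R^n$. By definition of equivalence, there exist bijections $\sigma_X : A \to X(1)$ and $\sigma_Y : A \to Y(1)$ such that $\phi = \sigma_X^* \circ \phi_X = \sigma_Y^* \circ \phi_Y$. Evaluating on $[x_i]$ for $i = 1, \ldots, n$ and reading off the $i$-th coordinates gives, for every $a \in A$,
$$\bfit F_\phi(a) = \bfit F_{\phi_X}(\sigma_X(a)) = \bfit F_{\phi_Y}(\sigma_Y(a)).$$

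Now apply Proposition \ref{phitostar} to both $X$ and $Y$: for every ray $\rho \in X(1)$ we have $\bfit F_{\phi_X}(\rho) = w_\rho \bfit d_\rho$ with $\bfit d_\rho$ the primitive direction vector of $\rho$ and $w_\rho = \omega_X(\rho)$ recoverable as the gcd of the entries of $\bfit F_{\phi_X}(\rho)$; analogously for $Y$. Since a nonzero integer vector decomposes uniquely as a positive integer times a primitive integer vector, the equality $\bfit F_{\phi_X}(\sigma_X(a)) = \bfit F_{\phi_Y}(\sigma_Y(a))$ forces
$$\bfit d_{\sigma_X(a)} = \bfit d_{\sigma_Y(a)} \qquad \text{and} \qquad \omega_X(\sigma_X(a)) = \omega_Y(\sigma_Y(a))$$
for every $a \in A$. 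In particular $\sigma_X(a)$ and $\sigma_Y(a)$ are the same ray in $\mathbb R^n$.

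Since $\sigma_X$ and $\sigma_Y$ are bijections from $A$ onto $X(1)$ and $Y(1)$ respectively, the image $\{\sigma_X(a) : a \in A\} = X(1)$ coincides with $\{\sigma_Y(a) : a \in A\} = Y(1)$ as collections of rays in $\mathbb R^n$, and the weight functions $\omega_X$ and $\omega_Y$ agree on this common set. Including the cone $\{\mathbf 0\}$ in both fans, we conclude $(X, \omega_X) = (Y, \omega_Y)$ as weighted $1$-dimensional tropical fans.

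There is no real obstacle: the only content is the uniqueness of the primitive-vector/positive-integer decomposition of a nonzero integer vector, which is what made the \emph{weighted} evaluation map (rather than the unweighted one) the right object in the first place, as explained in Remark \ref{rem:reconst}.
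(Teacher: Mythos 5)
Your proof is correct and follows essentially the same route as the paper: both rest on Proposition \ref{phitostar}, i.e., that $\bfit F_{\phi}(a) = \bfit F_{\phi_X}(\sigma(a)) = w_{\sigma(a)}\bfit d_{\sigma(a)}$ together with the unique decomposition of a nonzero integer vector as a positive integer times a primitive vector determines both the ray and its weight. You merely spell out with two fans $X$ and $Y$ what the paper compresses into the single sentence ``$\phi$ determines $(X,\omega_X)$.''
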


\begin{proof}
Let $X = (X, \omega_X)$ be a 1-dimensional tropical fan such that $\phi$ is equivalent to $\phi_X$.
Let $\sigma : A \to X(1)$ be a bijection such that $\sigma^* \circ \phi_X = \phi$.
Thus, for any $a \in A$, $\bfit F_{\phi}(a) = \bfit F_{\phi_X}(\sigma(a))$.
By Proposition \ref{phitostar}, the ray $\sigma(a)$ and its weight are determined by the vector $\bfit F_{\phi_X}(\sigma(a))$, and hence by $\bfit F_{\phi}(a)$.
Thus $\phi$ determines $(X, \omega_X)$.
\end{proof}

The fan $X$ in the above proposition is called \textit{the fan defined by} $\phi$.
Clearly, two realizable homomorphisms define the same fan if and only if they are equivalent.
Thus we have the following result.

\begin{prop}
The correspondence $X \mapsto \phi_X$ gives a bijection from the set of 1-dimensional tropical fans in $\mathbb R^n$ to the set of the equivalence classes of semiring homomorphisms $\phi:\Bxpmfcn \to \Zpos A$ for some finite set $A$ satisfying the following condition:
$\bfit F_{\phi}(a) \neq {\mathbf 0}$ for any $a \in A$, and any $\bfit F_{\phi}(a)$ is not the scalar multiple of any $\bfit F_{\phi}(a')$ ($a' \neq a$) by a positive real number.
\end{prop}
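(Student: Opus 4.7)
The proposition essentially packages together Proposition \ref{propreal} and the uniqueness result immediately preceding it, so my plan is to assemble the bijection by verifying well-definedness, injectivity, and surjectivity, each of which follows from one of the already established facts.

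First, I would check that the correspondence $X \mapsto [\phi_X]$ (the equivalence class) actually lands in the stated set. Given a 1-dimensional tropical fan $X = (X,\omega_X)$, Proposition \ref{phitostar} tells us that $\bfit F_{\phi_X}(\rho) = w_\rho \bfit d_\rho$ for every $\rho \in X(1)$. Since $\bfit d_\rho \neq \mathbf 0$ and distinct rays have direction vectors that are not positive scalar multiples of each other (two rational rays agree once their primitive direction vectors do), both conditions on $\bfit F_\phi$ hold. So $\phi_X$ is a legitimate element of the target set.

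Next, for surjectivity, take any homomorphism $\phi : \Bxpmfcn \to \Zpos A$ satisfying the condition. By the direction $(2) \Longrightarrow (1)$ of Proposition \ref{propreal}, $\phi$ is realizable, i.e., equivalent to $\phi_X$ for some 1-dimensional tropical fan $X$. Hence $[\phi] = [\phi_X]$ lies in the image. For injectivity, suppose $X$ and $X'$ are 1-dimensional tropical fans with $[\phi_X] = [\phi_{X'}]$. Since $\phi_X$ is a realizable homomorphism, the uniqueness statement in the preceding proposition (\emph{the fan defined by $\phi$}) shows that the fan realizing a given equivalence class is unique, so $X = X'$.

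The only small point of care, and the place one might expect an obstacle, is verifying that the equivalence relation used in the target set matches the uniqueness statement exactly: two realizable homomorphisms define the same fan if and only if they are equivalent in the sense of Definition preceding Proposition \ref{propreal}. This is precisely the remark made right after the uniqueness proposition, so no new argument is needed. Assembling these three observations yields the bijection, and the proof is essentially a one-paragraph citation of the previous two propositions.
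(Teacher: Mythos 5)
Your proposal is correct and follows exactly the route the paper intends: the paper states this proposition without a separate proof, presenting it as an immediate consequence of Proposition \ref{propreal} (surjectivity onto the stated set), the uniqueness proposition (injectivity), and the remark that two realizable homomorphisms define the same fan iff they are equivalent. Your write-up just makes these three citations explicit, which is fine.
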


Intuitively, for a 1-dimensional tropical fan $X \subset \mathbb R^n$, one may consider that the semirings $\Bxpmfcn$ and $\Im(\phi_X)$ correspond to $\mathbb R^n$ and $X$ respectively, and the homomorphism $\phi_X : \Bxpmfcn \to \Im(\phi_X)$ corresponds to  the inclusion map $X \hookrightarrow \mathbb R^n$.
This observation suggests that there is an ``abstract 1-dimensional tropical fan'' defined by the semiring $\Im(\phi_X)$.
We do not define it in this paper because we have not known the precise definition yet.

The following lemma is used in Section \ref{sec:Morphisms}.

\begin{lem}
\label{unitgroup}
Let $R_0$ be a (multiplicative) subgroup of $\mathbb Z^A_0$ and $R$ be the subsemiring of $\Zpos A$ generated by $R_0$.
Then the unit group of $R$ is $R_0 = R \cap \mathbb Z^A_0$.
In particular, for any homomorphism $\phi : \Bxpmfcn \to \Zpos A$, the unit group of $\Im(\phi)$ is $\Im(\phi) \cap \mathbb Z^A_0$.
\end{lem}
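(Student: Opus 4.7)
The plan is to deduce the lemma from two facts already established: that $\mathbb Z^A_0$ is precisely the unit group of the ambient semiring $\Zpos A$ (proved in the discussion just before Example \ref{exL}), and that $R \cap \mathbb Z^A_0 = R_0$ (Lemma \ref{zeropart}). Given these, the only real content is to identify the units of $R$, not just those of $\Zpos A$.

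First I would note that it suffices to prove the unit group of $R$ equals $R \cap \mathbb Z^A_0$, because the equality $R_0 = R \cap \mathbb Z^A_0$ is exactly Lemma \ref{zeropart}. For the inclusion from units of $R$ into $R \cap \mathbb Z^A_0$, let $F$ be a unit of $R$ with inverse $G \in R$. Since $R \subset \Zpos A$, the pair $(F, G)$ witnesses that $F$ is also a unit of $\Zpos A$, hence $F \in \mathbb Z^A_0$; thus $F \in R \cap \mathbb Z^A_0$. For the reverse inclusion, take $F \in R \cap \mathbb Z^A_0$; by Lemma \ref{zeropart} we have $F \in R_0$, and since $R_0$ is assumed to be a subgroup of $\mathbb Z^A_0$, the inverse $F^{-1}$ (given pointwise by $a \mapsto -F(a)$) lies in $R_0 \subset R$. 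Hence $F$ is a unit of $R$.

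For the ``in particular'' statement, the strategy is to package the images of $[x_1], \ldots, [x_n]$ into a subgroup to which the first part applies. By Lemma \ref{xtodeg0} each $\phi([x_i])$ lies in $\mathbb Z^A_0$, so I would set $R_0$ to be the subgroup of $\mathbb Z^A_0$ generated (multiplicatively) by $\phi([x_1]), \ldots, \phi([x_n])$. Since $\phi([x_i^{-1}]) = \phi([x_i])^{-1}$ also lies in $R_0$, and $\Bxpmfcn$ is generated as a semiring by $\{[x_1],[x_1^{-1}],\ldots,[x_n],[x_n^{-1}]\}$, the image $\Im(\phi)$ is generated as a semiring by $R_0$. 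Applying the first part of the lemma to this $R_0$ and $R = \Im(\phi)$ gives the conclusion.

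No step looks genuinely difficult; the only thing to be careful about is the ``units transfer'' argument in Step 2 above, where I rely on the fact that being a unit of $R$ automatically implies being a unit of the ambient $\Zpos A$, because the candidate inverse witnesses the identity in both semirings (the multiplicative identity of $R$ is inherited from $\Zpos A$, namely the constant map $0$). Everything else is a direct combination of Lemma \ref{zeropart} and Lemma \ref{xtodeg0}.
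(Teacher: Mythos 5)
Your proof is correct and follows essentially the same route as the paper: both reduce to the chain $R_0 \subset (\text{units of } R) \subset R \cap \mathbb Z^A_0 = R_0$ using Lemma \ref{zeropart} and the fact that units of $\Zpos A$ have degree $0$, and both handle the ``in particular'' part by observing that $\Im(\phi)$ is generated by the subgroup of $\mathbb Z^A_0$ generated by the $\phi([x_i])$. Your extra care about transferring unithood from $R$ to the ambient $\Zpos A$ is a point the paper glosses over slightly, but the argument is the same.
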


\begin{proof}
The equality $R_0 = R \cap \mathbb Z^A_0$ follows from Lemma \ref{zeropart}.
Let $U$ be the unit group of $R$.
Since every element of $R_0$ is invertible, $R_0 \subset U$.
Since any unit of $R$ has degree $0$, $U \subset R \cap \mathbb Z^A_0$.
Thus $R_0 \subset U \subset R \cap \mathbb Z^A_0 = R_0$, which means that $U = R_0 = R \cap \mathbb Z^A_0$.

For any homomorphism $\phi : \Bxpmfcn \to \Zpos A$, the image $\Im(\phi)$ is generated by $\{ \phi([x_1]), \phi([x_1])^{-1}, \ldots, \phi([x_n]), \phi([x_n])^{-1} \} \subset \mathbb Z^A_0$.
Thus $\Im(\phi)$ is generated by the subgroup of $\mathbb Z^A_0$ generated by $\{ \phi([x_1]), \ldots, \phi([x_n]) \}$.
Hence the latter statement follows from the former.
\end{proof}

\subsection{Images of realizable homomorphisms}

We study properties of the images of realizable homomorphisms.
We fix a finite set $A$.

\begin{dfn}
A subsemiring $R$ of $\ZApos$ is \textit{realizable} if $R = \Im(\phi)$ for some realizable homomorphism $\phi$. 
\end{dfn}

We give a condition for a subsemiring of $\ZApos$ to be realizable.
We use the following lemma.

\begin{lem}
\label{universal}
For any elements $F_1, \ldots, F_n \in \mathbb Z^A_0$, there exists a unique semiring homomorphism $\phi : \Bxpmfcn \to \ZApos$ such that $\phi([x_i]) = F_i$ for any $i=1, \ldots, n$.
\end{lem}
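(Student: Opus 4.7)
The plan is to reduce this to the analogous statement Lemma \ref{universal1} in spirit: first build a homomorphism out of the polynomial semiring $\Bxpm$, then descend to $\Bxpmfcn$ by checking that functionally equivalent polynomials have equal images.

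For uniqueness, I would note that each $F_i \in \mathbb Z^A_0$ is a unit of $\ZApos$ (since $\mathbb Z^A_0$ is the unit group of $\ZApos$, as observed just after the definition of $\mathbb Z^A_{\pos}$). Hence $\phi([x_i^{-1}])$ must equal the multiplicative inverse of $F_i$ in $\ZApos$, and since $\Bxpmfcn$ is generated as a semiring by $\{[x_1],[x_1^{-1}],\ldots,[x_n],[x_n^{-1}]\}$, any such $\phi$ is forced.

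For existence, I would invoke the earlier lemma that produces, from the invertible elements $F_1,\ldots,F_n$ in the idempotent semiring $\ZApos$, a unique semiring homomorphism $\phi_0 : \Bxpm \to \ZApos$ with $\phi_0(x_i) = F_i$. The task is then to show that $\phi_0$ factors through the quotient $\Bxpm \to \Bxpmfcn$, i.e.\ that $[P]=[Q]$ in $\Bxpmfcn$ implies $\phi_0(P)=\phi_0(Q)$ in $\ZApos$. For this I would establish the pointwise formula
\[
\phi_0(P)(a) \;=\; P\bigl(F_1(a),\ldots,F_n(a)\bigr) \qquad (a \in A),
\]
valid for every $P \in \Bxpm$, where the right-hand side is the usual tropical evaluation of $P$ at the vector $\bigl(F_1(a),\ldots,F_n(a)\bigr) \in \mathbb R^n$ (which makes sense because $F_i(a) \in \mathbb Z$, no $-\infty$ values appear). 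This follows directly from the definitions of $\oplus$ and $\odot$ on $\ZApos$ as pointwise max and pointwise addition, applied to the expansion of $P$ as a finite sum of monomials.

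Granting this formula, if $[P]=[Q]$ then $P$ and $Q$ agree as functions on $\mathbb R^n$, so in particular they agree at each vector $(F_1(a),\ldots,F_n(a))$; thus $\phi_0(P)(a)=\phi_0(Q)(a)$ for every $a \in A$, which gives $\phi_0(P)=\phi_0(Q)$. So $\phi_0$ descends to the desired homomorphism $\phi : \Bxpmfcn \to \ZApos$ with $\phi([x_i])=F_i$. There is no serious obstacle here; the only mildly subtle point is to write down the pointwise evaluation identity cleanly, which is immediate once one unwinds how monomials in the $F_i$ are computed in $\ZApos$.
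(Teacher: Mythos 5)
Your proposal is correct and follows essentially the same route as the paper's proof: uniqueness from the generating set $\{[x_1],[x_1^{-1}],\ldots,[x_n],[x_n^{-1}]\}$, existence by first defining the homomorphism on $\Bxpm$ via invertibility of the $F_i$, and descent to $\Bxpmfcn$ through the pointwise evaluation identity $\phi_0(P)(a)=P(F_1(a),\ldots,F_n(a))$. No issues.
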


\begin{proof}
Since $\Bxpmfcn$ is generated by $\{ x_1, x_1^{-1}, \ldots, x_n, x_n^{-1} \}$, such $\phi$ is unique if it exists.
We show the existence.
Since $F_1, \ldots, F_n$ are invertible, there exists a unique semiring homomorphism $\psi :  \Bxpm \to \ZApos$ such that $\psi(x_i) = F_i$ for $i=1, \ldots, n$.
We show that $\psi$ induces a homomorphism $\Bxpmfcn \to \ZApos$, i.e., $\psi(P) = \psi(Q)$ if $[P] = [Q]$.

Let $P, Q \in \Bxpm$ be Boolean Laurent polynomials such that $[P]=[Q]$.
Since $\psi$ is a semiring homomorphism,
$$\psi(P(x_1, \ldots, x_n))(a) = P(\psi(x_1)(a), \ldots, \psi(x_n)(a)).$$
Similarly, $\psi(Q(x_1, \ldots, x_n))(a) = Q(\psi(x_1)(a), \ldots, \psi(x_n)(a))$.
Since $[P] = [Q]$, we have $\psi(P)(a) = \psi(Q)(a)$, which means that $\psi(P)=\psi(Q)$.
\end{proof}

\begin{prop}
\label{realiff}
A subsemiring $R$ of $\ZApos$ is realizable if and only if there exist finitely many elements $F_1, \ldots, F_n \in R \cap \mathbb Z^A_0$ satisfying the following conditions:
\begin{enumerate}[$(1)$]
\item The set $\{ F_1, F_1^{-1} \ldots, F_n, F_n^{-1} \}$ generates $R$ as a semiring.
\item Let $\bfit F = {}^t\!\matac{F_1}{\cdots}{F_n}$.
Then $\bfit F(a) \neq {\mathbf 0}$ for any $a \in A$, and any $\bfit F(a)$ is not the scalar multiple of any $\bfit F(a')$ ($a' \neq a$) by a positive real number.
\end{enumerate}
\end{prop}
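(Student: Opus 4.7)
The plan is to derive both directions from the characterization of realizable homomorphisms in Proposition \ref{propreal} combined with the universal property in Lemma \ref{universal}.

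For the forward direction, I would assume $R = \Im(\phi)$ for some realizable homomorphism $\phi : \Bxpmfcn \to \ZApos$, and set $F_i := \phi([x_i])$ for $i = 1, \ldots, n$. By Lemma \ref{xtodeg0}, each $F_i$ lies in $\mathbb Z^A_0$, and each $F_i$ is a unit in $\Im(\phi)$, so $F_i^{-1} \in R$ as well. Since $\Bxpmfcn$ is generated as a semiring by $\{[x_1], [x_1^{-1}], \ldots, [x_n], [x_n^{-1}]\}$, its image $R = \Im(\phi)$ is generated by $\{F_1, F_1^{-1}, \ldots, F_n, F_n^{-1}\}$, giving condition $(1)$. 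The row vector $\bfit F = {}^t\!\matac{F_1}{\cdots}{F_n}$ coincides with $\bfit F_\phi$, so condition $(2)$ is exactly the criterion established in Proposition \ref{propreal}.

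For the reverse direction, I would take $F_1, \ldots, F_n \in R \cap \mathbb Z^A_0$ satisfying $(1)$ and $(2)$ and apply Lemma \ref{universal} to obtain a unique semiring homomorphism $\phi : \Bxpmfcn \to \ZApos$ with $\phi([x_i]) = F_i$. Since $\phi([x_i^{-1}]) = F_i^{-1}$, the image $\Im(\phi)$ is the subsemiring generated by $\{F_1, F_1^{-1}, \ldots, F_n, F_n^{-1}\}$, which equals $R$ by condition $(1)$. Now $\bfit F_\phi = \bfit F$ by construction, so condition $(2)$ is precisely the hypothesis of the implication $(2) \Longrightarrow (1)$ in Proposition \ref{propreal}, which yields that $\phi$ is realizable. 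Therefore $R = \Im(\phi)$ is a realizable subsemiring.

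There is essentially no hard step here: the argument is a bookkeeping exercise that wires together the universal property of $\Bxpmfcn$ (Lemma \ref{universal}) with the already-established realizability criterion (Proposition \ref{propreal}). The only subtlety to watch is ensuring that the $F_i^{-1}$ in condition $(1)$ are genuinely the multiplicative inverses in $\ZApos$ (they are, since $F_i \in \mathbb Z^A_0$ and $\mathbb Z^A_0$ is the unit group of $\ZApos$), so that the homomorphism $\phi$ produced by Lemma \ref{universal} really does have $\Im(\phi)$ generated by the full list $\{F_1, F_1^{-1}, \ldots, F_n, F_n^{-1}\}$ rather than just $\{F_1, \ldots, F_n\}$.
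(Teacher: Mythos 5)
Your proposal is correct and follows the paper's own proof essentially verbatim: both directions use Lemma \ref{xtodeg0} and Proposition \ref{propreal} for the forward implication, and Lemma \ref{universal} plus Proposition \ref{propreal} for the converse. The extra remark about the $F_i^{-1}$ being genuine inverses in $\mathbb Z^A_0$ is a harmless elaboration of what the paper leaves implicit.
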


\begin{proof}
Assume that $R$ is realizable.
Let $\phi : \Bxpmfcn \to \ZApos$ be a realizable homomorphism such that $\Im(\phi) = R$.
Let $F_i := \phi([x_i])$ for $i=1, \ldots, n$.
Then $F_1, \ldots, F_n \in R \cap \mathbb Z^A_0$ by Lemma \ref{xtodeg0}.
These $F_1, \ldots, F_n$ satisfy the condition $(1)$ because $\Bxpmfcn$ is generated by $\{ x_1, x_1^{-1}, \ldots, x_n, x_n^{-1} \}$, and $(2)$ by Proposition \ref{propreal}.

Conversely, assume that there are $F_1, \ldots, F_n \in R \cap \mathbb Z^A_0$ satisfying $(1), (2)$.
By Lemma \ref{universal}, there exists a semiring homomorphism $\phi : \Bxpmfcn \to \ZApos$ such that $\phi([x_i]) = F_i$ for any $i$.
Thus $\phi$ is a realizable homomorphism by Proposition \ref{propreal} and the condition $(2)$.
The image $\Im(\phi)$ is $R$ by the condition $(1)$.
Therefore, $R$ is realizable.
\end{proof}

The conditions we gave in the above proposition do not depend on the choice of generators:

\begin{prop}
\label{notdepend}
Let $R \subset \ZApos$ be a realizable subsemiring.
Let $F_1, \ldots, F_n \in R \cap \mathbb Z^A_0$ be elements such that $R$ is generated by $\{ F_1, F_1^{-1} \ldots, F_n, F_n^{-1} \}$ as a semiring.
Let $\bfit F := {}^t\!\matac{F_1}{\cdots}{F_n}$.
Then $\bfit F(a) \neq {\mathbf 0}$ for any $a \in A$, and any $\bfit F(a)$ is not the scalar multiple of any $\bfit F(a')$ ($a' \neq a$) by a positive real number.
\end{prop}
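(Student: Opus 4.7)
The plan is to pull back to Proposition~\ref{propreal} by comparing the given generators against a known realization of $R$. Since $R$ is realizable, fix a realizable $\phi_0 : \Bxpmfcn \to \ZApos$ with $\Im(\phi_0) = R$; writing $\phi_0 = \sigma^* \circ \phi_X$ one has $\bfit F_{\phi_0}(a) = w_a \bfit d_a$ with $w_a \in \mathbb Z_{>0}$ and $\bfit d_a$ a primitive integer direction vector, and by Proposition~\ref{propreal} no $\bfit d_a$ is a positive scalar multiple of any $\bfit d_{a'}$ for $a' \neq a$. By Lemma~\ref{unitgroup} the unit group of $R$ is $H := R \cap \mathbb Z_0^A$.

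The first real step is to show that $F_1, \ldots, F_n$ generate $H$ as a multiplicative group. Any $F \in H$ is a semiring expression in $\{F_i^{\pm 1}\}$, hence a $\oplus$-sum of $\odot$-products; every product lies in $H$, and by Lemma~\ref{zeropartsum2} a $\oplus$-sum of elements of $H$ that itself lies in $H$ forces all summands to coincide, so $F$ equals a single product of the $F_i^{\pm 1}$. Using this, write $F_i = \phi_0([\bfit x^{\bfit u_i}])$ for some $\bfit u_i \in \mathbb Z^n$, and also write $\phi_0([x_j]) = \prod_i F_i^{b_{ij}} = \phi_0([\bfit x^{\sum_i b_{ij} \bfit u_i}])$ for some integer matrix $B = (b_{ij})$.

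Evaluation at $a$ now yields $\bfit F(a) = w_a M \bfit d_a$, where $M$ is the $n \times n$ integer matrix with rows $\bfit u_1, \ldots, \bfit u_n$; and from $\bfit e_j \cdot \bfit d_a = \sum_i b_{ij}(\bfit u_i \cdot \bfit d_a)$ for all $j$ (obtained by evaluating $\phi_0([x_j]) = \phi_0([\bfit x^{\sum_i b_{ij} \bfit u_i}])$ at $a$ and dividing by $w_a$) one obtains the key vector identity
$$\bfit d_a = B^T M \bfit d_a.$$
From here the two conclusions follow at once. If $M \bfit d_a = \mathbf 0$ then $\bfit d_a = B^T \mathbf 0 = \mathbf 0$, contradicting primitivity, so $\bfit F(a) \neq \mathbf 0$. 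If $\bfit F(a) = t \bfit F(a')$ for some $t > 0$ and $a \neq a'$, then setting $s := tw_{a'}/w_a > 0$ gives $M \bfit d_a = s M \bfit d_{a'}$; applying $B^T$ yields $\bfit d_a = s \bfit d_{a'}$, and primitivity forces $s = 1$ and $\bfit d_a = \bfit d_{a'}$, contradicting the property of $\phi_0$ provided by Proposition~\ref{propreal}.

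The subtle step in the plan is the generation claim for $H$, which is precisely where Lemma~\ref{zeropartsum2} is essential: it rules out a degree-zero element of $R$ being represented as a nontrivial $\oplus$-combination of several distinct degree-zero summands, so the generators $F_i$ that generate $R$ as a semiring actually generate $H$ as a group. Once this is in place, the entire proof reduces to the single linear-algebraic identity $\bfit d_a = B^T M \bfit d_a$, which transfers the defining properties of $\bfit F_{\phi_0}$ to $\bfit F$ without further computation.
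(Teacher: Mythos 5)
Your proof is correct and follows essentially the same route as the paper: both use Lemma \ref{zeropart} (you re-derive it from Lemma \ref{zeropartsum2}) to identify $R \cap \mathbb Z^A_0$ as the group generated by the $F_i$, then express a known-good generating set as integer combinations of the $F_i$ and transfer the non-degeneracy conditions linearly. The paper is slightly more economical — it only needs the one relation $\bfit G = T\bfit F$ (your $B^T$), so your matrix $M$ and the composite identity $\bfit d_a = B^T M \bfit d_a$ are an unnecessary but harmless detour.
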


\begin{proof}
Since $R$ is realizable, there exist maps $G_1, \ldots, G_m \in R \cap \mathbb Z^A_0$ satisfying the following conditions:
\begin{enumerate}[(1)]
\item The set $\{ G_1, G_1^{-1} \ldots, G_m, G_m^{-1} \}$ generates $R$ as a semiring.
\item Let $\bfit G := {}^t\!\matac{G_1}{\cdots}{G_m}$.
Then $\bfit G(a) \neq {\mathbf 0}$ for any $a \in A$, and any $\bfit G(a)$ is not the scalar multiple of any $\bfit G(a')$ ($a' \neq a$) by a positive real number.
\end{enumerate}
Let $R_0$ be the subgroup of $\mathbb Z^A_0$ generated by $\{ F_1, \ldots, F_n \}$.
By Lemma \ref{zeropart}, $R_0 = R \cap \mathbb Z^A_0$.
Hence $G_1, \ldots, G_m \in R_0$.
Thus we can write $G_j = F_1^{t_{1j}} \odot \cdots \odot F_n^{t_{nj}} = t_{1j} F_1 + \cdots + t_{nj} F_n$ for some $t_{ij} \in \mathbb Z$.
This means that there is an $n \times m$ integer matrix $T$ such that $\bfit G = T \bfit F$, where the multiplication in the right hand side is the standard matrix multiplication.
This means $\bfit G(a) = T \bfit F(a)$ for any $a \in A$.
If some $\bfit F(a)$ is $\mathbf 0$, then $\bfit G(a) = T \bfit F(a) = \mathbf 0$, which contradicts the condition $(2)$.
Similarly, if $\bfit F(a) = t \bfit F(a')$ for some $a \neq a'$ and $t > 0$, then $\bfit G(a) = t \bfit G(a')$, which contradicts the condition $(2)$.
\end{proof}

\begin{rem}
Let $R$ be a realizable subsemiring of $\ZApos$.
Giving a surjective homomorphism $\Bxpmfcn \to R$ is equivalent to fixing $n$ elements $F_1, \ldots, F_n \in R \cap Z^A_0$ such that $R$ is generated by $\{ F_1, F_1^{-1} \ldots, F_n, F_n^{-1} \}$ as a semiring.
Thus, intuitively, ``not depend on the choice of generators'' is equivalent to ``not depend on the choice of embedding to $\mathbb R^n$''.
This observation suggests the existence of abstract 1-dimensional tropical fan defined by a realizable subsemiring of $\ZApos$.
\end{rem}

There are more things that do not depend on the choice of generators.
If $t_1 \bfit d_1 + \cdots + t_r \bfit d_r = \mathbf 0$ for $t_1, \ldots, t_r \in \mathbb 
R$ and $\bfit d_1, \ldots, \bfit d_r \in \mathbb R^n$, we say $(t_1, \ldots, t_r)$ is a \textit{linear relation} of $\bfit d_1, \ldots, \bfit d_r$.

\begin{prop}
Let $R \subset \ZApos$ be a realizable subsemiring.
Let $F_1, \ldots, F_n \in R \cap \mathbb Z^A_0$ be elements such that $R$ is generated by $\{ F_1, F_1^{-1} \ldots, F_n, F_n^{-1} \}$ as a semiring.
Let $\bfit F := {}^t\!\matac{F_1}{\cdots}{F_n}$.
Then the linear relations of the vectors $\left( \bfit F(a) \right)_{a \in A}$ do not depend on the choice of $F_1, \ldots, F_n$.
\end{prop}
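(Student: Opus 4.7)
The plan is to transfer linear relations between two generating sets via integer transition matrices, in the same spirit as the proof of Proposition \ref{notdepend}. Let $G_1, \ldots, G_m \in R \cap \mathbb Z^A_0$ be another set of generators of $R$ (i.e., $\{G_1, G_1^{-1}, \ldots, G_m, G_m^{-1}\}$ generates $R$ as a semiring), and set $\bfit G := {}^t\!\matac{G_1}{\cdots}{G_m}$. I will show that the set of linear relations of $(\bfit F(a))_{a \in A}$ and the set of linear relations of $(\bfit G(a))_{a \in A}$ coincide as subsets of $\mathbb R^A$.

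First, by Lemma \ref{zeropart}, the subgroup of $\mathbb Z^A_0$ generated by $\{ F_1, \ldots, F_n \}$ equals $R \cap \mathbb Z^A_0$, and the same holds with $\{ G_1, \ldots, G_m \}$ in place of $\{ F_1, \ldots, F_n \}$. Hence each $G_j$ lies in the subgroup generated by the $F_i$, so we may write (multiplicatively) $G_j = F_1^{t_{1j}} \odot \cdots \odot F_n^{t_{nj}}$ for some integers $t_{ij}$. Setting $T := (t_{ji})$, this yields an $m \times n$ integer matrix with $\bfit G(a) = T\,\bfit F(a)$ for every $a \in A$. Symmetrically, writing each $F_i$ as a multiplicative combination of the $G_j$ gives an $n \times m$ integer matrix $S$ with $\bfit F(a) = S\,\bfit G(a)$.

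Finally, the transfer is immediate. If $(t_a)_{a \in A} \in \mathbb R^A$ satisfies $\sum_{a \in A} t_a \bfit F(a) = \mathbf 0$, then
$$\sum_{a \in A} t_a \bfit G(a) = T \sum_{a \in A} t_a \bfit F(a) = T \mathbf 0 = \mathbf 0,$$
so $(t_a)$ is also a linear relation of $(\bfit G(a))_{a \in A}$; the reverse inclusion follows in the same way using $S$. Hence the two sets of linear relations agree, proving the proposition. The only substantive content is the existence and integrality of the transition matrices $T$ and $S$, which is precisely the step already established in Proposition \ref{notdepend} via Lemma \ref{zeropart}, so no additional obstacle arises.
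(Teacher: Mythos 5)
Your proof is correct and follows essentially the same route as the paper: both obtain an integer transition matrix between the two generating sets via Lemma \ref{zeropart} (as in Proposition \ref{notdepend}) and then push linear relations through it; you merely make explicit the reverse-direction matrix $S$ that the paper leaves to ``the similar argument.''
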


\begin{proof}
Let $G_1, \ldots, G_m \in R \cap \mathbb Z^A_0$ be elements such that $R$ is generated by the set $\{ G_1, G_1^{-1}, \ldots, G_m, G_m^{-1} \}$.
Let $\bfit G := {}^t\!\matac {G_1}{\cdots}{G_m}$.
By the same argument as the proof of Proposition \ref{notdepend}, there exists an integer matrices $T$ such that $\bfit G = T \bfit F$.
If $(t_a)_{a \in A}$ is a linear relation of $\left( \bfit F(a) \right)_{a \in A}$, then
$$\sum_{a \in A} t_a \bfit F(a) = \mathbf 0.$$
Thus
$$\sum_{a \in A} t_a \bfit G(a) = \sum_{a \in A} t_a T \bfit F(a) = T \sum_{a \in A} t_a \bfit F(a) = \mathbf 0,$$
hence $(t_a)_{a \in A}$ is a linear relation of $\left( \bfit G(a) \right)_{a \in A}$.
By the similar argument, any linear relation of $\left( \bfit G(a) \right)_{a \in A}$ is a linear relation of $\left( \bfit F(a) \right)_{a \in A}$.
\end{proof}

This proposition suggests that the abstract 1-dimensional tropical fan defined by a realizable subsemiring of $\ZApos$ should have some ``linear'' structure.

\section{Morphisms}
\label{sec:Morphisms}

In the rest of paper, we fix a positive integer $m$.
The notation $\Bypm$ always means $\mathbb B[y_1^{\pm}, \ldots, y_m^{\pm}]$.

\subsection{Morphisms of realizable homomorphisms}
\label{subsec:MorDef}

In Section \ref{sec:function}, we study the map $X \to \phi_X$ from the set of 1-dimensional tropical fans to the set of realizable homomorphisms.
We make it into a functor.
To do this, we need to define the morphisms of realizable homomorphisms.

We may describe the definition of morphisms of 1-dimensional tropical fans as follows: 
Let $X \subset \mathbb R^n$ and $Y \subset \mathbb R^m$ be 1-dimensional tropical fans.
A morphism $\mu : X \to Y$ is a map $\mu : |X| \to |Y|$ such that there exists a linear map $\mu_0 : \mathbb R^n \to \mathbb R^m$ such that the diagram
$$\xymatrix{
|X| \ar@{^{(}->}[r] \ar[d]_{\mu} & \mathbb R^n \ar[d]_{\mu_0} \\
|Y| \ar@{^{(}->}[r] & \mathbb R^m
}$$
is commutative.
Thus, it is natural that we define the morphisms of realizable homomorphisms as follows:
Let $\phi : \Bxpmfcn \to \ZApos$ and $\psi : \Bypmfcn \to \ZBpos$ be realizable homomorphisms.
A morphism $\nu : \phi \to \psi$ is a semiring homomorphism $\nu : \Im(\phi) \to \Im(\psi)$ such that there exists a semiring homomorphism $\nu_0 : \Bxpmfcn \to \Bypmfcn$ such that the diagram
$$\xymatrix{
\Bxpmfcn \ar[r]^{\phi} \ar[d]_{\nu_0} & \Im(\phi) \ar[d]_{\nu} \\
\Bypmfcn \ar[r]^{\psi} & \Im(\psi)
}$$
is commutative.
However, by the following lemma, we do not need to consider the existence of a homomorphism $\Bxpmfcn \to \Bypmfcn$.

\begin{lem}
Let $A, B$ be finite sets.
Let $\phi : \Bxpmfcn \to \ZApos$ and $\psi : \Bypmfcn \to \ZBpos$ be semiring homomorphisms.
For any semiring homomorphism $\nu : \Im(\phi) \to \Im(\psi)$, there exists a semiring homomorphism $\nu_0 : \Bxpmfcn \to \Bypmfcn$ such that the diagram
$$\xymatrix{
\Bxpmfcn \ar[r]^{\phi} \ar[d]_{\nu_0} & \Im(\phi) \ar[d]_{\nu} \\
\Bypmfcn \ar[r]^{\psi} & \Im(\psi)
}$$
is commutative.
\end{lem}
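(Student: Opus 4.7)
The plan is to build $\nu_0$ by deciding only where the generators $[x_i]$ go and invoking Lemma \ref{universal1}, using Lemma \ref{unitgroup} as the crucial lifting ingredient. Since $\Bxpmfcn$ is generated by $\{[x_1],[x_1^{-1}],\ldots,[x_n],[x_n^{-1}]\}$, any homomorphism on $\Bxpmfcn$ is determined by the images of the $[x_i]$, and by Lemma \ref{universal1} these images may be chosen freely among the classes of Laurent monomials in the $y_j$.

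First I would check that for each $i$, the element $\nu(\phi([x_i]))$ is a unit of $\Im(\psi)$. Indeed, by Lemma \ref{xtodeg0}, $\phi([x_i])\in\mathbb Z^A_0$, so it is a unit of $\Im(\phi)$; since $\nu$ is a semiring homomorphism, it sends units to units, so $\nu(\phi([x_i]))$ is a unit of $\Im(\psi)$. By Lemma \ref{unitgroup}, the unit group of $\Im(\psi)$ equals the subgroup of $\mathbb Z^B_0$ generated by $\{\psi([y_1]),\ldots,\psi([y_m])\}$. Consequently, for each $i$ I can choose $\bfit u_i=(u_{i1},\ldots,u_{im})\in\mathbb Z^m$ with
\[
\nu(\phi([x_i]))=\psi([y_1])^{u_{i1}}\odot\cdots\odot\psi([y_m])^{u_{im}}=\psi([\bfit y^{\bfit u_i}]).
\]

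Next I would apply Lemma \ref{universal1} to the tuple $(\bfit y^{\bfit u_1},\ldots,\bfit y^{\bfit u_n})$ to obtain the unique semiring homomorphism $\nu_0:\Bxpmfcn\to\Bypmfcn$ with $\nu_0([x_i])=[\bfit y^{\bfit u_i}]$ for every $i$. By construction, the two homomorphisms $\psi\circ\nu_0$ and $\nu\circ\phi$ from $\Bxpmfcn$ to $\Im(\psi)$ agree on each generator $[x_i]$; being semiring homomorphisms that coincide on a generating set of $\Bxpmfcn$, they coincide everywhere. This gives the required commutative square.

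The only nontrivial step is the lifting in the second paragraph: knowing that $\nu(\phi([x_i]))$ lies in $\Im(\psi)$ is not enough, because one needs to lift it to a unit of $\Bypmfcn$, i.e.\ to a single monomial class $[\bfit y^{\bfit u_i}]$, so that $\nu_0$ can be defined via Lemma \ref{universal1}. This is precisely where Lemma \ref{unitgroup} does all the work by identifying the unit group of $\Im(\psi)$ with the group generated by the images $\psi([y_j])$; once this is in hand the rest of the argument is purely formal.
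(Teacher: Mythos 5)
Your proposal is correct and follows essentially the same route as the paper: identify the unit group of $\Im(\psi)$ with the subgroup of $\mathbb Z^B_0$ generated by the $\psi([y_j])$ (via Lemma \ref{unitgroup} together with Lemma \ref{zeropart}), write each $\nu(\phi([x_i]))$ as a monomial in these generators, define $\nu_0$ on the $[x_i]$ by Lemma \ref{universal1}, and conclude by checking commutativity on generators. No gaps.
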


\begin{proof}
Note that the unit groups of $\Im(\phi)$ and $\Im(\psi)$ are $\Im(\phi) \cap \mathbb Z^A_0$ and $\Im(\psi) \cap \mathbb Z^B_0$ respectively by Lemma \ref{unitgroup}.
Thus $\nu$ induces the group homomorphism $\Im(\phi) \cap \mathbb Z^A_0 \to \Im(\psi) \cap \mathbb Z^B_0$.

Let $F_i = \phi([x_i])$ and $G_i = \psi([y_i])$.
Then $F_i \in \Im(\phi) \cap \mathbb Z^A_0$ and $G_i \in \Im(\psi) \cap \mathbb Z^B_0$ by Lemma \ref{xtodeg0}.
Let $S_0$ be the subgroup of $\mathbb Z^B_0$ generated by $\{ G_1, \ldots, G_m \}$.
By Lemma \ref{zeropart}, $S_0 = \Im(\psi) \cap \mathbb Z^B_0$.
Thus any element of $\Im(\psi) \cap \mathbb Z^B_0$ is of the form $G_1^{t_1} \odot \cdots \odot G_m^{t_m}$ for some $t_1, \ldots, t_m \in \mathbb Z$.
In particular,
$$\nu(F_i) = G_1^{t_{i1}} \odot \cdots \odot G_m^{t_{im}}$$
for some $t_{ij} \in \mathbb Z$.

By Lemma \ref{universal1}, there exists a unique homomorphism $\nu_0 : \Bxpmfcn \to \Bypmfcn$ satisfying $\nu_0([x_i]) = [y_1^{t_{i1}} \odot \cdots \odot y_m^{t_{im}}]$.
We show that this $\nu_0$ makes the diagram commutative.
It is enough to show that $(\psi \circ \nu_0)([x_i]) = (\nu \circ \phi)([x_i])$ for any $i$.
This is true because
$$(\psi \circ \nu_0)([x_i]) = \psi([y_1^{t_{i1}} \odot \cdots \odot y_m^{t_{im}}]) = G_1^{t_{i1}} \odot \cdots \odot G_m^{t_{im}},$$
while
\[ (\nu \circ \phi)([x_i]) = \nu(F_i) = G_1^{t_{i1}} \odot \cdots \odot G_m^{t_{im}}. \qedhere \]
\end{proof}

Thus we define the morphisms of realizable homomorphisms as follows.

\begin{dfn}
Let $\phi : \Bxpmfcn \to \ZApos$ and $\psi : \Bypmfcn \to \ZBpos$ be realizable homomorphisms.
A \textit{morphism} $\nu : \phi \to \psi$ of realizable homomorphisms is a semiring homomorphism $\nu : \Im(\phi) \to \Im(\psi)$.
\end{dfn}

\subsection{Pullbacks of functions}

Let $\mu : \mathbb R^n \to \mathbb R^m$ be a linear map defined by an integer matrix $T$.
For a function $f \in \Bypmfcn$ on $\mathbb R^m$, the composition $f \circ \mu$ is a function on $\mathbb R^n$.
We see that $f \circ \mu \in \Bxpmfcn$, i.e. $f \circ \mu$ is the function defined by a Boolean Laurent polynomial.

Let $\bfit t_j$ be the $j$-th row of $T$.
Then
$$\mu (\bfit p) = T \bfit p = {}^t\!\matac {\bfit t_1 \bfit p}{\cdots}{\bfit t_m \bfit p}$$
for any $\bfit p \in \mathbb R^n$.
Thus, if $f$ is the function defined by the Boolean Laurent polynomial $P \in \Bypm$,
$$(f \circ \mu) (\bfit p) = P(\bfit t_1 \bfit p, \ldots, \bfit t_m \bfit p).$$
Hence $f \circ \mu$ is the function defined by a Boolean Laurent polynomial $P(\bfit x^{\bfit t_1}, \ldots, \bfit x^{\bfit t_m})$.

Thus the map $\mu^* : \Bypmfcn \to \Bxpmfcn, f \mapsto f \circ \mu$ is defined, which is called the \textit{pullback along} $\mu$.
The map $\mu^*$ is clearly a semiring homomorphism.

Next, let $X \subset \mathbb R^n$ and $Y \subset \mathbb R^m$ be 1-dimensional tropical fans and $\mu:X \to Y$ be a morphism.
Thus $\mu$ is induced by a linear map $\mu_0 : \mathbb R^n \to \mathbb R^m$.
For a function $f \in \Bypmfcn / \CY$ on $|Y|$, the composition $f \circ \mu$ is a function on $|X|$, which is defined by a Boolean Laurent polynomial by the above argument.
Hence the map
$$\mu^* : \Bypmfcn / \CY \to \Bxpmfcn / \CX, \quad f \mapsto f \circ \mu$$
is defined, which is also called the \textit{pullback along} $\mu$.
This $\mu^*$ is also a semiring homomorphism.
We have the commutative diagram
$$\xymatrix{
\Bypmfcn \ar[r] \ar[d]_{\mu_0^*} & \Bypmfcn / \CY \ar[d]_{\mu^*} \\
\Bxpmfcn \ar[r] & \Bxpmfcn / \CX .
}$$

Since $\Bypmfcn / \CY$ and $\Bxpmfcn / \CX$ are isomorphic to $\Im(\phi_Y)$ and $\Im(\phi_X)$ respectively, $\mu^*$ induces a homomorphism $\Im(\phi_Y) \to \Im(\phi_X)$.
We denote it by $\mu^*$ by abuse of notation.

\begin{prop}
\label{contra}
The correspondence $X \mapsto \phi_X$ and $\mu \mapsto \mu^*$ gives a contravariant functor from the category of 1-dimensional tropical fans to the category of realizable homomorphisms.
\end{prop}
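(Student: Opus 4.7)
The plan is to verify the two functoriality axioms directly from the definition of pullback, since the fact that $\mu^{*}$ is a semiring homomorphism $\Im(\phi_{Y}) \to \Im(\phi_{X})$ has already been established in the discussion preceding the statement. First I would note that the composition of two morphisms of $1$-dimensional tropical fans is again a morphism: if $\mu : X \to Y$ is induced by the integer linear map $\mu_{0} : \mathbb R^{n} \to \mathbb R^{m}$ and $\nu : Y \to Z$ is induced by the integer linear map $\nu_{0} : \mathbb R^{m} \to \mathbb R^{\ell}$, then $\nu_{0} \circ \mu_{0}$ is an integer linear map whose restriction to $|X|$ lands in $|Z|$ and agrees with $\nu \circ \mu$. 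So the category on the source side is well-defined.

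Next I would check identity preservation. For $\id_{X} : X \to X$, the underlying linear map can be taken to be $\id_{\mathbb R^{n}}$, so for any $f \in \mathbb B[\bfit y^{\pm}]_{\mathrm{fcn}} / \mathcal C(X)$ we have $(\id_{X})^{*}(f) = f \circ \id_{X} = f$. Passing through the isomorphism $\Bxpmfcn / \CX \to \Im(\phi_{X})$, this says $(\id_{X})^{*} = \id_{\Im(\phi_{X})}$, as required.

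For composition, let $\mu : X \to Y$ and $\nu : Y \to Z$ be morphisms. For any $f$ in the relevant function semiring attached to $Z$,
\[
(\nu \circ \mu)^{*}(f) = f \circ (\nu \circ \mu) = (f \circ \nu) \circ \mu = \mu^{*}(\nu^{*}(f)) = (\mu^{*} \circ \nu^{*})(f),
\]
which is just associativity of map composition. The same identity then transports along the isomorphisms $\Bxpmfcn / \mathcal C(W) \cong \Im(\phi_{W})$ for $W \in \{X,Y,Z\}$, yielding $(\nu \circ \mu)^{*} = \mu^{*} \circ \nu^{*}$ as semiring homomorphisms $\Im(\phi_{Z}) \to \Im(\phi_{X})$.

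I expect no serious obstacle, since every step is essentially formal once the preceding two subsections are in place. The one thing that warrants care, but was already handled above, is that the map $\mu^{*}$ between the quotient semirings is well-defined and indeed factors through the isomorphisms with $\Im(\phi_{Y})$ and $\Im(\phi_{X})$; once that is in hand, the functoriality axioms reduce to the strictly formal identities above.
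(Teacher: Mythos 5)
Your proof is correct and follows essentially the same route as the paper: identity preservation is immediate, and $(\nu\circ\mu)^{*}=\mu^{*}\circ\nu^{*}$ reduces to associativity of function composition, transported through the isomorphisms $\Bxpmfcn/\CX\cong\Im(\phi_X)$. The extra remarks on closure of the source category under composition and on well-definedness of $\mu^{*}$ are fine but already covered by the preceding subsection.
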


\begin{proof}
It is clear that the pullbacks along identity morphisms are identity morphisms.
For two morphisms $\mu_1 : X \to Y$ and $\mu_2 : Y \to Z$ of 1-dimensional tropical fans and a function $f$ on $Z$, we have
$$(\mu_2 \circ \mu_1)^*(f) = f \circ (\mu_2 \circ \mu_1) = (f \circ \mu_2) \circ \mu_1 = \mu_1^*(\mu_2^*(f)),$$
which means that $(\mu_2 \circ \mu_1)^* = \mu_1^* \circ \mu_2^*$.
\end{proof}

We show that this functor is faithful.
We use the following notation:
Let $A, B$ be finite sets and $\nu : \ZApos \to \ZBpos$ be a semiring homomorphism.
Let $F_1, \ldots, F_n \in \mathbb Z^A_0$ be any elements of degree $0$ and $\bfit F := {}^t\!\matac{F_1}{\cdots}{F_n}$.
Then $\nu(\bfit F)$ means the column vector ${}^t\!\matac{\nu(F_1)}{\cdots}{\nu(F_n)}$.

We use the following lemma.

\begin{lem}
\label{mu and mu*}
Let $X \subset \mathbb R^n$ and $Y \subset \mathbb R^m$ be 1-dimensional tropical fans and $\mu : X \to Y$ be a morphism.
Let $\rho \in X$ be a ray, $w$ be its weight, and $\bfit d$ be its primitive direction vector.
\begin{enumerate}[$(1)$]
\item For any function $f \in \Bypmfcn$ on $\mathbb R^m$, we have
$$\mu^*(\phi_Y(f))(\rho) = w f(\mu(\bfit d)).$$
\item Let $G_i := \phi_Y([y_i])$ and $\bfit G = {}^t\! \matac {G_1} {\cdots} {G_m}$. 
Then $\mu^*(\bfit G)(\rho) = w \mu(\bfit d)$.
\end{enumerate}
\end{lem}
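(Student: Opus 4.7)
The plan is to unpack the definition of $\mu^*$ on $\Im(\phi_Y)$ via the isomorphism with $\Bypmfcn / \CY$ and then apply the definition of the weighted evaluation map. The key observation is that the diagram
$$\xymatrix{
\Bypmfcn \ar[r]^-{\phi_Y} \ar[d]_-{\mu_0^*} & \Im(\phi_Y) \ar[d]_-{\mu^*} \\
\Bxpmfcn \ar[r]^-{\phi_X} & \Im(\phi_X)
}$$
commutes. This follows because $\mu^* : \Im(\phi_Y) \to \Im(\phi_X)$ is defined through the commutative square relating quotient semirings to $\mu_0^*$ at the top, together with the isomorphisms $\Bypmfcn/\CY \cong \Im(\phi_Y)$ and $\Bxpmfcn/\CX \cong \Im(\phi_X)$ induced by $\phi_Y$ and $\phi_X$.

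For part (1), I would write $\mu^*(\phi_Y(f)) = \phi_X(\mu_0^*(f)) = \phi_X(f \circ \mu_0)$ using the commutative diagram above. Then by definition of $\phi_X$,
$$\phi_X(f \circ \mu_0)(\rho) = w \cdot (f \circ \mu_0)(\bfit d) = w \cdot f(\mu_0(\bfit d)) = w \cdot f(\mu(\bfit d)),$$
where the last equality uses that $\bfit d \in \rho \subset |X|$ and $\mu$ is the restriction of $\mu_0$ to $|X|$.

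Part (2) is an immediate corollary: applying (1) to the function $f = [y_i] \in \Bypmfcn$ gives
$$\mu^*(G_i)(\rho) = \mu^*(\phi_Y([y_i]))(\rho) = w \cdot [y_i](\mu(\bfit d)),$$
and $[y_i]$ is nothing but the $i$-th coordinate function on $\mathbb R^m$, so $[y_i](\mu(\bfit d))$ is the $i$-th entry of the column vector $\mu(\bfit d)$. Stacking these for $i = 1, \ldots, m$ yields $\mu^*(\bfit G)(\rho) = w \mu(\bfit d)$.

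There is no real obstacle; the only thing to be slightly careful about is that $\mu^*$ in the statement denotes the induced map on $\Im(\phi_Y) \to \Im(\phi_X)$ rather than $\mu_0^* : \Bypmfcn \to \Bxpmfcn$, so the proof hinges on invoking the commutative diagram that was established just before the lemma. Once that identification is made, the rest is a direct unwinding of the definitions of $\phi_X$ and $\phi_Y$.
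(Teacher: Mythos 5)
Your proof is correct and follows essentially the same route as the paper: both reduce the claim to the commutativity of the square relating $\mu_0^*$ and the induced $\mu^*$ on images, then unwind the definition of the weighted evaluation map, and both obtain (2) by applying (1) to the coordinate functions $[y_i]$. No gaps.
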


\begin{proof}
\begin{enumerate}[(1)]
\item Let $\mu_0 : \mathbb R^n \to \mathbb R^m$ be a linear map such that $\mu_0|_{|X|} = \mu$.
Note that there is a commutative diagram
$$\xymatrix{
\Bypmfcn \ar[r] \ar@/^{20pt}/[rr]^{\phi_Y} \ar[d]_{\mu_0^*} & \Bypmfcn / \CY \ar[r] \ar[d]_{\mu^*} & \ZYpos  \ar[d]_{\mu^*} \\
\Bxpmfcn \ar[r] \ar@/_{20pt}/[rr]_{\phi_X} & \Bxpmfcn / \CX \ar[r] & \ZXpos.
}$$
Thus we have
$$\mu^*(\phi_Y(f))(\rho) = \phi_X(\mu_0^*(f))(\rho) = \phi_X(f \circ \mu_0)(\rho) = w f(\mu_0 (\bfit d)) = w f(\mu (\bfit d)).$$
\item By $(1)$, we have
\begin{align*}
\mu^*(\bfit G)(\rho) &= {}^t\! \matac {\mu^*(\phi_Y([y_1]))(\rho)} {\cdots} {\mu^*(\phi_Y([y_m]))(\rho)} \\
&={}^t\! \matac {w[y_1](\mu(\bfit d))} {\cdots} {w[y_m](\mu(\bfit d))} = w \mu(\bfit d). \qedhere
\end{align*}
\end{enumerate}
\end{proof}

\begin{prop}
\label{faithful}
Let $X \subset \mathbb R^n$ and $Y \subset \mathbb R^m$ be 1-dimensional tropical fans.
Let $\mu_1, \mu_2 : X \to Y$ be morphisms.
If $\mu_1^* = \mu_2^*$, then $\mu_1 = \mu_2$.
In other words, the functor in Theorem \ref{contra} is faithful.
\end{prop}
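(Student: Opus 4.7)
The plan is to reduce the equality of the geometric maps $\mu_1, \mu_2 : |X| \to |Y|$ to the equality of their values on the primitive direction vectors of the rays of $X$, and then extract this from $\mu_1^* = \mu_2^*$ by evaluating on a specific vector of generators of $\Im(\phi_Y)$.

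Concretely, I would first set $G_i := \phi_Y([y_i]) \in \Im(\phi_Y)$ for $i = 1, \ldots, m$ and assemble the column vector $\bfit G = {}^t\!\matac{G_1}{\cdots}{G_m}$. Each $G_i$ lies in $\Im(\phi_Y)$, so the hypothesis $\mu_1^* = \mu_2^*$ applies entrywise: $\mu_1^*(\bfit G) = \mu_2^*(\bfit G)$ as vectors in $\Im(\phi_X)^m$. Now invoke Lemma \ref{mu and mu*} (2): for every ray $\rho \in X(1)$ with weight $w_\rho$ and primitive direction $\bfit d_\rho$,
$$\mu_j^*(\bfit G)(\rho) = w_\rho \mu_j(\bfit d_\rho) \qquad (j = 1,2).$$
Equating these two expressions and using $w_\rho \in \mathbb Z_{>0}$ yields $\mu_1(\bfit d_\rho) = \mu_2(\bfit d_\rho)$ for every $\rho \in X(1)$.

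To conclude, recall that each $\mu_j$ is the restriction to $|X|$ of a linear map $\mathbb R^n \to \mathbb R^m$ defined by an integer matrix. Any point of $|X|$ is either $\mathbf 0$ or of the form $t \bfit d_\rho$ for some $\rho \in X(1)$ and $t \geq 0$. Linearity gives $\mu_j(\mathbf 0) = \mathbf 0$ and $\mu_j(t \bfit d_\rho) = t \mu_j(\bfit d_\rho)$, so the equality on primitive directions propagates to all of $|X|$, proving $\mu_1 = \mu_2$.

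There is no real obstacle here beyond correctly invoking Lemma \ref{mu and mu*} (2); the weight factor $w_\rho$ that distinguishes $\phi_X$ from a naive evaluation map is precisely what allows us to recover the vector $\mu_j(\bfit d_\rho)$ up to the positive scalar $w_\rho$, and cancellation of this scalar is the only substantive step. The subsequent extension from rays to $|X|$ is immediate from the linear nature of morphisms of $1$-dimensional tropical fans.
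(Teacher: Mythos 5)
Your proof is correct and follows essentially the same route as the paper: both evaluate $\mu_j^*(\bfit G)$ at a ray $\rho$ via Lemma \ref{mu and mu*}\,(2) to get $w_\rho\,\mu_j(\bfit d_\rho)$ and cancel the positive weight. The only difference is that you spell out the final reduction from primitive direction vectors to all of $|X|$ by linearity, which the paper leaves implicit.
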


\begin{proof}
Let $\rho$ be a ray in $X$, $w$ be its weight, and $\bfit d$ be its primitive direction vector.
It is enough to show that $\mu_1(\bfit d) = \mu_2(\bfit d)$. 
Let $G_i := \phi_Y([y_i])$ for $i = 1, \ldots, m,$ and $\bfit G := {}^t\! \matac {G_1} {\cdots} {G_m}$.
Then, by Lemma \ref{mu and mu*},
$$w \mu_1(\bfit d) = \mu_1^*(\bfit G)(\rho) = \mu_2^*(\bfit G)(\rho) = w \mu_2(\bfit d).$$
Since $w$ is a positive integer, we have $\mu_1(\bfit d) = \mu_2(\bfit d)$.
\end{proof}

\subsection{Geometricity}

Pullback homomorphisms have the following property.

\begin{prop}
\label{pbgeometric}
Let $X \subset \mathbb R^n$ and $Y \subset \mathbb R^m$ be 1-dimensional tropical fans and $\mu : X \to Y$ be a morphism.
Let $G_i = \phi_Y([y_i])$ for $i=1, \ldots, m$, and $\bfit G := {}^t\! \matac {G_1} {\cdots} {G_m}$.
Then, for any $\rho \in X(1)$, there exists a ray $\rho' \in Y(1)$ and a nonnegative rational number $t$ such that $\mu^*(\bfit G)(\rho) = t\bfit G(\rho')$.
\end{prop}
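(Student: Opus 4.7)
The plan is to reduce everything to a direct calculation with the primitive direction vectors and weights, using Lemma \ref{mu and mu*}(2) and Proposition \ref{phitostar}.

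First, I would fix $\rho \in X(1)$ with primitive direction vector $\bfit d$ and weight $w$. By Lemma \ref{mu and mu*}(2), we have $\mu^*(\bfit G)(\rho) = w\,\mu(\bfit d)$. Since $\mu:X\to Y$ is a morphism of tropical fans, it sends $|X|$ into $|Y|$, so $\mu(\bfit d) \in |Y|$. Now I would split into two cases depending on whether $\mu(\bfit d) = \mathbf 0$.

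If $\mu(\bfit d) = \mathbf 0$, then $\mu^*(\bfit G)(\rho) = \mathbf 0$, and choosing $t = 0$ together with any ray $\rho' \in Y(1)$ (note $Y(1) \neq \emptyset$ since $Y$ is a 1-dimensional fan) gives the equality $\mu^*(\bfit G)(\rho) = t\,\bfit G(\rho') = \mathbf 0$.

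If $\mu(\bfit d) \neq \mathbf 0$, then $\mu(\bfit d) \in |Y| \smallsetminus \{\mathbf 0\}$, so there is a unique $\rho' \in Y(1)$ containing $\mu(\bfit d)$. Let $\bfit d'$ be its primitive direction vector and $w'$ its weight. Since $\mu$ is the restriction of a linear map defined by an integer matrix and $\bfit d$ is an integer vector, $\mu(\bfit d)$ is an integer vector on $\rho'$, so $\mu(\bfit d) = s\bfit d'$ for some positive integer $s$ (primitivity of $\bfit d'$). By Proposition \ref{phitostar} applied to $\phi_Y$, we have $\bfit G(\rho') = w'\bfit d'$. Therefore
\[
\mu^*(\bfit G)(\rho) \;=\; w\,\mu(\bfit d) \;=\; ws\,\bfit d' \;=\; \frac{ws}{w'}\cdot w'\bfit d' \;=\; \frac{ws}{w'}\,\bfit G(\rho'),
\]
and $t := ws/w'$ is a nonnegative rational number, as required.

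No step looks like a real obstacle; the only thing to be careful about is the degenerate case $\mu(\bfit d) = \mathbf 0$, where one must observe that the statement still holds vacuously with $t=0$ and any chosen ray $\rho'$, rather than trying to produce a ray $\rho'$ from $\mu(\bfit d)$ itself.
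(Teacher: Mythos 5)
Your proof is correct and follows essentially the same route as the paper's: both reduce to $\mu^*(\bfit G)(\rho) = w\,\mu(\bfit d) \in |Y|$ via Lemma \ref{mu and mu*} and then compare with $\bfit G(\rho') = w'\bfit d'$ from Proposition \ref{phitostar}. The paper simply absorbs your degenerate case by noting that $\mathbf 0$ lies on every ray, and gets rationality of $t$ from the fact that both vectors are integral rather than from the explicit formula $t = ws/w'$.
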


\begin{proof}
Fix a ray $\rho \in X(1)$.
Let $w$ be the weight of $\rho$ and $\bfit d$ be the primitive direction vector of $\rho$.
By Lemma \ref{mu and mu*}, we have $\mu^*(\bfit G)(\rho) = w\mu(\bfit d) \in |Y|$.
Thus $\mu^*(\bfit G)(\rho) \in \rho'$ for some $\rho' \in Y(1)$.
Since $\bfit G(\rho') \in \rho' \smallsetminus \{ \mathbf 0 \}$ by Lemma \ref{phitostar}, we have $\mu^*(\bfit G)(\rho) = t\bfit G(\rho')$ for some $t \geq 0$.
That $t$ is rational because both $\mu^*(\bfit G)(\rho)$ and $\bfit G(\rho')$ are integer vectors.
\end{proof}

We introduce the following terminologies.

\begin{dfn}
\label{def:geometric}
Let $A, B$ be finite sets. 
\begin{enumerate}[(1)]
\item Let $R \subset \ZApos$ and $S \subset \ZBpos$ be subsemirings and $F_1, \ldots, F_n \in R \cap \mathbb Z^r_0$ be any elements of degree $0$.
Let $\bfit F := (F_1, \ldots, F_n)$.
A homomorphism $\nu : R \to S$ is \textit{geometric with respect to} $F_1, \ldots, F_n$ if, for any $b \in B$, there exist $a \in A$ and a nonnegative rational number $t$ such that $\nu(\bfit F)(b) = t\bfit F(a)$.
\item For two realizable homomorphisms $\phi_1 : \Bxpmfcn \to \ZApos$ and $\phi_2 : \Bypmfcn \to \ZBpos$, a morphism $\nu : \phi_1 \to \phi_2$ is \textit{geometric} if $\nu$ is geometric with respect to $\phi_1([x_1]), \ldots, \phi([x_n])$.
\end{enumerate}
\end{dfn}

Proposition \ref{pbgeometric} means that, for any morphism $\mu : X \to Y$ of 1-dimensional tropical fans, $\mu^* : \phi_Y \to \phi_X$ is geometric.

Conversely, we can construct a morphism of 1-dimensional tropical fans from a geometric morphism.
This is the reason why we use the term ``geometric''.

\begin{prop}
\label{full}
Let $X \subset \mathbb R^n$ and $Y \subset \mathbb R^m$ be 1-dimensional tropical fans.
Assume that there is a geometric morphism $\nu:\phi_Y \to \phi_X$.
Then there exists a unique morphism $\nu^* : X \to Y$ such that $(\nu^*)^* = \nu$.
\end{prop}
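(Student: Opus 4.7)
The plan is to build an integer linear map $\mu_0 : \mathbb R^n \to \mathbb R^m$ directly from $\nu$ using the image of the distinguished generators, and then use the geometric hypothesis to show that $\mu_0$ restricts to a map $|X| \to |Y|$. Let $F_i := \phi_X([x_i])$ and $G_j := \phi_Y([y_j])$, with column vectors $\bfit F = {}^t(F_1, \ldots, F_n)$ and $\bfit G = {}^t(G_1, \ldots, G_m)$.

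The first step is purely algebraic. Each $G_j$ is a unit in $\Im(\phi_Y)$, so $\nu(G_j)$ is a unit in $\Im(\phi_X)$, and by Lemma \ref{unitgroup} it lies in $\Im(\phi_X) \cap \mathbb Z^{X(1)}_0$. By Lemma \ref{zeropart}, this subgroup is generated by $F_1, \ldots, F_n$, so we may write $\nu(G_j) = \sum_{k=1}^n t_{jk} F_k$ (multiplicatively, $\nu(G_j) = F_1^{t_{j1}} \odot \cdots \odot F_n^{t_{jn}}$) for some integers $t_{jk}$. Let $T := (t_{jk})$ be the resulting $m \times n$ integer matrix, and let $\mu_0 : \mathbb R^n \to \mathbb R^m$ be the linear map with matrix $T$. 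By construction $\nu(\bfit G)(\rho) = T\bfit F(\rho) = w_\rho\, T\bfit d_\rho = w_\rho\, \mu_0(\bfit d_\rho)$ for every ray $\rho \in X(1)$, using Proposition \ref{phitostar}.

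The main point (and the step where the geometric hypothesis is essential) is to verify $\mu_0(|X|) \subset |Y|$. Fix $\rho \in X(1)$ with weight $w_\rho$ and primitive direction $\bfit d_\rho$. By geometricity of $\nu$, there exist $\rho' \in Y(1)$ and a nonnegative rational $t$ with $\nu(\bfit G)(\rho) = t\,\bfit G(\rho') = t w_{\rho'} \bfit d_{\rho'}$. Combining with the previous identity, $\mu_0(\bfit d_\rho) = (tw_{\rho'}/w_\rho)\,\bfit d_{\rho'}$, which lies on the ray $\rho'$ (or equals $\mathbf 0$ when $t=0$). Because $\mu_0$ is linear, $\mu_0(s\bfit d_\rho) = s\mu_0(\bfit d_\rho)$ also lies in $\rho' \cup \{\mathbf 0\} \subset |Y|$ for all $s \geq 0$. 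Since $|X|$ is the union of all such rays, $\mu_0(|X|) \subset |Y|$, so $\mu := \mu_0|_{|X|}$ is a morphism of $1$-dimensional tropical fans by definition.

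Finally, $(\mu)^* = \nu$ follows from Lemma \ref{mu and mu*}(2), which gives $\mu^*(\bfit G)(\rho) = w_\rho \mu(\bfit d_\rho) = w_\rho \mu_0(\bfit d_\rho) = \nu(\bfit G)(\rho)$ for every $\rho \in X(1)$, so $\mu^*(G_j) = \nu(G_j)$ for all $j$; taking inverses and using that $\{G_1, G_1^{-1}, \ldots, G_m, G_m^{-1}\}$ generates $\Im(\phi_Y)$ as a semiring, we conclude $\mu^* = \nu$. Uniqueness of $\mu$ is immediate from the faithfulness of the pullback functor (Proposition \ref{faithful}). I expect the main obstacle to be bookkeeping around the non-uniqueness of the matrix $T$: the integers $t_{jk}$ need not be uniquely determined if the $F_k$ satisfy nontrivial $\mathbb Z$-linear relations, but any two choices of $T$ coincide on every $\bfit d_\rho$ (hence on $|X|$) since $T\bfit d_\rho = \nu(\bfit G)(\rho)/w_\rho$, so the resulting $\mu$ is well defined regardless.
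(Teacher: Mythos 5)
Your proof is correct and follows essentially the same route as the paper's: extract the integer matrix $T$ from $\nu(G_j)$ written in terms of the generators $F_k$ of the unit group (via Lemmas \ref{unitgroup} and \ref{zeropart}), use geometricity to see $T\bfit d_\rho$ lands on a ray of $Y$, verify $(\nu^*)^* = \nu$ on the generators via Lemma \ref{mu and mu*}, and get uniqueness from Proposition \ref{faithful}. Your closing remark about the non-uniqueness of $T$ is a nice touch the paper omits, though it is ultimately subsumed by the uniqueness statement itself.
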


\begin{proof}
The uniqueness follows from Proposition \ref{faithful}.
We show the existence.

Let $R_X := \Im(\phi_X)$ and $R_Y := \Im(\phi_Y)$.
Note that the unit groups of $R_X$ and $R_Y$ are $R_X \cap \mathbb Z^{X(1)}_0$ and $R_Y \cap \mathbb Z^{Y(1)}_0$ respectively by Lemma \ref{unitgroup}.
Thus $\nu$ induces the group homomorphism $R_Y \cap \mathbb Z^{Y(1)}_0 \to R_X \cap \mathbb Z^{X(1)}_0$.

Let $F_i := \phi_X([x_i])$ and $\bfit F := {}^t\! \matac {F_1} {\cdots} {F_n}$.
Similarly, let $G_i := \phi_Y([y_i])$ and $\bfit G:= {}^t\! \matac {G_1} {\cdots} {G_m}$.
Since $R_X \cap \mathbb Z^{X(1)}_0$ is generated by $\{ F_1, \ldots, F_n \}$, we can write
$$\nu(G_i) = t_{i1} F_1 + \cdots + t_{in} F_n$$
for some $t_{ij} \in \mathbb Z$, for $i=1, \ldots, m$.
This means that there exists an $m \times n$ integer matrix $T$ such that $\nu(\bfit G) = T \bfit F$.

Let $\bfit d$ be the primitive direction vector of a ray $\rho \in X(1)$ and $w$ be the weight of $\rho$.
Then $w \bfit d = \bfit F(\rho)$ by Proposition \ref{phitostar}.
Since $\nu$ is geometric with respect to $G_1, \ldots, G_m$, there exists a ray $\rho' \in Y(1)$ and a nonnegative rational number $t$ such that $\nu(\bfit G)(\rho) = t \bfit G(\rho')$.
Thus we have
$$t \bfit G(\rho') = \nu(\bfit G)(\rho) = T\bfit F (\rho) = w T \bfit d.$$
Since $\bfit G (\rho') \in \rho'$, we have $T \bfit d \in \rho' \subset |Y|$.
Thus $T$ defines a map from $|X|$ to $|Y|$.
In other words, $T$ defines a morphism from $X$ to $Y$.
We denote this morphism by $\nu^*$.

We show $(\nu^*)^* = \nu$.
It is enough to show that $(\nu^*)^*(G_i) = \nu(G_i)$ for any $i$ since $R_Y$ is generated by $\{ G_1, G^{-1}, \ldots, G_m, G_m^{-1} \}$.
Thus, it is enough to show that $(\nu^*)^*(\bfit G) = \bfit G$.
Let $\rho$ be a ray in $X$, $w$ be its weight, and $\bfit d$ be its primitive direction vector.
Then, by Lemma \ref{mu and mu*},
$$(\nu^*)^*(\bfit G)(\rho) = w \nu^*(\bfit d) = w T \bfit d.$$
On the other hand, by lemma \ref{phitostar}, $\nu(\bfit G)(\rho) = T \bfit F(\rho) =  w T \bfit d$.
Hence $(\nu^*)^* = \nu$.
\end{proof}

Hence we have the following result.

\begin{thm}
The correspondence $X \mapsto \phi_X$ and $\mu \mapsto \mu^*$ gives a fully faithful contravariant functor from the category of 1-dimensional tropical fans to the category of realizable homomorphisms whose morphisms are geometric morphisms.
\end{thm}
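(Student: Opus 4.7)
The plan is to assemble the theorem from the four propositions already established in this section. Proposition \ref{contra} gives a contravariant functor $X \mapsto \phi_X$, $\mu \mapsto \mu^*$ into the category of realizable homomorphisms. Proposition \ref{pbgeometric} says that every morphism in the image of this functor is geometric, so (after checking that geometric morphisms do form a subcategory, see next paragraph) we may restrict the codomain to the subcategory with geometric morphisms. Faithfulness is then exactly Proposition \ref{faithful}, and fullness is exactly Proposition \ref{full}.

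First I would verify that geometric morphisms form a subcategory of the category of realizable homomorphisms, i.e., that identities are geometric and composition preserves geometricity. For identities this is immediate from Definition \ref{def:geometric}, taking $t=1$ and $a=b$. For composition, the slickest route is to use the propositions already proved: given geometric morphisms $\nu_1 : \phi_Y \to \phi_X$ and $\nu_2 : \phi_Z \to \phi_Y$, Proposition \ref{full} produces morphisms $\mu_1 : X \to Y$ and $\mu_2 : Y \to Z$ with $\mu_1^* = \nu_1$ and $\mu_2^* = \nu_2$. Then $\nu_1 \circ \nu_2 = \mu_1^* \circ \mu_2^* = (\mu_2 \circ \mu_1)^*$ by the functoriality established in Proposition \ref{contra}, and this is geometric by Proposition \ref{pbgeometric}.

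With the subcategory in place, the contravariant functor of Proposition \ref{contra} factors through it. Faithfulness is Proposition \ref{faithful}. For fullness, given any geometric morphism $\nu : \phi_Y \to \phi_X$, Proposition \ref{full} supplies a morphism $\nu^* : X \to Y$ of 1-dimensional tropical fans with $(\nu^*)^* = \nu$, so every geometric morphism in the target category is of the form $\mu^*$ for some $\mu$ in the source category. Combined, these give the fully faithful contravariant functor claimed.

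The only step requiring care is the subcategory verification, and even that reduces to a single application of the functoriality identity $(\mu_2 \circ \mu_1)^* = \mu_1^* \circ \mu_2^*$ together with Propositions \ref{full} and \ref{pbgeometric}; in particular no new computation is needed beyond what is already in this section.
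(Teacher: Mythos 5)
Your proposal is correct and takes essentially the same route as the paper, which simply deduces the theorem from Propositions \ref{contra}, \ref{pbgeometric}, \ref{faithful} and \ref{full} without further argument. Your extra check that geometric morphisms form a subcategory (closure under composition via \ref{full}, functoriality, and \ref{pbgeometric}) is a detail the paper leaves implicit, and your argument for it is sound.
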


\begin{rem}
\label{map of rays}
Let $X, Y, \nu, \bfit F$ and $\bfit G$ be as in Proposition \ref{full} and its proof.
Assume that $\nu(\bfit G)(\rho) \neq \mathbf 0$ for any $\rho \in X(1)$.
Since $\nu$ is geometric, for any $\rho \in X(1)$, $\nu (\bfit G)(\rho) = t \bfit G(\rho')$ for some ray $\rho' \in Y(1)$ and a rational number $t>0$.
Such $\rho'$ is unique since $\bfit G(\rho') \in \rho'$ by lemma \ref{phitostar}.
Thus the correspondence $\rho \mapsto \rho'$ defines a map $X(1) \to Y(1)$.
We denote this map by $\sigma$.

We see that $\sigma$ is compatible with $\nu^*$, i.e., for any ray $\rho \in X(1)$ and nonzero vector $\bfit p \in \rho$, we have $\nu^* (\bfit p) \in \sigma(\rho)$.
Since $\bfit F(\rho)$ is nonzero vector in $\rho$, it is enough to show that $\nu^*(\bfit F(\rho)) \in \sigma(\rho)$.
Let $T$ be as in the proof of Proposition \ref{full}.
Then, by the proof of Proposition \ref{full},
$$\nu^*(\bfit F(\rho)) = T \bfit F(\rho) \in \rho' = \sigma(\rho).$$

In particular, if $\sigma$ is injective (resp. surjective, resp. bijective), then $\nu^*$ is also injective (resp. surjective, resp. bijective).
\end{rem}

\subsection{Geometric homomorphisms}

The author has the following conjecture, which means that the term ``geometric'' is not necessary.

\begin{conj}
\label{conj}
Let $A, B$ be finite sets.
Let $R \subset \ZApos$ and $S \subset \ZBpos$ be subsemirings.
Then any homomorphism $\nu : R \to S$ is geometric with respect to any set of elements of degree $0$.
\end{conj}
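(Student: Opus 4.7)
The plan is to fix $b \in B$ and show that $\bfit v := \nu(\bfit F)(b)$ is a nonnegative rational scalar of $\bfit F(a)$ for some $a \in A$. Since each $F_i$ has degree $0$ and (in the realizable setting implicitly assumed by the paper) such an element is a unit in $R$ by Lemma~\ref{unitgroup}, $\nu(F_i) \in S$ is also a unit and $\bfit v \in \mathbb Z^n$ has finite integer coordinates. First apply Lemma~\ref{universal} to produce the unique homomorphism $\phi : \Bxpmfcn \to R$ with $\phi([x_i]) = F_i$; its kernel is exactly the congruence identifying $[P]$ with $[Q]$ whenever $P(\bfit F(a)) = Q(\bfit F(a))$ for every $a \in A$. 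Composing with $\nu$ gives $(\nu\circ\phi)([P])(b) = P(\bfit v)$, so the inclusion $\Ker(\phi) \subset \Ker(\nu\circ\phi)$ yields the key implication
\[ (*)\qquad P(\bfit F(a)) = Q(\bfit F(a)) \text{ for all } a \in A\ \Longrightarrow\ P(\bfit v) = Q(\bfit v). \]

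The core step is to show that $(*)$ alone forces $\bfit v = t\bfit F(a)$ for some $a \in A$ and $t \geq 0$. I would argue the contrapositive: if $\bfit v$ is not a nonnegative scalar of any $\bfit F(a)$ (so in particular $\bfit v \neq \mathbf 0$), then for each $a \in A$ a separating-hyperplane argument in $\mathbb R^n$ produces $\bfit c_a \in \mathbb Z^n$ satisfying $\bfit c_a \cdot \bfit F(a) \geq 0$ and $\bfit c_a \cdot \bfit v < 0$. Picking any $\bfit u \in \mathbb Z^n$ and setting $\bfit u_a := \bfit u + \bfit c_a$, the Boolean Laurent polynomials
\[ P := \bigoplus_{a \in A} \bfit x^{\bfit u_a}, \qquad Q := P \oplus \bfit x^{\bfit u} \]
then violate $(*)$: for each $a_0 \in A$ the inequality $\bfit c_{a_0} \cdot \bfit F(a_0) \geq 0$ gives $\bfit u \cdot \bfit F(a_0) \leq \bfit u_{a_0} \cdot \bfit F(a_0) \leq P(\bfit F(a_0))$, hence $Q(\bfit F(a_0)) = P(\bfit F(a_0))$; whereas each $\bfit u_a \cdot \bfit v < \bfit u \cdot \bfit v$ forces $P(\bfit v) < \bfit u \cdot \bfit v = Q(\bfit v)$. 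This contradicts $(*)$, so $\bfit v = t\bfit F(a)$; rationality of $t$ is automatic since both vectors are integer.

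The main obstacle is producing the integer vectors $\bfit c_a$ uniformly across the degenerate cases: where $\bfit F(a) = \mathbf 0$ (any $\bfit c_a \in \mathbb Z^n$ with $\bfit c_a \cdot \bfit v < 0$ will do, which exists since $\bfit v \neq \mathbf 0$); where $\bfit v$ is antiparallel to $\bfit F(a)$ (take $\bfit c_a = \bfit F(a)$, which is already integer); and where $\bfit v$ and $\bfit F(a)$ are linearly independent (use Cauchy--Schwarz to construct a rational separator, then clear denominators). A secondary subtlety, which suggests the conjecture really needs a realizability hypothesis on $R$ and $S$, is that dropping it allows homomorphisms with $\nu(F_i)(b) = -\infty$ (for example when some $F_i$ is not a unit in $R$), in which case $\bfit v$ simply cannot equal any $t\bfit F(a)$; under the realizability hypothesis the argument above goes through and upgrades the main theorem of Section~6 to a fully faithful contravariant equivalence without the ``geometric'' qualifier.
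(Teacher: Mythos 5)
First, be aware that the paper does \emph{not} prove this statement: it is stated as an open conjecture, and the paper only verifies special cases (the codomain-unrestricted case $R=\ZApos$ with $|A|\ge 2$, and the $|A|=3$, rank-$2$ case), in both instances by exhibiting explicit tropical identities among the generators and invoking Lemma \ref{lem:trop eq}. Your argument is therefore an attempt to settle the conjecture outright, and its central idea is genuinely new relative to the paper and essentially sound: once one has the implication $(*)$, the pair $P=\bigoplus_{a}\bfit x^{\bfit u+\bfit c_a}$, $Q=P\oplus\bfit x^{\bfit u}$ built from integer vectors $\bfit c_a$ separating $\bfit v$ from each ray $\mathbb R_{\ge 0}\bfit F(a)$ does agree with $P$ at every $\bfit F(a)$ while $P(\bfit v)<Q(\bfit v)$, and such $\bfit c_a$ exist exactly when $\bfit v\notin\bigcup_a\mathbb R_{\ge 0}\bfit F(a)$ (your three cases, or one application of the separating hyperplane theorem for a point and a closed rational cone).

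There are, however, two real problems. First, a repairable one: the composite $\nu\circ\phi$ with the $\phi$ of Lemma \ref{universal} is not defined in general, because $\phi$ lands in the subsemiring generated by $F_1^{\pm1},\ldots,F_n^{\pm1}$, which need not be contained in $R$ when the $F_i$ are not units of $R$. The fix is to bypass $\Bxpmfcn$ entirely: choose $\bfit u$ with coordinates large enough that $\bfit u$ and every $\bfit u+\bfit c_a$ are nonnegative; then $P(F_1,\ldots,F_n)$ and $Q(F_1,\ldots,F_n)$ are formed from the $F_i$ using only $\oplus$ and $\odot$, hence lie in $R$, they are equal as elements of $\ZApos$ by the agreement at every $\bfit F(a)$, and applying $\nu$ and evaluating at $b$ gives $(*)$ directly. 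Second, and more seriously, the statement as literally written is \emph{false}, for precisely the reason you flag as a ``secondary subtlety'' but do not push to a conclusion: if some $F_i$ is not a unit of $R$, a homomorphism may send it to $-\infty$. Concretely, take $A=\{1,2\}$, $F=(1,-1)$, and $R$ the subsemiring generated by $F$ alone, whose nonzero elements are exactly the maps $(M,-m)$ with $M\ge m\ge 0$; for $B=\{b\}$ the assignment $(M,-m)\mapsto 0$ if $m=0$ and $(M,-m)\mapsto-\infty$ if $m\ge 1$ is a semiring homomorphism $R\to\ZBpos$ with $\nu(F)=-\infty$, which can equal no $tF(a)$. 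So no proof of the conjecture as stated can exist. What your argument does prove, once repaired as above, is the conjecture under the additional hypothesis that $\nu(F_i)\ne-\infty$ for every $i$ --- which holds automatically whenever the $F_i$ are units of $R$, hence in every situation the paper actually uses (Definition \ref{def:geometric}(2) and Proposition \ref{full}, where $R=\Im(\phi_X)$ is generated by a subgroup of $\mathbb Z^{X(1)}_0$). You should state that weaker theorem explicitly, record the counterexample, and note that it does indeed make the adjective ``geometric'' redundant in the category of realizable homomorphisms.
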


In the situation above, let $\iota : S \hookrightarrow \ZBpos$ be the inclusion map.
Then it is clear that $\nu$ is geometric with respect to a set of elements of degree $0$ if and only if $\iota \circ \nu$ is so.
Thus, in order to prove this conjecture, it is enough to consider the homomorphisms of the form $R \to \ZBpos$.

We give some simple examples.

\begin{prop}
\label{prop:inclusion}
Let $A$ be a finite set.
Let $R \subset \ZApos$ be a subsemiring.
Then the inclusion map $\iota : R \hookrightarrow \ZApos$ is geometric with respect to any set of elements of degree $0$.
\end{prop}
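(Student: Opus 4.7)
The plan is straightforward: I would verify the condition in Definition \ref{def:geometric}(1) directly, and the statement turns out to be essentially tautological. Fix any elements $F_1, \ldots, F_n \in R \cap \mathbb Z^A_0$ and set $\bfit F := {}^t\!\matac{F_1}{\cdots}{F_n}$. Since the codomain of $\iota$ is $\ZApos$, the finite set ``$B$'' in the definition is $A$ itself, so what I must show is that for every $b \in A$ there exist $a \in A$ and a nonnegative rational number $t$ satisfying $\iota(\bfit F)(b) = t \bfit F(a)$.

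Now $\iota$ is the inclusion, so $\iota(F_i) = F_i$ for every $i$, and hence $\iota(\bfit F) = \bfit F$ as a vector of functions on $A$. Taking $a = b$ and $t = 1$ therefore yields $\iota(\bfit F)(b) = \bfit F(b) = 1 \cdot \bfit F(b)$, as required. Because $F_1, \ldots, F_n$ were chosen arbitrarily among degree-$0$ elements of $R$, the inclusion $\iota$ is geometric with respect to any such set. I anticipate no real obstacle here: the entire content of the proof lies in parsing the definition correctly and noticing that the witnessing pair $(a,t) = (b,1)$ is forced upon us by the identity character of $\iota$.
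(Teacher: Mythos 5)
Your proof is correct and matches the paper, whose entire proof is ``Clear by definition'': the witnessing pair $(a,t)=(b,1)$ is indeed all that is needed since $\iota(\bfit F)=\bfit F$. No issues.
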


\begin{proof}
Clear by definition.
\end{proof}

\begin{prop}
\label{prop:pullback is geom}
Let $A$ and $B$ be finite sets such that $|A| = |B|$.
Let $\sigma : A \to B$ be a bijection.
Then the morphism $\sigma^* : \ZBpos \to \ZApos, F \mapsto F \circ \sigma$ is geometric with respect to any set of elements of degree $0$.
\end{prop}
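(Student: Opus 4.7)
The plan is to unpack the definition of geometricity directly using the pointwise formula defining $\sigma^*$. Fix any finite collection $F_1, \ldots, F_n \in \ZBpos \cap \mathbb Z^B_0$ of degree-zero elements and write $\bfit F := {}^t\!\matac{F_1}{\cdots}{F_n}$. According to Definition \ref{def:geometric}, but now applied with the roles of $A$ and $B$ interchanged (since the source of $\sigma^*$ is indexed by $B$ and the target by $A$), I must show that for each $a \in A$ there exist $b \in B$ and a nonnegative rational $t$ with $\sigma^*(\bfit F)(a) = t \bfit F(b)$.

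Coordinate-wise, $\sigma^*(F_i)(a) = (F_i \circ \sigma)(a) = F_i(\sigma(a))$ for every $i$, so the column vector $\sigma^*(\bfit F)(a)$ is literally $\bfit F(\sigma(a))$. Choosing $b := \sigma(a) \in B$ and $t := 1$ (a nonnegative rational number) gives $\sigma^*(\bfit F)(a) = 1 \cdot \bfit F(b)$, which is exactly the geometricity condition. Since this argument does not use any special feature of $F_1, \ldots, F_n$ beyond their being elements of degree $0$, the map $\sigma^*$ is geometric with respect to any such set.

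There is essentially no obstacle here: the proposition is a one-line unpacking of the pointwise definition of the pullback $\sigma^*$, with the witness $b$ supplied tautologically by $\sigma(a)$ and the scalar $t = 1$. Note in particular that the bijectivity of $\sigma$ is not actually exploited for geometricity itself; the argument would go through for any map $\sigma : A \to B$.
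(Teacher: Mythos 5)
Your proof is correct and is exactly the unpacking the paper intends — its own proof is just ``Clear by definition,'' and your witness $b = \sigma(a)$, $t = 1$ is the tautological verification. The aside that bijectivity is not needed is also fine (the paper defines $\sigma^*$ for arbitrary maps), so nothing further is required.
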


\begin{proof}
Clear by definition.
\end{proof}

The property ``geometric with respect to a generating set of the unit group'' is remarkable.

\begin{prop}
Let $A, B$ be finite sets.
Let $R \subset \ZApos$ be a subsemiring.
Assume that $R$ is generated by a subgroup $R_0$ of $\mathbb Z^A_0$.
Let $\nu:R \to \ZBpos$ be a geometric homomorphism with respect to a generating set of $R_0$.
Then $\nu$ is geometric with respect to any elements of $R_0$.
\end{prop}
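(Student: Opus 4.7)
The plan is to reduce geometricity with respect to an arbitrary tuple of elements of $R_0$ to geometricity with respect to the hypothesized generating set, by realizing the former as an integer linear combination of the latter and then transporting that relation across $\nu$.

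Let $\{F_1,\ldots,F_n\}$ be the generating set of $R_0$ against which $\nu$ is assumed geometric, and let $F'_1,\ldots,F'_k \in R_0$ be any elements against which we want to verify geometricity. Since $R_0$ is an abelian group under $\odot$ generated by the $F_i$, I would first write $F'_j = \bigodot_{i=1}^{n} F_i^{s_{ji}}$ for some integers $s_{ji}$, and assemble the matrix $S = (s_{ji}) \in M_{k\times n}(\mathbb Z)$. Because $\odot$ on $\mathbb Z^A_0$ is pointwise standard addition, this yields the identity $\bfit F'(a) = S\bfit F(a)$ in $\mathbb Z^k$, valid for every $a\in A$, where $\bfit F = {}^t\!\matac{F_1}{\cdots}{F_n}$ and $\bfit F' = {}^t\!\matac{F'_1}{\cdots}{F'_k}$.

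The next step is to apply $\nu$. Each $F_i$ is invertible in $R$, so $\nu(F_i)$ is invertible in $\nu(R)$, and by Lemma \ref{unitgroup} the images $\nu(F_i)$ lie in the unit group $\mathbb Z^B_0$ of $\ZBpos$. Consequently the identity $\nu(F'_j) = \bigodot_i \nu(F_i)^{s_{ji}}$ holds in $\mathbb Z^B_0$, which, by the same linearization of $\odot$ into pointwise addition, gives $\nu(\bfit F')(b) = S\, \nu(\bfit F)(b)$ for every $b \in B$. Fixing $b$, the geometricity hypothesis furnishes $a \in A$ and a nonnegative rational $t$ with $\nu(\bfit F)(b) = t\bfit F(a)$; multiplying by $S$ on the left yields
$$\nu(\bfit F')(b) \;=\; t\,S\,\bfit F(a) \;=\; t\,\bfit F'(a),$$
which is exactly the required condition for $F'_1,\ldots,F'_k$.

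No serious obstacle arises: the argument is purely linear-algebraic. Its content is that a single integer matrix $S$ expressing the change of generators in $R_0$ works simultaneously on both sides of $\nu$, because the $\odot$-multiplication in the unit groups $\mathbb Z^A_0$ and $\mathbb Z^B_0$ linearizes to pointwise integer addition, and $\nu$, being a semiring homomorphism, respects this linearization on units.
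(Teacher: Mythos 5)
Your proof is correct and follows essentially the same route as the paper's: express the new elements via an integer matrix $S$ applied to the generating tuple, use that $\nu$ is a semiring homomorphism (so $\odot$-monomials, hence the action of $S$, commute with $\nu$ on the degree-zero units), and then transport the geometricity witness $(a,t)$ through $S$. The only cosmetic difference is that you explicitly justify the invertibility of the $\nu(F_i)$ in $\mathbb Z^B_0$, which the paper leaves implicit.
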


\begin{proof}
Let $\{ F_1, \ldots, F_n \}$ be a generating set of $R_0$ such that $\nu$ is geometric with respect to them.
Let $G_1, \ldots, G_m$ be any elements of $R_0$.
We show that $\nu$ is geometric with respect to $G_1, \ldots, G_m$.

Let $\bfit F := {}^t\! \matac {F_1} {\cdots} {F_n}$ and $\bfit G = {}^t\! \matac {G_1} {\cdots} {G_m}$.
Since $\{ F_1 ,\ldots, F_n \}$ generates $R_0$, there exists an integer matrix $T$ such that $\bfit G = T \bfit F$.
Thus $\nu(\bfit G) = \nu(T \bfit F) = T \nu(\bfit F)$, where the second equality holds by the following reason:
Let $T = (t_{ij})_{i,j}$.
Then the $i$-th component of $T \bfit F$ is
$$\matac {t_{i1}}{\cdots}{t_{in}} \matca {F_1} {\vdots} {F_n} = t_{i1}F_1 + \cdots + t_{in}F_n = F_1^{t_{i1}} \odot \cdots, \odot F_n^{t_{in}}.$$
Since $\nu$ is a semiring homomorphism, the $i$-th component of $\nu(T \bfit F)$ is
$$\nu(F_1^{t_{i1}} \odot \cdots \odot F_n^{t_{in}}) = \nu(F_1)^{t_{i1}} \odot \cdots \odot \nu(F_n)^{t_{in}} = \matac {t_{i1}}{\cdots}{t_{in}} \matca {\nu(F_1)} {\vdots} {\nu(F_n)},$$
which is the $i$-th component of $T \nu(\bfit F)$.

Since $\nu$ is geometric with respect to $F_1, \ldots, F_n$, for any $b \in B$, there exist some $a \in A$ and some rational number $t \geq 0$ such that $\nu(\bfit F)(b) = t \bfit F(a)$.
Hence $\nu(\bfit G)(b) = T \nu(\bfit F)(b) = t T \bfit F(a) = t \bfit G(b)$.
Therefore $\nu$ is geometric with respect to $G_1, \ldots, G_m$.
\end{proof}

\begin{cor}
Let $A,B, R, R_0$ and $\nu$ be as in the previous proposition.
Then the following are equivalent:
\begin{enumerate}[$(1)$]
\item $\nu$ is geometric with respect to any set of elements in $R_0$,
\item $\nu$ is geometric with respect to a generating set of $R_0$.
\end{enumerate}
\end{cor}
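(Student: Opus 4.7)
The plan is to observe that this corollary is essentially a clean packaging of the previous proposition, so both implications should follow with essentially no new work.

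For the implication $(1) \Rightarrow (2)$, I would first note that since $R_0$ is a subgroup of the free abelian group $\mathbb Z^A_0$ (of rank $|A|-1$), it admits a finite generating set $\{F_1, \ldots, F_n\} \subset R_0$. By hypothesis (1), $\nu$ is geometric with respect to every finite tuple of elements in $R_0$, and in particular with respect to this generating set, yielding (2). This direction is immediate from the definitions.

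For the implication $(2) \Rightarrow (1)$, I would directly invoke the previous proposition. Assume $\nu$ is geometric with respect to some generating set $\{F_1, \ldots, F_n\}$ of $R_0$. Then the previous proposition asserts that $\nu$ is geometric with respect to \emph{any} tuple of elements $G_1, \ldots, G_m$ of $R_0$, which is precisely condition (1). There is no obstacle here since the previous proposition does all the real work (expressing an arbitrary tuple via an integer matrix $T$ acting on the generating tuple, then transporting the geometricity condition through $T$ by linearity of $\nu$ on the unit group).

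In short, since $(1)$ trivially specializes to $(2)$ and $(2) \Rightarrow (1)$ is exactly the statement of the preceding proposition, the proof reduces to a two-line argument citing that proposition together with the finite generation of $R_0$.
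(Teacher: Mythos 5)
Your proposal is correct and matches the paper's (implicit) reasoning: the corollary is stated without proof precisely because $(2)\Rightarrow(1)$ is the preceding proposition verbatim, and $(1)\Rightarrow(2)$ follows since $R_0$, being a subgroup of the finitely generated free abelian group $\mathbb Z^A_0$, admits a finite generating set to which $(1)$ can be applied.
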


We say $\nu$ is \textit{geometric} if $\nu$ satisfies the condition $(1)$ or $(2)$, and hence  $(1)$ and $(2)$.
This terminology is compatible with Definition \ref{def:geometric} (2).

We give some examples of geometric homomorphisms.
We use the following lemma.

\begin{lem}
\label{lem:trop eq}
Let $n \geq 2$ be an integer.
Let $a_1, \ldots, a_n ,c \in \mathbb T$ be any elements.
Assume that $a_i \oplus a_j = c$ for any distinct $i, j$.
Then there exists $i$ such that $a_i \leq c$ and $a_j = c$ for any $j \in \{ 1, \ldots, n \} \smallsetminus \{ i \}$.
\end{lem}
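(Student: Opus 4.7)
The plan is to unfold the tropical sum $\oplus$ as the ordinary maximum and then do an elementary case split. The assumption $a_i \oplus a_j = c$ for every distinct pair $i,j$ says that $\max(a_i, a_j) = c$ whenever $i \neq j$. Since $n \geq 2$, every index $k$ appears in at least one such pair, so I immediately get $a_k \leq c$ for every $k \in \{1, \ldots, n\}$.

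The key observation is then that at most one of the $a_k$ can be strictly less than $c$. Indeed, if there were two distinct indices $i, j$ with $a_i < c$ and $a_j < c$, then $\max(a_i, a_j) < c$, contradicting the hypothesis that $a_i \oplus a_j = c$. Hence either all $a_k$ equal $c$, in which case any choice of $i$ works (with $a_i = c \leq c$), or there is a unique index $i$ with $a_i < c$ and $a_j = c$ for every $j \neq i$. In both cases the conclusion of the lemma holds. There is no real obstacle here — the statement is essentially a direct unpacking of $\oplus = \max$ together with the trivial fact that two values both strictly below $c$ cannot have maximum equal to $c$.
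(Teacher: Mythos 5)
Your proof is correct and takes essentially the same route as the paper's: both arguments just unpack $\oplus$ as $\max$ and observe that two values strictly below $c$ cannot have maximum $c$. The only cosmetic difference is that you first establish $a_k \leq c$ for all $k$ and then bound the number of strict inequalities, whereas the paper cases directly on whether some $a_i$ differs from $c$.
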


\begin{proof}
If $a_i = c$ for any $i$, there are no things to say.
If not, we may assume that $a_1 \neq c$ without loss of generality.
For any $j=2, \ldots, n$, $a_1 \oplus a_j = c$ by the assumption.
It means that $a_1 < c$ and $a_j = c$. 
\end{proof}

\begin{prop}
Let $A, B$ be finite sets.
Assume that $|A| \geq 2$.
Then any semiring homomorphism from $\ZApos$ to $\ZBpos$ is geometric.
\end{prop}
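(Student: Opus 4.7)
The plan is to reduce to classifying the semiring homomorphisms $\psi : \ZApos \to \mathbb Z \cup \{-\infty\}$: composing $\nu$ with each evaluation $\mathrm{ev}_b : \ZBpos \to \mathbb Z \cup \{-\infty\}$, $F \mapsto F(b)$, yields such a homomorphism $\psi_b = \mathrm{ev}_b \circ \nu$, and geometricity of $\nu$ is a pointwise condition on $B$. It therefore suffices to show that every semiring homomorphism $\psi : \ZApos \to \mathbb Z \cup \{-\infty\}$ is of the form $\psi(F) = t F(a^*)$ on $\mathbb Z^A_0$, for some $a^* \in A$ and some nonnegative integer $t$.

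For each $a \in A$, let $E_a \in \ZApos$ be the indicator with $E_a(a) = 1$ and $E_a(a') = 0$ for $a' \neq a$, and set $m_a := \psi(E_a) \in \mathbb Z \cup \{-\infty\}$. The decisive observation, which is where $|A| \geq 2$ is used, is the identity
\[
E_a \oplus E_{a'} = E_a \odot E_{a'} \quad (a \neq a'),
\]
since both sides are the map taking value $1$ at $a$ and $a'$ and $0$ elsewhere. Applying $\psi$ yields $\max(m_a, m_{a'}) = m_a + m_{a'}$.

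I would next verify that every $m_a$ is finite. Fix distinct $a, a' \in A$ and consider the unit $H \in \mathbb Z^A_0$ with $H(a) = 1$, $H(a') = -1$, and $H(a'') = 0$ otherwise. The pointwise absolute value $H \oplus H^{-1}$ equals $E_a \odot E_{a'}$, so
\[
\psi(E_a \odot E_{a'}) = \psi(H \oplus H^{-1}) = \max(\psi(H), -\psi(H)) = |\psi(H)| \in \mathbb Z.
\]
Hence $m_a + m_{a'} \in \mathbb Z$, which forces both $m_a, m_{a'} \in \mathbb Z$. Since $|A| \geq 2$, every element of $A$ sits in such a pair, so all $m_a \in \mathbb Z$. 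The earlier identity then becomes $\min(m_a, m_{a'}) = 0$ for every distinct pair, so every $m_a \geq 0$ and at most one index $a^*$ satisfies $m_{a^*} > 0$; set $t := m_{a^*} \geq 0$ (choosing $a^*$ arbitrarily if all $m_a = 0$).

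Finally, I would pin down $\psi$ on $\mathbb Z^A_0$ by a telescoping identity. Given $F \in \mathbb Z^A_0$, choose an integer $N$ large enough that $F(a) + N \geq 0$ for every $a$; then in $\ZApos$ one has $F \odot N \mathbf 1 = \bigodot_{a \in A} E_a^{F(a) + N}$, where $\mathbf 1 = \bigodot_a E_a$ is the constant-$1$ map. Applying $\psi$ and cancelling the common $N \sum_a m_a$ contributions gives
\[
\psi(F) = \sum_{a \in A} F(a) m_a = t F(a^*).
\]
Taking $\psi = \psi_b$ for each $b \in B$ and an arbitrary tuple $F_1, \ldots, F_n \in \mathbb Z^A_0$, this yields $\nu(\bfit F)(b) = t \bfit F(a^*)$, so $\nu$ is geometric with respect to any elements of degree $0$, and therefore geometric by the corollary preceding the proposition. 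The crux of the whole argument is spotting the tropical identity $E_a \oplus E_{a'} = E_a \odot E_{a'}$, which collapses when $|A| = 1$, explaining why the hypothesis $|A| \geq 2$ cannot be dropped.
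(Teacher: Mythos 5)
Your proof is correct, and it takes a genuinely different route from the paper's. The paper fixes a base point $a_0\in A$, works with the specific units $F_i\in\mathbb Z^A_0$ taking value $1$ at $a_0$, $-1$ at $a_i$ and $0$ elsewhere, notes the relations $F_i\oplus F_{i'}=F_i\oplus 0$, and extracts the dichotomy from Lemma \ref{lem:trop eq} by a two-case analysis; it never leaves the unit group. You instead classify all semiring characters $\psi:\ZApos\to\mathbb Z\cup\{-\infty\}$ obtained by evaluating $\nu$ at a point of $B$: the identity $E_a\oplus E_{a'}=E_a\odot E_{a'}$ on the degree-one indicators (valid precisely because $|A|\ge 2$), together with the observation that $E_a\odot E_{a'}$ is the "absolute value" of a unit and hence has finite image, forces $\min(\psi(E_a),\psi(E_{a'}))=0$ for every pair, so the values $\psi(E_a)$ are nonnegative integers with at most one positive; the telescoping identity then yields the linear formula $\psi(F)=\sum_{a}F(a)\psi(E_a)=tF(a^*)$ on $\mathbb Z^A_0$. (The only implicit step, that $\psi$ sends units of $\ZApos$ to units of $\mathbb Z\cup\{-\infty\}$, i.e.\ to finite integers, is immediate and you clearly use it correctly.) Your version buys a stronger, reusable structural statement --- every character of $\ZApos$ is a nonnegative multiple of evaluation at a single point of $A$ on the unit group --- and it establishes geometricity with respect to \emph{arbitrary} degree-zero tuples directly, without routing through the corollary reducing to a generating set; the paper's version is shorter and needs only the one ad hoc tropical-equation lemma. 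Both proofs isolate the same reason the hypothesis $|A|\ge 2$ is needed.
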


\begin{proof}
Let $A = \{ a_0, a_1, \ldots, a_n \}$.
For any $i = 1, \ldots, n$, let $F_i \in \mathbb Z^A_0$ be the map
$$F_i(a_j) = \begin{cases}
-1 &\text{ if } j=i \\
1 &\text{ if } j=0 \\
0 &\text{ otherwise}.\\
\end{cases}$$
Thus $\{ F_1, \ldots, F_n \}$ is a generating set of $\mathbb Z^A_0$.
Let $\bfit F := {}^t\! \matac {F_1} {\cdots} {F_n}$.
Note that $\bfit F(a_0) = {}^t\! \matad 11{\cdots}1$ and
$$\bfit F(a_j) = {}^t (0 \ \cdots \ 0 \ \underset{\underset{j\text{-th}}{\uparrow}}{-1} \ 0 \ \cdots \ 0)$$
for any $j=1, \ldots, n$.

Let $\nu : \ZApos \to \ZBpos$ be a homomorphism.
We show that $\nu$ is geometric with respect to $F_1, \ldots, F_n$.
Let $F \in \ZApos$ be the map
$$F(a_j) := \begin{cases}
1 &\text{ if } j=0 \\
0 &\text{ otherwise.}
\end{cases}$$
It is easily checked that $F_i \oplus F_{i'} = F_i \oplus 0 = F$ for any distinct $i, i'$.
Since $\nu$ is a semiring homomorphism, $\nu(F_i) \oplus \nu(F_{i'}) = \nu(F_i) \oplus 0 = \nu(F)$ for any distinct $i, i'$.

Fix $b \in B$.
Then $\nu(F_i)(b) \oplus \nu(F_{i'})(b) = \nu(F_i)(b) \oplus 0 = \nu(F)(b)$ for any distinct $i, i'$.
Hence, by Lemma \ref{lem:trop eq}, one of the following conditions holds:
\begin{enumerate}[(1)]
\item $\nu(F_1)(b) = \nu(F_2)(b) = \cdots = \nu(F_n)(b) = \nu(F)(b) \geq 0$,
\item there exists $i \in \{ 1, \ldots, n\}$ such that $\nu(F_i)(b) \leq 0$ and $\nu(F_{i'})(b) = 0 = \nu(F)(b)$ for any $i' \in \{ 1, \ldots, n \} \smallsetminus \{ i \}$.
\end{enumerate}
In the case $(1)$, $\nu(\bfit F)(b) = t \bfit F(a_0)$, where $t = \nu(F)(b) \geq 0$.
In the case $(2)$, $\nu(\bfit F)(b) = t \bfit F(a_i)$, where $t = -\nu(F_i)(b) \geq 0$.
Therefore $\nu$ is geometric.
\end{proof}

In the next example, we use the following notation:
Let $A$ be a finite set.
Let $F_1, F_2 \in \mathbb Z^A_0$ be any maps of degree $0$.
Let $\bfit F := \matba {F_1} {F_2}$.
For any vector $\bfit a = \matba {a_1} {a_2} \in \mathbb Z^2$, we denote
$$\det \matab {\bfit F}{\bfit a} := \det \matbb {F_1}{a_1}{F_2}{a_2} := a_2F_1 - a_1F_2 = F_1^{a_2} \odot F_2^{-a_1},$$
which is in $\mathbb Z^A_0$.
Note that $\det \matab {\bfit F}{\bfit a}$ is a Boolean Laurent monomial of $F_1, F_2$.
Hence, if a subsemiring $R \subset \ZApos$ includes $F_1$ and $F_2$, for any semiring $S$ and a homomorphism $\nu: R \to S$, we have $\nu \left( \det \matab {\bfit F}{\bfit a} \right) = \det \matab {\nu (\bfit F)}{\bfit a}$.

\begin{prop}
Let $A=\{ 1, 2, 3 \}$, and $B$ be a finite set.
Let $F_1, F_2 \in \mathbb Z^A_0$ be two distinct maps of degree $0$.
Let $R_0$ be the subgroup of $\mathbb Z^A_0$ generated by $\{ F_1, F_2 \}$, and $R$ be the subsemiring of $\ZApos$ generated by $R_0$.
Assume that $R_0$ is a free abelian group of rank $2$.
Then any semiring homomorphism from $R$ to $\ZBpos$ is geometric.
\end{prop}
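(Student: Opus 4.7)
The plan is to reduce the problem to analyzing homomorphisms $R \to \mathbb T$ componentwise. For each $b \in B$, composing $\nu$ with the evaluation $\mathrm{ev}_b : \ZBpos \to \mathbb T$, $F \mapsto F(b)$, yields a semiring homomorphism $\mu_b := \mathrm{ev}_b \circ \nu : R \to \mathbb T$. Geometricity of $\nu$ with respect to $\{F_1, F_2\}$ amounts to showing that for every $b \in B$, $\mu_b$ has the form $F \mapsto t_b F(k_b)$ for some $k_b \in A$ and $t_b \geq 0$ (such $t_b$ is automatically rational since $\mu_b(F_i) = \nu(F_i)(b) \in \mathbb Z$ and $F_i(k_b) \in \mathbb Z$). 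So it suffices to classify all semiring homomorphisms $\mu : R \to \mathbb T$.

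To this end, identify $R_0 \cong \mathbb Z^2$ via the basis $\{F_1, F_2\}$, and for each $k \in A$ let $\ell_k : R_0 \to \mathbb Z$ be the linear form $(a, b) \mapsto a F_1(k) + b F_2(k)$. Then $\ell_1 + \ell_2 + \ell_3 = 0$ (since $F_1, F_2 \in \mathbb Z^A_0$), and the $\ell_k$ are pairwise linearly independent in $(R_0 \otimes \mathbb R)^*$ (the sum is zero in this $2$-dimensional dual and each $\ell_k$ is nonzero, so any two being dependent would force all three into a line). Any homomorphism $\mu$ maps the units $F_1, F_2$ into $\mathbb R$ and thereby restricts to a $\mathbb Z$-linear form $L : R_0 \to \mathbb R$. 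Since every element of $R$ is a finite $\oplus$-sum of elements of $R_0$, and two finite sets $S_1, S_2 \subset R_0$ represent the same element of $R$ exactly when $\max_{p \in S_1} \ell_k(p) = \max_{p \in S_2} \ell_k(p)$ for all $k \in A$, well-definedness of $\mu$ is equivalent to: whenever these $\ell_k$-maxima agree for all three $k$, the corresponding $L$-maxima also agree.

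The central claim is that this forces $L = t \ell_k$ for some $k \in A$ and $t \geq 0$. The ``if'' direction is immediate from positive homogeneity: $\max_S L = t \max_S \ell_k$. For the converse, suppose $L$ is not of this form. In the plane $(R_0 \otimes \mathbb R)^*$ the three rays $\mathbb R_{\geq 0} \ell_k$ cut out three open cones, and $L$ lies in one of them, so one may write $L = s_i \ell_i + s_j \ell_j$ with $s_i, s_j > 0$, where $k$ is the remaining index. Choose a positive integer $C$ divisible by $|\det(\ell_a, \ell_b)|$ for every pair $\{a, b\} \subset A$. Then the triangle $T_C := \{p \in R_0 \otimes \mathbb R : \ell_k(p) \leq C \text{ for each } k \in A\}$ has three lattice vertices $P_{ab} \in R_0$ characterized by $\ell_a(P_{ab}) = \ell_b(P_{ab}) = C$, which forces $\ell_c(P_{ab}) = -2C$ for the remaining index $c$. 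A direct computation yields
\[
L(P_{ij}) = (s_i + s_j) C, \quad L(P_{ik}) = (s_i - 2 s_j) C, \quad L(P_{jk}) = (-2 s_i + s_j) C,
\]
so $L(P_{ij})$ strictly dominates the other two. Setting $S_1 := \{P_{ik}, P_{jk}\}$ and $S_2 := \{P_{ij}, P_{ik}, P_{jk}\}$, one verifies $\max_{S_1} \ell_m = \max_{S_2} \ell_m = C$ for every $m \in A$, while $\max_{S_1} L < \max_{S_2} L$, contradicting well-definedness.

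The main obstacle is this explicit geometric construction: locating three symmetrically placed lattice vertices $P_{ab}$ of $T_C$ that simultaneously saturate each $\ell_k = C$, and then exhibiting that the vertex opposite to $\ell_k$ strictly dominates the other two under $L$. Once this classification of $\mu : R \to \mathbb T$ is in hand, the geometricity of $\nu$ follows immediately as indicated in the first paragraph.
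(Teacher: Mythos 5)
Your proof is correct, and it takes a genuinely different route from the paper's. The paper proves this directly: it manufactures two specific elements $G_1 = \det\matab{\bfit F}{-\bfit F(1)}$ and $G_2 = \det\matab{\bfit F}{-\bfit F(1)-\bfit F(2)}$ of $R_0$ satisfying the single tropical identity $G_1 \oplus G_2 = G_1 \oplus 0 = G_2 \oplus 0$, pushes this identity through $\nu$, and applies a trichotomy lemma (Lemma \ref{lem:trop eq}) at each $b \in B$; each of the three resulting cases yields a vanishing determinant $\det\matab{\nu(\bfit F)(b)}{\bfit v} = 0$ that pins $\nu(\bfit F)(b)$ to a specific ray, with the surviving inequality forcing the scalar to be nonnegative. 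You instead classify all semiring homomorphisms $R \to \mathbb T$: the restriction to the unit group $R_0$ is a linear form $L$, the dual vectors $\ell_1,\ell_2,\ell_3$ are pairwise independent and sum to zero (hence positively span the plane), and if $L$ lay in the interior of one of the three cones you exhibit two finite subsets of $R_0$ with identical $\ell_k$-maxima but distinct $L$-maxima, contradicting that $\mu$ is a well-defined function. The underlying mechanism is the same in both arguments --- exploiting tropical relations among Laurent monomials in $F_1, F_2$ that hold in $R$ --- but your cone decomposition makes the geometric reason for the result transparent and suggests a strategy for Conjecture \ref{conj} in general, at the cost of an explicit lattice-point construction (the divisibility choice of $C$ guaranteeing $P_{ab} \in R_0$), whereas the paper's argument is shorter and purely computational. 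All the details I checked go through: the points $P_{ab}$ are indeed lattice points because $\det(\ell_a,\ell_b) = \pm d$ divides $C$, the value $\ell_c(P_{ab}) = -2C$ follows from $\ell_1+\ell_2+\ell_3=0$, and negative multiples of a ray $\ell_k$ correctly fall into the opposite open cone, so your case analysis is exhaustive.
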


\begin{proof}
Let $\bfit F := \matba {F_1} {F_2}$.
Note that $\bfit F(1) + \bfit F(2) + \bfit F(3) = \matba 00$ since $F_1$ and $F_2$ have degree $0$.
Let $d := \det \matab {\bfit F(1)}{\bfit F (2)}$.
Then we have
$$\det \matab {\bfit F(1)}{\bfit F(3)} = \det \matab {\bfit F(1)}{-\bfit F(1) - \bfit F(2)} = -d$$
and
$$\det \matab {\bfit F(2)}{\bfit F(3)} = \det \matab {\bfit F(2)}{-\bfit F(1) - \bfit F(2)} = d.$$
Note that $d \neq 0$ since $R_0$ has rank $2$.
We may assume that $d>0$ without loss of generality (if not, exchange $F_1$ and $F_2$).
Also note that $\bfit F(i) \neq \mathbf 0$ for any $i \in A$ since $R_0$ has rank $2$.

Let $G_1 := \det \matab {\bfit F}{-\bfit F(1)}$, and $G_2 := \det \matab {\bfit F}{-\bfit F(1) - \bfit F(2)}$, which are in $\mathbb Z^A_0$.
For any $i \in A$,
$$G_1(i) = \det \matab {\bfit F(i)}{-\bfit F(1)} = \begin{cases}
0 &\text{ if } i=1 \\
d &\text{ if } i=2 \\
-d &\text{ if } i=3, \\
\end{cases}$$
and similarly,
$$G_2(i) = \det \matab {\bfit F(i)}{-\bfit F(1) - \bfit F(2)} = \begin{cases}
-d &\text{ if } i=1 \\
d &\text{ if } i=2 \\
0 &\text{ if } i=3. \\
\end{cases}$$
It is easily checked that $G_1 \oplus G_2 = G_1 \oplus 0 = G_2 \oplus 0$.
Since $\nu$ is a semiring homomorphism, we have $\nu(G_1) \oplus \nu(G_2) = \nu(G_1) \oplus 0 = \nu(G_2) \oplus 0$.
Thus, for any $b \in B$, by Lemma \ref{lem:trop eq}, one of the following conditions holds:
\begin{enumerate}[(1)]
\item $\nu(G_1)(b) = \nu(G_2)(b) \geq 0$,
\item $\nu(G_1)(b) = 0 \geq \nu(G_2)(b)$,
\item $\nu(G_2)(b) = 0 \geq \nu(G_1)(b)$.
\end{enumerate}
In the case $(1)$, we have $\det \matab {\nu (\bfit F)(b)}{-\bfit F(1)}$ = $\det \matab {\nu (\bfit F)(b)}{-\bfit F(1)-\bfit F(2)}$.
Hence $\det \matab {\nu (\bfit F)(b)}{\bfit F(2)} = 0$, which means that $\nu(\bfit F)(b) = t\bfit F(2)$ for some $t \in \mathbb R$.
Then
$$ 0 \leq \det \matab {\nu (\bfit F)(b)}{-\bfit F(1)} = \det \matab {t \bfit F(2)}{-\bfit F(1)} = td,$$
hence $t \geq 0$.
In the case $(2)$, we have $\det \matab {\nu (\bfit F)(b)}{-\bfit F(1)} = 0$, which means that $\nu(\bfit F)(b) = t\bfit F(1)$ for some $t \in \mathbb R$.
Then
$$ 0 \geq \det \matab {\nu (\bfit F)(b)}{-\bfit F(1)-\bfit F(2)} = \det \matab {t \bfit F(1)}{-\bfit F(1)-\bfit F(2)} = -td,$$
hence $t \geq 0$.
In the case $(3)$, we have $\det \matab {\nu (\bfit F)(b)}{-\bfit F(1)-\bfit F(2)} = 0$, which means that $\nu(\bfit F)(b) = t(-\bfit F(1)-\bfit F(2)) = t\bfit F(3) $ for some $t \in \mathbb R$.
Then
$$ 0 \geq \det \matab {\nu (\bfit F)(b)}{-\bfit F(1)} = \det \matab {t \bfit F(3)}{-\bfit F(1)} = -td,$$
hence $t \geq 0$.
\end{proof}

\section{Smoothness}

As an application of the results so far, we discuss about the smoothness of a 1-dimensional tropical fan at $\mathbf 0$.
We refer \cite{CDMY} for the definition of smoothness.
However, we slightly change the definition since they do not consider weights.

\begin{dfn}
\label{def:smooth}
Let $X=(X,\omega_X)$ be a 1-dimensional tropical fan in $\mathbb R^n$ with $r$ rays.
Then, $X$ is \textit{smooth at} $\mathbf 0$ if the following conditions are satisfied:
\begin{enumerate}[(1)]
\item There exists a matrix $T \in GL(n, \mathbb Z)$ such that the linear map defined by $T$ induces a bijection from $|L_{n, r}|$ to $|X|$, where $L_{n,r}$ is the 1-dimensional tropical fan introduced in Example \ref{exL}, and
\item $\omega_X(\rho) = 1$ for all $\rho \in X(1)$.
\end{enumerate}
\end{dfn}

We give a condition for a 1-dimensional tropical fan to be smooth at $\mathbf 0$ in terms of weighed evaluation maps.
We use the following lemmas.
In the proof of the first one, we use a property of integer matrices, which is shown in appendix.

\begin{lem}
\label{lem:smooth equiv}
Let $X, Y \subset \mathbb R^n$ be 1-dimensional tropical fans.
If there exist morphisms $\mu_1 : X \to Y$ and $\mu_2 : Y \to X$ such that $\mu_1 \circ \mu_2 = \id$ and $\mu_2 \circ \mu_1 = \id$, then $\mu_1$ and $\mu_2$ are defined by matrices in $GL(n, \mathbb Z)$.
\end{lem}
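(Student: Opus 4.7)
The plan is to promote the integer matrices representing $\mu_1, \mu_2$ to unimodular ones by extending them freely on a complement of the supports. First I would pick any integer matrices $T_1, T_2 \in M_n(\mathbb Z)$ defining $\mu_1, \mu_2$ respectively, and set $V := \mathrm{span}_{\mathbb R} |X|$ and $V' := \mathrm{span}_{\mathbb R} |Y|$. The assumption $\mu_2 \circ \mu_1 = \id_{|X|}$ gives $T_2 T_1 \bfit q = \bfit q$ for every $\bfit q \in |X|$, and since $|X|$ spans $V$ over $\mathbb R$, this upgrades to $T_2 T_1|_V = \id_V$; symmetrically $T_1 T_2|_{V'} = \id_{V'}$. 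Hence $T_1|_V : V \to V'$ is an $\mathbb R$-linear isomorphism with inverse $T_2|_{V'}$, and in particular $\dim V = \dim V'$.

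Next I would pass to the saturated lattices $V \cap \mathbb Z^n$ and $V' \cap \mathbb Z^n$. Because $T_1$ has integer entries and maps $V$ to $V'$, it sends $V \cap \mathbb Z^n$ into $V' \cap \mathbb Z^n$; symmetrically for $T_2$. Combined with the inverse relation on $V, V'$, this yields a $\mathbb Z$-linear isomorphism $\bar T_1 : V \cap \mathbb Z^n \to V' \cap \mathbb Z^n$ with inverse $T_2|_{V' \cap \mathbb Z^n}$. These sublattices are saturated in $\mathbb Z^n$ (they are of the form ``real subspace $\cap\ \mathbb Z^n$''), hence direct summands. Invoking the appendix's property of integer matrices — that any $\mathbb Z$-linear isomorphism between two saturated sublattices of $\mathbb Z^n$ of equal rank extends to an element of $GL(n, \mathbb Z)$ by choosing an arbitrary $\mathbb Z$-linear isomorphism between complementary summands — I would obtain $T_1' \in GL(n, \mathbb Z)$ with $T_1'|_{V \cap \mathbb Z^n} = \bar T_1$. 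Since $V \cap \mathbb Z^n$ spans $V$ over $\mathbb R$, it follows that $T_1'|_V = T_1|_V$, so $T_1'|_{|X|} = \mu_1$. Setting $T_2' := (T_1')^{-1} \in GL(n, \mathbb Z)$, the identity $T_2'|_{V'} = (T_1'|_V)^{-1} = T_2|_{V'}$ shows that $T_2'$ defines $\mu_2$.

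The main obstacle is the lattice-theoretic statement from the appendix: that a saturated sublattice of $\mathbb Z^n$ is a direct summand, and that isomorphisms between such sublattices extend to unimodular automorphisms of $\mathbb Z^n$. Once this is granted, the remaining content — lifting the relation $T_2 T_1 = \id$ from $|X|$ to $V$, and verifying that the integer matrix $T_1$ restricts to a lattice isomorphism between the saturations — is essentially automatic from the fact that $|X|$ spans $V$ as a real vector space and $V \cap \mathbb Z^n$ spans $V$.
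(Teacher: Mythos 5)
Your argument is correct, and it reaches the conclusion by a genuinely different route from the paper. The paper never forms the saturations $V \cap \mathbb Z^n$ and $V' \cap \mathbb Z^n$: it takes the matrices $D, D'$ whose columns are the primitive direction vectors of $X$ and of their images under $\mu_1$, observes $T_1 D = D'$ and $T_2 D' = D$, and invokes an appendix lemma stating that two integer matrices, each obtainable from the other by integer left-multiplication, differ by a factor in $GL(n,\mathbb Z)$; that lemma is proved by passing to a basis of the common \emph{row} lattice in $\mathbb Z^r$ ($r$ the number of rays) and then completing to a unimodular matrix via Smith normal form. You instead lift $\mu_2 \circ \mu_1 = \id$ to $T_2T_1|_V = \id_V$ on the real span $V$, restrict to the saturated column lattices, and extend the resulting lattice isomorphism to $GL(n,\mathbb Z)$. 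The fact you flag as the main obstacle --- that a saturated sublattice of $\mathbb Z^n$ is a direct summand and that an isomorphism between two such sublattices of equal rank extends to a unimodular automorphism --- is not stated verbatim in the appendix, but it is exactly the content of the appendix's auxiliary lemmas (a matrix with an integer left inverse completes to an element of $GL(n,\mathbb Z)$, again via Smith normal form), so the debt is the same in both proofs. What your version buys is conceptual transparency: the relevant invariant is visibly the saturation of the span of the fan, and the two-sided inverse hypothesis is used exactly once, to see that $T_1$ restricts to an isomorphism of saturations. What the paper's version buys is that it works directly with the finitely many direction vectors and a self-contained matrix lemma, avoiding any discussion of saturated sublattices or complementary summands. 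Both are sound; neither has a gap beyond the shared appeal to the Smith-normal-form extension principle.
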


\begin{proof}
By symmetricity, it is enough to show that $\mu_1$ is defined by a matrix in $GL(n, \mathbb Z)$.
Let $T_1, T_2$ be $n \times n$ matrices which define $\mu_1$ and $\mu_2$ respectively.
Let $\{ \bfit d_1, \ldots, \bfit d_r \}$ be the set of primitive direction vectors of rays of $X$.
Let $\bfit d_i' := \mu_1(\bfit d_i)$ for any $i$.
Then $T_1 \bfit d_i = \bfit d_i'$ and $T_2 \bfit d_i' = \bfit d_i$ for any $i$.
Let $D := \matac {\bfit d_1}{\cdots}{\bfit d_r}$ and $D' := \matac {\bfit d_1'}{\cdots}{\bfit d_r'}$.
Then $T_1D = D'$ and $T_2D' = D$.
Hence, by Lemma \ref{lem:append1}, there exists an integer matrix $T \in GL(n, \mathbb Z)$ such that $TD=D'$.
Hence $T \bfit d_i = \bfit d_i'$ for any $i$, which means that $T$ defines the morphism $\mu_1$.
\end{proof}

\begin{lem}
\label{lem:primitive}
Let $X \subset \mathbb R^n$ be a 1-dimensional tropical fan.
Suppose that $X$ is smooth at $\mathbf 0$.
Let $T \in GL(n, \mathbb Z)$ be an integer matrix which induces a bijection from $|L_{n, r}|$ to $|X|$.
Let $\mu : L_{n, r} \to X$ be the morphism defined by $T$.
Then, for every primitive direction vector $\bfit d$ of a ray in $L_{n,r}$, the image $\mu(\bm d)$ is the primitive direction vector of a ray in $X$.
\end{lem}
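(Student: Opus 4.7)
The plan is to exploit the fact that $T \in GL(n, \mathbb Z)$, so both $T$ and $T^{-1}$ have integer entries, and hence $T$ preserves primitivity of integer vectors. Once this is observed, the statement reduces to a one-line arithmetic check.

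First I would identify the primitive direction vectors of the rays in $L_{n,r}$: by construction these are $\bfit e_0, \bfit e_1, \ldots, \bfit e_{r-1}$, and each is itself a primitive integer vector. The standard basis vectors $\bfit e_1, \ldots, \bfit e_{r-1}$ obviously have gcd of coordinates equal to $1$, and $\bfit e_0 = -(\bfit e_1 + \cdots + \bfit e_{r-1})$ has coordinates in $\{-1, 0\}$ with at least one entry equal to $-1$, so its gcd is $1$ as well.

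Next, for such a primitive direction vector $\bfit d$, note that $\mu(\bfit d) = T\bfit d$ is a nonzero integer vector (using integrality of $T$ and invertibility), and by hypothesis lies in $|X|$. Hence it lies on a unique ray $\rho \in X(1)$, and we may write $T\bfit d = k \bfit d_\rho$ where $\bfit d_\rho$ is the primitive direction vector of $\rho$ and $k$ is a positive integer. The key step is to show $k = 1$: since $T^{-1}$ also has integer entries, applying it gives $\bfit d = k\, T^{-1}\bfit d_\rho$ with $T^{-1}\bfit d_\rho$ an integer vector, so every coordinate of $\bfit d$ is divisible by $k$. Primitivity of $\bfit d$ then forces $k = 1$, and thus $\mu(\bfit d) = \bfit d_\rho$ is the primitive direction vector of $\rho$. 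I do not expect any real obstacle here — the lemma is essentially a direct consequence of $T$ lying in $GL(n, \mathbb Z)$ together with primitivity of the $\bfit e_i$.
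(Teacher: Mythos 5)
Your proof is correct and follows essentially the same route as the paper's: write $\mu(\bfit d)=T\bfit d$ as a positive integer multiple $k$ of a primitive vector, apply the integer matrix $T^{-1}$ to see that $k$ divides every coordinate of the primitive vector $\bfit d$, and conclude $k=1$. The extra details you supply (primitivity of the $\bfit e_i$ and the fact that $T\bfit d$ lands on a unique ray of $X$) are harmless elaborations of what the paper leaves implicit.
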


\begin{proof}
It is enough to show that $\mu(\bfit d)$ is primitive.
Note that $\mu(\bfit d)$ is not $\mathbf 0$ because $T^{-1} \mu (\bfit d) = \bfit d \neq \mathbf 0$.
Thus we can uniquely write as $\mu(\bfit d) = w \bfit d'$, where $w$ is a positive integer and $\bfit d'$ is primitive.
Then $\bfit d = T^{-1} \mu(\bfit d) = w T^{-1} \bfit d'$.
Since $\bfit d$ is primitive, we have $w=1$, which means that $\mu(\bfit d)$ is primitive.
\end{proof}

\begin{thm}
Let $X=(X, \omega_X)$ be a 1-dimensional tropical fan in $\mathbb R^n$.
Then, $X$ is smooth at $\mathbf 0$ if and only if the weighted evaluation map $\phi_X$ is surjective.
\end{thm}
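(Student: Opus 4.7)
The plan is to prove both implications by routing through the combinatorics of the lattice $\mathbb Z^{X(1)}_0$ and its generation by the images $F_i := \phi_X([x_i])$.

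For the forward direction, suppose $X$ is smooth at $\mathbf 0$, witnessed by a matrix $T \in GL(n, \mathbb Z)$ inducing a bijection $\mu : |L_{n,r}| \to |X|$, with all weights of $X$ equal to $1$. Since $T^{-1}$ is also integral, $\mu$ has an inverse morphism $\mu^{-1} : X \to L_{n,r}$, so by the functoriality of Proposition \ref{contra} the map $\mu^* : \Im(\phi_X) \to \Im(\phi_{L_{n,r}})$ is an isomorphism. By Lemma \ref{lem:primitive}, $\mu$ carries each primitive generator of a ray $\rho$ of $L_{n,r}$ to the primitive generator of a ray $\sigma(\rho)$ of $X$, and $\sigma : L_{n,r}(1) \to X(1)$ is a bijection. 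Applying Lemma \ref{mu and mu*} together with the fact that all weights are $1$, the isomorphism $\mu^*$ coincides with the restriction of $\sigma^* : \mathbb Z^{X(1)}_{\pos} \to \mathbb Z^{L_{n,r}(1)}_{\pos}$ to $\Im(\phi_X)$. Because $\sigma^*$ is a bijection of the ambient semirings and $\Im(\phi_{L_{n,r}}) = \mathbb Z^{L_{n,r}(1)}_{\pos}$ by Example \ref{exL}, this forces $\Im(\phi_X) = \mathbb Z^{X(1)}_{\pos}$.

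For the converse, assume $\phi_X$ is surjective. I would first verify that every weight equals $1$: the indicator map $e_\rho \in \mathbb Z^{X(1)}_{\pos}$ with $e_\rho(\rho) = 1$ and $e_\rho(\rho') = 0$ for $\rho' \neq \rho$ must be some $\phi_X(f)$, and the equality $w_\rho f(\bfit d_\rho) = 1$ with $w_\rho \in \mathbb Z_{>0}$ and $f(\bfit d_\rho) \in \mathbb Z$ forces $w_\rho = 1$. Let $\rho_1, \ldots, \rho_r$ denote the rays with primitive generators $\bfit d_1, \ldots, \bfit d_r$, and put $F_i := \phi_X([x_i])$. By Lemma \ref{unitgroup} combined with surjectivity, the full unit group $\mathbb Z^{X(1)}_0$ is generated as an abelian group by $F_1, \ldots, F_n$. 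Under the identification $\mathbb Z^{X(1)}_0 \cong \mathbb Z^{r-1}$ via $F \mapsto (F(\rho_1), \ldots, F(\rho_{r-1}))$, and noting $F_i(\rho_j) = (\bfit d_j)_i$ since the weights are $1$, the generator $F_i$ corresponds to the $i$-th row of the integer matrix $D := (\bfit d_1 \mid \cdots \mid \bfit d_{r-1})$.

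Thus the rows of $D$ generate $\mathbb Z^{r-1}$, which via Smith normal form is equivalent to the gcd of the $(r-1) \times (r-1)$ minors of $D$ being $1$; equivalently, $\bfit d_1, \ldots, \bfit d_{r-1}$ extend to a $\mathbb Z$-basis of $\mathbb Z^n$ (in particular $r - 1 \leq n$). Completing them by vectors $\bfit d_r', \ldots, \bfit d_n'$ and taking $T \in GL(n, \mathbb Z)$ with $T \bfit e_i = \bfit d_i$ for $i \leq r - 1$ and $T \bfit e_i = \bfit d_i'$ for $i \geq r$, the balancing condition gives $T \bfit e_0 = T\bigl( - \sum_{i=1}^{r-1} \bfit e_i \bigr) = -\sum_{i=1}^{r-1} \bfit d_i = \bfit d_r$, so $T$ carries $|L_{n,r}|$ bijectively onto $|X|$, proving smoothness. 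The main obstacle is this converse direction, specifically translating surjectivity of $\phi_X$ into the lattice-completability of the $\bfit d_i$; the crucial tool is Lemma \ref{unitgroup}, which upgrades surjectivity into a generation statement for $\mathbb Z^{X(1)}_0$, after which the rest is standard linear algebra over $\mathbb Z$.
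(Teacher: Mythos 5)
Your proof is correct, but the converse direction takes a genuinely different route from the paper's. For the forward direction you and the paper use the same ingredients (Example \ref{exL}, Lemma \ref{lem:primitive}, the weight-one hypothesis); the paper chases an arbitrary $F \in \ZXpos$ through $\phi_{L_{n,r}}$ explicitly, while you package the same computation as the identity $\mu^* = \sigma^*|_{\Im(\phi_X)}$ and conclude structurally --- a cosmetic difference. The real divergence is in the converse. The paper stays inside its categorical framework: it builds mutually inverse morphisms $\mu_1, \mu_2$ between $L_{n,r}$ and $X$ from the geometric morphisms $\sigma^*$ and $(\sigma^{-1})^*$ via Propositions \ref{prop:pullback is geom}, \ref{full} and \ref{faithful}, and then invokes Lemma \ref{lem:smooth equiv}, whose proof rests on the appendix lemmas on integer matrices. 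You instead bypass all of that: Lemma \ref{unitgroup} turns surjectivity into the statement that $F_1, \ldots, F_n$ generate the free abelian group $\mathbb Z^{X(1)}_0 \cong \mathbb Z^{r-1}$, which you read off as ``the rows of $(\bfit d_1 \mid \cdots \mid \bfit d_{r-1})$ generate $\mathbb Z^{r-1}$'', hence (by Smith normal form) the $\bfit d_i$ complete to a $\mathbb Z$-basis, and the balancing condition hands you the matrix $T \in GL(n,\mathbb Z)$ directly. Your route is more elementary and self-contained (it still uses Smith normal form, but only the standard equivalence between unimodular row lattices and basis completion, not the paper's Lemmas \ref{lem:append1}--\ref{lem:append3}), and it has the side benefit of making the necessary inequality $r - 1 \leq n$ explicit, which the paper leaves implicit when it writes down $L_{n,r}$ for $r = |X(1)|$. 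What the paper's approach buys in exchange is a demonstration that the smoothness criterion is a formal consequence of the fully faithful functor it has just constructed, which is the point of the section. Your treatment of the weights (extracting $w_\rho = 1$ from the indicator function $e_\rho$ being in the image) is the same divisibility argument as the paper's, phrased positively.
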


\begin{proof}
Suppose that $X$ is smooth at $\mathbf 0$.
Then there exists an integer matrix $T \in GL(n, \mathbb Z)$ which induces a bijection from $|L_{n, r}|$ to $|X|$.
Let $\mu : L_{n, r} \to X$  be the morphism defined by $T$.
In particular, $T$ induces a bijection $\sigma : L_{n,r}(1) \to X(1)$.
Take any function $F \in \ZXpos$.
Then $F \circ \sigma \in \Zpos {L_{n,r}(1)}$.
Since $\phi_{L_{n,r}} : \Bypmfcn \to \Zpos {L_{n,r}(1)}$ is surjective by Example \ref{exL}, there exists a function $f \in \Bypmfcn$ such that $\phi_{L_{n,r}} (f) = F \circ \sigma$.
We show that $\phi_X(f \circ \mu^{-1}) = F$, where $\mu^{-1}$ exists because $T^{-1}$ defines it.
Since every ray in $X$ is of the form $\sigma(\rho)$ for some ray $\rho \in L_{n,r}(1)$, it is enough to show that $\phi_X(f \circ \mu^{-1})(\sigma(\rho)) = F(\sigma(\rho))$ for any $\rho \in L_{n,r}(1)$.
By Lemma \ref{lem:primitive}, the primitive direction vector of $\sigma(\rho)$ is $\mu(\bfit d)$, where $\bfit d$ is the primitive direction vector of $\rho$.
Thus we have
$$\phi_X(f \circ \mu^{-1})(\sigma(\rho)) = (f \circ \mu^{-1})(\mu(\bfit d)) = f(\bfit d) = \phi_{L_{n,r}} (f)(\rho) = (F \circ \sigma)(\rho) = F(\sigma(\rho)),$$
where we use the assumption that the weight of $\sigma(\rho)$ is $1$ in the first equality.
Hence $\phi_X(f \circ \mu^{-1}) = F$, which means that $\phi_X$ is surjective.

Conversely, suppose that $\phi_X$ is surjective.
Let $r := |X(1)|$.
Fix a bijection $\sigma : L_{n,r}(1) \to X(1)$.
Then $\sigma$ induces the isomorphism $\sigma^* : \ZXpos \to \Zpos {L_{n,r}(1)}, F \mapsto F \circ \sigma$.
The inverse map $\sigma^{-1} : X(1) \to L_{n, r}(1)$ exists, and it induces the inverse morphism $(\sigma^*)^{-1} = (\sigma^{-1})^* : \Zpos {L_{n,r}(1)} \to \ZXpos$.
Since $\sigma^*$ and $(\sigma^{-1})^*$ are geometric by proposition \ref{prop:pullback is geom}, there exist unique morphisms $\mu_1 : L_{n,r} \to X$ and $\mu_2 : X \to L_{n,r}$ such that $\mu_1^* = \sigma^*$ and $\mu_2^* = (\sigma^{-1})^*$ by Proposition \ref{full}.
Since $(\mu_1 \circ \mu_2)^* = (\sigma^{-1})^* \circ \sigma^* = \id$ and $(\mu_2 \circ \mu_1)^* = \sigma^* \circ (\sigma^{-1})^* = \id$, we have $\mu_1 \circ \mu_2 = \id$ and $\mu_2 \circ \mu_1 = \id$ by Proposition \ref{faithful}.
Therefore, by Lemma \ref{lem:smooth equiv}, $\mu_1$ is defined by a matrix in $GL(n, \mathbb Z)$.
It means that $X$ satisfies the condition $(1)$ in Definition \ref{def:smooth}.

Assume that $\omega_X(\rho) \geq 2$ for some $\rho \in X(1)$.
For any function $f \in \Bxpmfcn$, $\phi_X(f)(\rho)$ is divided by $\omega_X(\rho)$ by the definition of weighted evaluation maps.
It contradicts to the surjectivity of $\phi_X$.
Hence $\omega_X(\rho)=1$ for all $\rho \in X(1)$.
\end{proof}

By this theorem, the 1-dimensional tropical fans $Y$ in Example \ref{exY} and $Z$ in Example \ref{exZ} are not smooth at $\mathbf 0$.

Let $X$ be a 1-dimensional tropical fan in $\mathbb R^n$ with $r$ rays which is not smooth at $\mathbf 0$.
The inclusion map $\Im(\phi_X) \hookrightarrow \Zpos {X(1)}$ is geometric by Proposition \ref{prop:inclusion}.
Fix an integer $m \geq r-1$ and a bijection $\sigma : L_{m,r}(1) \to X(1)$.
Then $\sigma$ induces the isomorphism $\sigma^* : \Zpos {X(1)} \to \Zpos {L_{m,r}(1)}$ which is geometric by Proposition \ref{prop:pullback is geom}.
Hence $\sigma^* \circ \iota :\Im(\phi_X) \to \Zpos {L_{m,r}(1)}$ is geometric.
Thus there exists a corresponding morphism $\mu : L_{m,r} \to X$ with respect to the correspondence in Proposition \ref{full}.
One may consider that $\mu$ is a ``resolution of singularity''.
Note that $\mu$ is bijective by the following proposition.

\begin{prop}
In the above situation, $\mu$ is bijective.
\end{prop}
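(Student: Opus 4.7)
The plan is to invoke Remark \ref{map of rays} applied to $\nu := \sigma^* \circ \iota$, which by Proposition \ref{full} satisfies $\nu^* = \mu$. In the notation of that remark, $L_{m,r}$ plays the role of the source fan (the ``$X$'' there) and our $X$ the role of the target (the ``$Y$'' there), so the relevant tuple is $\bfit G := {}^t\!\matac{\phi_X([x_1])}{\cdots}{\phi_X([x_n])}$.

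The first step is to verify the hypothesis $\nu(\bfit G)(\rho) \neq \mathbf 0$ for every $\rho \in L_{m,r}(1)$. Unwinding the definitions, $\nu(\bfit G)(\rho) = (\sigma^* \circ \iota)(\bfit G)(\rho) = \bfit G(\sigma(\rho))$, and by Proposition \ref{phitostar} this equals $w_{\sigma(\rho)} \bfit d_{\sigma(\rho)}$, which is nonzero because $\sigma(\rho)$ is a ray.

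The second step is to identify the induced map on rays supplied by Remark \ref{map of rays}. That map sends $\rho \in L_{m,r}(1)$ to the unique ray $\rho' \in X(1)$ with $\nu(\bfit G)(\rho) = t\bfit G(\rho')$ for some rational $t > 0$. But $\bfit G(\sigma(\rho))$ already lies on $\sigma(\rho)$ (again by Proposition \ref{phitostar}), so the uniqueness clause of the remark forces $\rho' = \sigma(\rho)$ with $t = 1$. Hence the induced ray map coincides with $\sigma$ itself, which is a bijection by construction.

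The final step is then immediate from the last assertion of Remark \ref{map of rays}: since the induced map on rays is bijective, so is $\mu = \nu^*$. No real obstacle arises here; the technical content has already been packaged into Remark \ref{map of rays}, and the only observation needed is that the ray map associated with $\sigma^* \circ \iota$ is tautologically $\sigma$.
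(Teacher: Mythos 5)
Your proposal is correct and follows essentially the same route as the paper: both compute $(\sigma^*\circ\iota)(\bfit F)(\rho)=\bfit F(\sigma(\rho))$, observe via Proposition \ref{phitostar} that this vector lies on $\sigma(\rho)$, conclude that the induced ray map of Remark \ref{map of rays} is $\sigma$ itself, and then invoke the final assertion of that remark. Your explicit check of the hypothesis $\nu(\bfit G)(\rho)\neq\mathbf 0$ is a welcome bit of extra care that the paper leaves implicit.
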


\begin{proof}
By Remark \ref{map of rays}, it is enough to show that the induced map $L_{m,r}(1) \to X(1)$ is bijective.
Let $F_i := \phi_X([x_i])$ and $\bfit F := {}^t\! \matac {F_1}{\cdots}{F_n}$.
For any $\rho \in L_{m,r}(1)$, we have $(\sigma^* \circ \iota)(\bfit F)(\rho) = \bfit F(\sigma (\rho)),$ which is in $\sigma(\rho)$ by Lemma \ref{phitostar}.
Hence the induced map $L_{m,r}(1) \to X(1)$ is same as $\sigma$, which is bijective.
\end{proof}

\setcounter{section}{0}
\renewcommand{\thesection}{\Alph{section}}
\section{Integer matrices}

In this section, we prove the property of integer matrices we used in the proof of Lemma \ref{lem:smooth equiv}.
Through out this section, $E_n$ means the $n \times n$ identity matrix.

\begin{prop and dfn}
Let $A$ be an $m \times n$ integer matrix.
Then there exist matrices $P \in GL(m, \mathbb Z)$ and $Q \in GL(n, \mathbb Z)$ such that $PAQ$ is of the form $\matbb DOOO$, where
$D=\matcc {\alpha_1}{}{O}{}{\ddots}{}{O}{}{\alpha_r}$
is a diagonal matrix such that each $\alpha_i$ is a positive integer and $\alpha_i|\alpha_{i+1}$ for each $i$.
The matrix $\matbb DOOO$ is called the \textit{Smith normal form} of $A$, which is uniquely determined by $A$.
\end{prop and dfn}

\begin{proof}
See \cite[Section 5.3]{AW}.
\end{proof}

\begin{lem}
\label{lem:append3}
Let $m,n$ be positive integers such that $m \geq n$.
Let $A$ be an $m \times n$ integer matrices.
Suppose that there exists an $n \times m$ integer matrix $B$ such that $BA = E_n$.
Then there exists an integer matrix $A' \in GL(m, \mathbb Z)$ whose first $n$ columns coincide with $A$.
\end{lem}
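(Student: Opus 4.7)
The plan is to apply the Smith normal form to $A$, use the left-inverse hypothesis to force the normal form to be trivial, and then extend $A$ by adding $m-n$ columns in the reduced setting. First, by the Smith normal form stated above, there exist $P \in GL(m, \mathbb{Z})$ and $Q \in GL(n, \mathbb{Z})$ with $PAQ = \begin{pmatrix} D & O \\ O & O \end{pmatrix}$, where $D$ is an $r \times r$ diagonal matrix with positive entries $\alpha_1 \mid \cdots \mid \alpha_r$. The core of the argument is to show $r = n$ and $D = E_n$.

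The key step exploits the hypothesis $BA = E_n$. Setting $B' := Q^{-1} B P^{-1}$, a direct computation gives $B' (PAQ) = Q^{-1} (BA) Q = E_n$. Decomposing $B' = (B_1 \mid B_2)$ with $B_1$ of size $n \times r$ and $B_2$ of size $n \times (m-r)$, block multiplication yields $B' \begin{pmatrix} D & O \\ O & O \end{pmatrix} = (B_1 D \mid O)$. Equating this with $E_n$ and looking at its last $n - r$ columns, which are the nonzero standard basis vectors $e_{r+1}, \ldots, e_n$, forces $r = n$. The remaining identity $B_1 D = E_n$ then forces $\det D = \pm 1$, and since the $\alpha_i$ are positive integers, each $\alpha_i = 1$, i.e., $D = E_n$.

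Finally, $PAQ = \begin{pmatrix} E_n \\ O \end{pmatrix}$ yields $A = P^{-1} \begin{pmatrix} Q^{-1} \\ O \end{pmatrix}$, with the zero block of size $(m-n) \times n$. Setting
\[
A' := P^{-1} \begin{pmatrix} Q^{-1} & O \\ O & E_{m-n} \end{pmatrix},
\]
one obtains a matrix in $GL(m, \mathbb{Z})$, as a product of matrices in $GL(m, \mathbb{Z})$, whose first $n$ columns coincide with $A$. The only nontrivial point is the reduction to $D = E_n$; once this is done, the construction of $A'$ is immediate by augmenting the reduced form with an identity block.
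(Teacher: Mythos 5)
Your proposal is correct and follows essentially the same route as the paper: reduce to the Smith normal form, use the left inverse $B$ (via a determinant/block argument) to force the normal form to be $\begin{pmatrix} E_n \\ O \end{pmatrix}$, and then augment $P^{-1}\begin{pmatrix} Q^{-1} & O \\ O & E_{m-n} \end{pmatrix}$ to get $A'$. The only cosmetic difference is that you conjugate $B$ to obtain $B'(PAQ)=E_n$ and read off $r=n$ and $\det D=\pm 1$ from the blocks, whereas the paper writes $Q=B'D$ and uses $\det Q=\pm 1$; these are the same argument.
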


\begin{proof}
We first show that the Smith normal form of $A$ is $\matba {E_n}O$.
Let
$$PAQ = \matba DO, \qquad D=\begin{pmatrix}
\alpha_1 &&&&& \\
& \ddots &&&O& \\
&& \alpha _r &&& \\
&&& 0 && \\
&O&&& \ddots & \\
&&&&& 0
\end{pmatrix}$$
be the Smith normal form of $A$, where $\alpha_i \neq 0$ for any $i$.
Since $BA = E_n$, we have
$$Q=BP^{-1}\matba DO.$$
Let $B'$ be the square submatrix of $BP^{-1}$ which consists of the first $n$ columns of $BP^{-1}$.
Then $Q=B'D$.
Since $Q \in GL(n, \mathbb Z)$, we have $\det D = 1$, which means that $r=n$ and $\alpha_1=\cdots=\alpha_n = 1$.
Hence the Smith normal form of $A$ is $\matba {E_n}O$.
Note that $A = P^{-1} \matba {E_n}O Q^{-1} = P^{-1} \matba {Q^{-1}}O$.

Let $A' := P^{-1} \matbb {Q^{-1}}OO{E_{m-n}}$.
Then $A' \in GL(m, \mathbb Z)$ and the first $n$ columns of $A'$ coincide with $P^{-1} \matba {Q^{-1}}O = A$.
\end{proof}

\begin{lem}
\label{lem:append2}
Let $m,n$ be positive integers such that $m \geq n$.
Let $A$ and $B$ be $m \times n$ integer matrices.
Suppose that there exist $n \times m$ integer matrices $C, D$ such that $CA = DB = E_n$.
Then there exists a matrix $T \in GL(m, \mathbb Z)$ such that $TA = B$.
\end{lem}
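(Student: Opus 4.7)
The plan is to reduce the lemma to the previous one (Lemma \ref{lem:append3}) by completing both $A$ and $B$ to square invertible matrices, and then comparing.

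First, I would apply Lemma \ref{lem:append3} to $A$: since $CA = E_n$, there exists a matrix $A' \in GL(m, \mathbb Z)$ whose first $n$ columns coincide with $A$. Likewise, applying Lemma \ref{lem:append3} to $B$ (using $DB = E_n$), there exists $B' \in GL(m, \mathbb Z)$ whose first $n$ columns coincide with $B$. Introduce the $m \times n$ selection matrix $J := \matba{E_n}{O}$, so that by construction $A = A' J$ and $B = B' J$.

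Next, I would define $T := B' (A')^{-1}$. This $T$ is a product of matrices in $GL(m, \mathbb Z)$, hence $T \in GL(m, \mathbb Z)$. To verify $TA = B$, I compute
\[
TA = B'(A')^{-1} A' J = B' J = B.
\]
This completes the proof.

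There is no real obstacle here: once Lemma \ref{lem:append3} is available, the argument is a one-line matrix manipulation. The only thing worth being careful about is ensuring both extensions $A'$ and $B'$ are produced with the same arrangement (namely, that the original columns appear as the \emph{first} $n$ columns), so that they share the common ``selector'' $J$ and the cancellation in $TA = B'(A')^{-1}A'J$ works cleanly.
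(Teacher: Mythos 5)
Your proposal is correct and is essentially the paper's own proof: both complete $A$ and $B$ to matrices $A', B' \in GL(m,\mathbb Z)$ via Lemma \ref{lem:append3} and set $T = B'(A')^{-1}$. Your use of the selector matrix $J$ just makes explicit the paper's step of ``comparing the first $n$ columns.''
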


\begin{proof}
By Lemma \ref{lem:append3}, there exists a matrix $A' \in GL(m, \mathbb Z)$ (resp. $B' \in GL(m, \mathbb Z)$) such that the first $n$ columns of $A'$ (resp. $B'$) coincide with $A$ (resp. $B$).
Let $T :=B'A'^{-1}$, then $TA' = B'$.
Comparing the first $n$ columns of both of the sides, we have $TA=B$.
\end{proof}

\begin{lem}
\label{lem:append1}
Let $A$ and $B$ be $m \times n$ integer matrices.
Assume that there exist integer matrices $M_{AB}$ and $M_{BA}$ such that $M_{AB}A = B$ and $M_{BA}B = A$.
Then there exists an integer matrix $T \in GL(m, \mathbb Z)$ such that $TA = B$.
\end{lem}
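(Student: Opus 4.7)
The plan is to put $A$ in Smith normal form and then use the two hypotheses to extract a one-sided integer inverse relation that lets us invoke Lemma~\ref{lem:append3}.

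First I would apply the Smith normal form to $A$: there exist $P \in GL(m, \mathbb Z)$ and $Q \in GL(n, \mathbb Z)$ with $PAQ = \begin{pmatrix} D & 0 \\ 0 & 0 \end{pmatrix}$, where $D$ is an $r \times r$ diagonal matrix with nonzero integer entries $\alpha_1, \ldots, \alpha_r$ ($r = \rank A$). Conjugating both $M_{AB}$ and $M_{BA}$ by $P$ and writing the results in block form $N := PM_{AB}P^{-1} = \begin{pmatrix} N_{11} & N_{12} \\ N_{21} & N_{22} \end{pmatrix}$ and $N' := PM_{BA}P^{-1} = \begin{pmatrix} N'_{11} & N'_{12} \\ N'_{21} & N'_{22} \end{pmatrix}$ (with $N_{11}, N'_{11}$ of size $r \times r$), the hypothesis $B = M_{AB}A$ becomes
$$PBQ = N \begin{pmatrix} D & 0 \\ 0 & 0 \end{pmatrix} = \begin{pmatrix} N_{11}D & 0 \\ N_{21}D & 0 \end{pmatrix}.$$

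The key step is to use the other relation $A = M_{BA}B$. Substituting the above into $PAQ = N' \cdot PBQ$ and comparing the top-left block gives $(N'_{11}N_{11} + N'_{12}N_{21})D = D$. Since $D$ is non-singular over $\mathbb Q$, this yields
$$\begin{pmatrix} N'_{11} & N'_{12} \end{pmatrix} \begin{pmatrix} N_{11} \\ N_{21} \end{pmatrix} = E_r,$$
so the $m \times r$ integer matrix $\begin{pmatrix} N_{11} \\ N_{21} \end{pmatrix}$ admits an integer left inverse.

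By Lemma~\ref{lem:append3}, this $m \times r$ matrix extends to a matrix $\tilde T \in GL(m, \mathbb Z)$ whose first $r$ columns coincide with $\begin{pmatrix} N_{11} \\ N_{21} \end{pmatrix}$. A column-by-column comparison (using that the $i$-th column of $PAQ$ is $\alpha_i$ times the $i$-th standard basis vector for $i \leq r$, and zero for $i > r$) shows $\tilde T \cdot PAQ = PBQ$. Setting $T := P^{-1} \tilde T P$ gives $T \in GL(m, \mathbb Z)$ with $TA = P^{-1}(\tilde T \cdot PAQ)Q^{-1} = P^{-1}(PBQ)Q^{-1} = B$, as desired. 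The main technical point is recognizing that the two opposing hypotheses combine, through the Smith form, into exactly the \emph{one-sided} integer inverse required by Lemma~\ref{lem:append3}; once this is observed, the rest is routine block manipulation.
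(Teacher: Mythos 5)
Your proof is correct, but it takes a genuinely different route from the paper's. The paper argues structurally: it observes that the hypotheses force the row lattices $H_A, H_B \subset \mathbb Z^n$ to coincide, picks a matrix $C$ whose rows are a basis of that common lattice, writes $A = M_{CA}C$ and $B = M_{CB}C$, shows that $M_{CA}$ and $M_{CB}$ each admit integer left inverses (using that the rows of $C$ are linearly independent over $\mathbb R$), and then invokes Lemma \ref{lem:append2} to produce $T \in GL(m,\mathbb Z)$ with $TM_{CA} = M_{CB}$, whence $TA = B$. You instead apply the Smith normal form directly to $A$, conjugate $M_{AB}$ and $M_{BA}$ by $P$, and extract from the block computation the single relation $\bigl(\begin{smallmatrix} N'_{11} & N'_{12} \end{smallmatrix}\bigr)\bigl(\begin{smallmatrix} N_{11} \\ N_{21} \end{smallmatrix}\bigr) = E_r$ (the cancellation of $D$ is legitimate since $D$ is nonsingular over $\mathbb Q$ and both sides are integral), which feeds straight into Lemma \ref{lem:append3}; the column-by-column verification that $\tilde T \cdot PAQ = PBQ$ and the conjugation $T = P^{-1}\tilde T P$ are both sound. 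Your version bypasses Lemma \ref{lem:append2} and the row-lattice bookkeeping entirely, at the cost of an explicit block calculation; the paper's version isolates the conceptual invariant (the shared row lattice) and reuses its intermediate lemma, but ultimately both arguments rest on the same Smith-normal-form extension result. The only degenerate case in your write-up is $r = 0$ (i.e.\ $A = 0$, forcing $B = 0$ and $T = E_m$), which is trivial and equally implicit in the paper's proof.
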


\begin{proof}
Let $H_A$ (resp. $H_B$) be the subgroup of $\mathbb Z^n$ generated by the rows of $A$ (resp. $B$).
The equality $M_{AB}A = B$ means $H_A \supset H_B$.
Hence we have $H_A = H_B$.
Let $C$ be a matrix such that the set of its rows forms a basis of $H_A$.
Thus there exist integer matrices $M_{AC}$ and $M_{CA}$ such that $M_{AC}A = C$ and $M_{CA}C = A$.
Also, there exist integer matrices $M_{BC}$ and $M_{CB}$ such that $M_{BC}B = C$ and $M_{CB}C = B$.
Then $M_{BC}M_{AB}M_{CA}C = C$.
Since the rows of $C$ are linearly independent on $\mathbb R$, we have $M_{BC}M_{AB}M_{CA} = E_r$, where $r$ is the number of rows of $C$, i.e., the rank of $H_A$.
Similarly, we have $M_{AC}M_{BA}M_{CB} = E_r$.
Note that both $M_{CA}$ and $M_{CB}$ are $m \times r$ matrices and $r = \rank H_A \leq m$.
Hence, by Lemma \ref{lem:append2}, there exists $T \in GL(m, \mathbb Z)$ such that $TM_{CA} = M_{CB}$.
Therefore $TA = TM_{CA}C = M_{CB}C = B$.
\end{proof}

\noindent
Department of Mathematical sciences\\
Tokyo Metropolitan University\\
1-1 Minami-Ohsawa, Hachioji-shi \\
Tokyo, 192-0397, Japan\\
E-mail: ito-t@tmu.ac.jp

\end{document}